\begin{document}
	\bibliographystyle{ieeetr}
	
	\newtheorem{Theorem}{\textbf{Theorem}}
	\newtheorem{Lemma}{\textbf{Lemma}}
	\newtheorem{Definition}{\textbf{Definition}}
	\newtheorem{Remark}{\textbf{Remark}}
	\newtheorem{Assumption}{\textbf{Assumption}}
	\newtheorem{Corollary}{\textbf{Corollary}}
	\newtheorem{Proof}{Proof}
	\newtheorem{Example}{\textbf{Example}}
	
	\makeatletter
	\renewcommand*\env@matrix[1][\arraystretch]{%
		\edef\arraystretch{#1}%
		\hskip -\arraycolsep
		\let\@ifnextchar\new@ifnextchar
		\array{*\c@MaxMatrixCols c}}
	\makeatother

	\title{Distributed Adaptive Gradient Algorithm with Gradient Tracking for Stochastic Non-Convex Optimization}
	
	\author{Dongyu~Han,
		Kun~Liu,~\IEEEmembership{Senior~Member,~IEEE,}
		Yeming~Lin,
		Yuanqing~Xia,~\IEEEmembership{Fellow,~IEEE}
		\thanks{This work was supported in part by the National Natural Science Foundation of China under Grants 62273041 and 61873034, in part by the Joint Open Foundation of the State Key Laboratory of Synthetical Automation for Process Industries under Grant 2021-KF-21-05, in part by the Graduate Research and Innovation Training Foundation of Beijing Institute of Technology under Grant 2023YCXY034. \emph{(Corresponding author: Kun Liu.)}}
		\thanks{Dongyu~Han, Kun~Liu, Yeming~Lin and Yuanqing~Xia are with the School of Automation,
			Beijing Institute of Technology, Beijing 100081, China (e-mails: handongyu@bit.edu.cn; kunliubit@bit.edu.cn;  yeminglin@bit.edu.cn; xia\_yuanqing@bit.edu.cn).}}
	
	\markboth{}%
	{Shell \MakeLowercase{\textit{et al.}}: Bare Demo of IEEEtran.cls for IEEE Journals}
	
	\maketitle
	
	\begin{abstract}
		This paper considers a distributed stochastic non-convex optimization problem, where the nodes in a network cooperatively minimize a sum of $L$-smooth local cost functions with sparse gradients. 
		By adaptively adjusting the stepsizes according to the historical (possibly sparse) gradients, a distributed adaptive gradient algorithm is proposed, in which a gradient tracking estimator is used to handle the heterogeneity between different local cost functions. 
		We establish an upper bound on the optimality gap, which indicates that our proposed algorithm can reach a first-order stationary solution dependent on the upper bound on the variance of the stochastic gradients. 
		Finally, numerical examples are presented to illustrate the effectiveness of the algorithm.
	\end{abstract}
	
	\begin{IEEEkeywords}
		Distributed non-convex optimization, stochastic gradient, adaptive gradient algorithm, gradient tracking.
	\end{IEEEkeywords}
	
	\IEEEpeerreviewmaketitle

	\section{Introduction}
\IEEEPARstart{W}{ITH} the rapid development of big data, distributed optimization has raised interest in the fields of signal processing, machine learning and robot networks~\cite{patterson2014distributed,deng2009imagenet,jiang2022fully} due to its advantages in high computation and communication efficiency as well as the robustness to network uncertainty~\cite{sun2020improving}. 
In distributed optimization, the computation nodes usually need to communicate with each other to minimize a finite-sum cost function, where each node only has access to partial knowledge of the entire network. 

According to the convexity of local cost functions and constraints, most studies on distributed optimization can be classified into two categories: distributed convex optimization and distributed non-convex optimization. 
Distributed convex optimization has been widely investigated, and various algorithms have been developed over the past decade, e.g., distributed gradient descent~\cite{nedic2009distributed,nedic2014distributed,zhang2019asyspa,lu2020privacy}, distributed primal-dual gradient~\cite{zhu2011distributed,li2020distributed}, distributed dual averaging~\cite{duchi2011dual,han2021privacy}, distributed mirror descent~\cite{yuan2020distributed,yi2020distributedmirror} and distributed proximal gradient algorithms~\cite{shi2015proximal,dixit2020online}.
On the other hand, non-convex optimization problems also widely exist in practical scenarios. 
For example, in deep neural networks, the cost function is usually non-convex due to the interaction of multiple hidden layers with nonlinear activation functions~\cite{cui2020multicomposite}, and in robotic networks, the physical constraints of robots sometimes are highly non-convex\cite{breitenmoser2010voronoi}. 
Compared to convex optimization problems, the algorithm design and analysis of non-convex optimization problems are relatively complex because of the absence of good properties of convexity~\cite{jiang2021distributed}. 
Therefore, it is also of great necessity to investigate algorithms for distributed non-convex optimization problems.	
The distributed gradient descent algorithm for non-convex optimization problem was investigated in~\cite{tatarenko2017non}, where the cost functions are assumed to be $L$-smooth. In~\cite{assran2019stochastic}, a Distributed Stochastic Gradient Descent (DSGD) algorithm, in which the iterative solution can achieve a local minimum called $\epsilon$-accurate stationary solution, was further developed for stochastic non-convex optimization problems, while the heterogeneity between local cost functions is not considered.
It was investigated in~\cite{koloskova2020unified} and~\cite{lian18Asynchronous} that the heterogeneity over the network could affect the stationarity performance of the DSGD algorithm for non-convex optimization problems through an additional bias term on the optimality gap, and the iterative solution may suffer from a consensus error~\cite{alghunaim2021unified}. 

To deal with the heterogeneity, a distributed Gradient Tracking (GT) algorithm was proposed in~\cite{tziotis2020second} for non-convex optimization problems over balanced networks. In distributed GT algorithm, an additional GT vector is introduced to estimate the global gradient of the whole cost function, which helps to find a stationary solution under the impact of heterogeneity. 
A stochastic variant of distributed GT algorithm was developed in~\cite{zhang2019decentralized} for the distributed stochastic empirical risk minimization problem with non-convex and $L$-smooth cost functions.
Furthermore, a distributed derivative-free GT algorithm was proposed in~\cite{tang2020distributed}, where the zero-order stochastic GT estimator is used in the iterations. 
In~\cite{kungurtsev2021decentralized}, the distributed stochastic GT algorithm was also extended to non-convex optimization problems over unbalanced networks. 
More recently, several variance reduction techniques were employed in distributed GT algorithm to reduce the variances of the stochastic gradient estimators~\cite{xin2020variance,jiang2021distributed,sun2020improving}. 
Note that the above algorithms involve the use of pre-designed stepsizes in the iterations.

On the other hand, the data in real-world applications is usually complicated, which may result in the sparse gradients. 
When dealing with such sparsity, the aforementioned algorithms with pre-designed stepsizes may suffer from slow convergence with bad-chosen hyper-parameters, and show poor performance since the gradient is scaled uniformly in all dimensions~\cite{chen2021cada,zhang2021distributed}. 
To overcome this problem, adaptive gradient algorithms~\cite{duchi2011adaptive,zeiler2012adadelta,reddi2019convergence,kingma2014adam,chen2018convergence} have received increasing attention since less tweaking of hyper-parameters are required to achieve satisfactory performance even if the gradients are sparse.

Moreover, a distributed adaptive gradient algorithm based on momentum gradient decent was presented in~\cite{shen2020distributed} under the convex setting. In~\cite{shen2020distributed}, each dimension of the gradient is rescaled by adjusting the stepsizes based on the historical gradients, which leads to good performance on problems with sparse gradients. 
A distributed adaptive gradient algorithm with bounded stepsizes was further studied in~\cite{zhang2021distributed} to improve the generalization capacity. 
By introducing a GT estimator, a novel distributed adaptive algorithm was developed in the notable work~\cite{carnevale2020distributed}, which is proved to achieve a linear convergence rate under the strongly-convex setting. 

Motivated by the above discussion, this paper investigates a distributed adaptive gradient algorithm for addressing distributed stochastic non-convex optimization problems with $L$-smooth cost functions. 
The main contributions are as follows:

\begin{itemize}
	\item[(a)] We propose a GT-based distributed adaptive gradient algorithm, in which each node performs a momentum gradient descent based on the adaptive stepsizes to find a stationary solution. 
	The adaptive stepsizes are generated according to the historical gradients, enabling the algorithm to automatically coordinate the stepsizes among dimensions when the gradients are sparse. 
	Inspired by\cite{carnevale2020distributed}, we utilize a GT estimator to aggregate the gradients over the network. 
	Moreover, an clipping operator is used to mitigate the negative effects of extreme adaptive stepsizes.		
	\item[(b)] We provide a rigorous stationarity analysis for our proposed algorithm under the non-convex setting. 
	We find that the GT estimator plays a crucial role in handling the heterogeneity of different local cost functions, since it can not only aggregate the directions of momentum gradient descent but also mitigate the disagreement of adaptive stepsizes between different nodes, as shown in~Lemmas~\ref{lm_mcon} and~\ref{lm_v_con}, respectively. 
	These characteristics enable our algorithm to find a stationarity solution of the distributed stochastic non-convex optimization problem.
	\item[(c)]  It is shown that the upper bound on the optimality gap is of the order $O(1/T + \sigma^2)$ with~$T$ the iteration number and~$\sigma^2$ an upper bound on the variance of the stochastic gradients, which aligns with the one observed in centralized adaptive gradient algorithm~\cite{zaheer2018adaptive} for stochastic non-convex optimization problems. 		
\end{itemize}

\textbf{Notations}: Let $\mathbb{R}$ and $\mathbb{R}^+$ denote the set of real number and positive real number, respectively.	
Denote $\mathbb{R}^n$ as the $n$-dimensional real column vector space, and let $\mathbb{R}^{m \times n}$ be the set of $m \!\times \! n$-dimensional real matrix. 
Denote the inner product of two real vectors $x,y \in\mathbb{R}^d$ by $\langle x,y \rangle$. 
The notation~$[x]_{i}$ stands for the $i$-th entry of vector $x$. 
For a matrix $A$, we use $A'$ to denote its transpose and use $[A]_{ij}$ to denote its $i,j$-th entry. 
The notations $\|A\|$ and $\|A\|_F$ denote the spectral norm and the Frobenius norm of $A$, respectively.
Denote the spectral radius of matrix $A\in \mathbb{R}^{n\times n}$ as $\rho(A)$. 
We use ${\rm diag}\{ a_1, \dots, a_n\}$ to denote the diagonal matrix that consists of the scalars $a_1, \dots, a_n$, and the block diagonal matrix ${\rm blk~ diag}\{ A_1, \dots, A_n\}$ is defined in a similar way with $A_1,\dots,A_n$ some real matrices. 
The notations $\max\{\cdot\}$ and $\min\{\cdot\}$ denote the maximum and the minimum element in~$\{\cdot\}$, respectively. 
The column vectors of all ones and zeros with size $n$ are denoted by $\bm{1}_n$ and $\bm{0}_n$, respectively. 
The~$n \!\times\! n$ dimensional identity matrix is denoted by $I_n$. 	
The Hardmard product and the Kronecker product is represented as~`$\odot$' and~`$\otimes$', respectively. 
Let $\mathbb{E}[\cdot]$ denote the expectation of random variables. 
We use $O(\cdot)$ to describe the limiting behavior of a function, e.g., for functions $f: \mathbb{R}^+\rightarrow \mathbb{R}$ and $g: \mathbb{R}^+ \rightarrow \mathbb{R}^+$, we say $f(t) \leq {O}(g(t))$ if there exist positive real numbers $M$ and $t_0$ such that $|f(t)| \leq M g(t) $ for all~$t \geq t_0$.

\section{Problem formulation}
Consider a non-convex optimization problem over a distributed network with $n$ nodes. The nodes can communicate over an undirected network $\mathcal{G} = (\mathcal{V}, \mathcal{E},A)$, where~$\mathcal{V} = \{1,2,\dots,n\}$ and $\mathcal{E}$ are the node and the edge sets, respectively, and $A = [A_{ij}] \in \mathbb{R}^{n\times n}$ represents the weighted adjacency matrix. The nodes in $\mathcal{V}$ aim to solve the following consensus-based distributed optimization problem
\begin{equation} \label{problem}
	\begin{split}
		\min_{\{x_i\}_{i=1}^n} \;\; & \frac{1}{n}\sum_{i=1}^{n} f_i(x_i) \\
		\text{s.t.} \;\; &x_i = x_j, \;\; \forall i,j \in \mathcal{V} 
	\end{split}
\end{equation}
with 
\begin{equation}\label{problem2}
	f_i(x_i) \triangleq \mathbb {E}_{\xi_i} [F_i (x_i,\xi_i)],
\end{equation}
where  $x_i\in \mathbb{R}^d$ is the decision variable of node $i$, $f_i: \mathbb{R}^d \rightarrow \mathbb{R}$ is the local cost function of node $i$ that is dependent on random variable $\xi_i$ and private (possibly non-convex) function~$F_i$. 
Node $i$ can evaluate the stochastic (possibly sparse) gradient~$ \nabla F_i (x_i,\xi_i)$ at the point~$x$ by randomly sampling~$\xi_i$ from a local distribution. 
Note that the functions $F_i$, for all~$i\in\mathcal{V}$, are allowed to be different among the nodes, and the random variables~$\xi_i$, for all~$i\in\mathcal{V}$, may be sampled from different distributions, which leads to the heterogeneity of local cost functions. 
We also denote $f(x) = \frac{1}{n} \sum_{i=1}^{n} f_i(x)$ as the average cost function over the network.

Then, we make the some assumptions on the above problem.

\begin{Assumption}\label{ass_smooth}
	For $i\in \mathcal{V}$, the cost function $f_i$ is differentiable and $L$-smooth for some positive scalar~$L$, i.e., $\|\nabla f_i(x) - \nabla f_i(y)\| \leq L \|x-y\|$, which is equivalent to $f_i(y)  \leq f_i(x) - \langle \nabla f_i(x) , x - y\rangle + \frac{L}{2} \|x - y\|^2$, holds for any $x,y\in \mathbb{R}^d$.	
\end{Assumption}

\begin{Assumption} \label{assumption_sto_grad} The stochastic gradient $\nabla F_i(x_i,\xi_i)$, $i\in \mathcal{V}$, satisfies the following conditions:
	\begin{itemize}
		\item[(a)] $\nabla F_i(x_i,\xi_i)$ is an unbiased estimate of the true gradient, i.e., $\mathbb {E}_{\xi_i} [\nabla F_i(x_i,\xi_i)] = \nabla f_i(x_i)$.
		\item[(b)] There exists a scalar $G>0$ such that $\|\nabla F_i(x_i,\xi_i)\| \leq G$ almost surely holds for any $x_i\in\mathbb{R}^d$ and $\xi_i$.	
		\item[(c)] The variance of the stochastic gradient is bounded, i.e., there exists a scalar $\sigma>0$ such that $\mathbb{E}_{\xi_i} \|\nabla F_i(x_i,\xi_i) - \nabla f_i(x_i)\|^2\leq \sigma^2$.
	\end{itemize}
	
\end{Assumption}
\begin{Remark}
	Assumption~\ref{assumption_sto_grad}(b) is commonly used in the works on adaptive gradient algorithms~\cite{ward2019adagrad,kingma2014adam,reddi2019convergence,chen2018convergence,zhang2021distributed, pmlr-v89-li19c} for stochastic non-convex optimization problems. 
	This assumption implies that the gradient of the cost function is Lipschitz bounded, i.e., $\|\nabla f_i(x_i)\| \leq G$, which can be satisfied by a wide range of cost functions, e.g., the Huber function and Geman-McClure function in robust optimization~\cite{yang2020graduated} as well as the negative log-likelihood function for logistic regression~\cite{kingma2014adam}. 
	On the other hand, Assumption~\ref{assumption_sto_grad}(b) also implies that the stochastic error between the stochastic gradient and the true gradient, i.e., $\|\nabla F_i(x_i,\xi_i) - \nabla f_i(x_i)\|$, is bounded. 
	This type of bounded stochastic error or noises, including the truncated Gaussian noise and the bounded uniform noise, commonly arises in various applications. 	
	For instance, in engineering applications, sensors often have limited measuring ranges, which leads to bounded noises.		 
	Moreover, in signal processing and data analysis tasks, extremely large noisy data is commonly considered as anomalous signal during the pre-processing, and then, is excluded from further computation. 
	As a result, these scenarios naturally lead to the almost surely bounded gradient in Assumption~\ref{assumption_sto_grad}(b).
\end{Remark}

Since node $i$ only has information about $F_i$ and $\xi_i$, all the nodes need to exchange information over the network in order to solve problem~(\ref{problem}). 
The node $j$ can receive information from node~$i$ if edge $(i,j)\in \mathcal{E}$. Denote $\mathcal{N}_i = \{j|(i,j)\in \mathcal{E}\}$ as the neighbor set of node $i$. Then, a basic assumption on the graph is given as follows:
\begin{Assumption} \label{ass_graph} The graph $\mathcal{G}$ is connected. Moreover, the weighted adjacency matrix $A$ is doubly stochastic and satisfies $\rho \big(A - \frac{\bm{1}_n \bm{1}_n'}{n}\big) < 1$.
\end{Assumption}
\begin{Remark}
	Assumption~\ref{ass_graph} can be satisfied in connected undirected networks through properly designed weight setting protocols, such as Metropolis weight protocol~\cite{nedic2017achieving}.
\end{Remark}

In our considered distributed stochastic non-convex optimization problem, since each node maintains a local solution~$x_i$, we employ the following metric:
\begin{equation}\label{gap}
	\mathbb {E} \left[ \|\nabla f(\bar x) \|^2 + \frac{1}{n} \sum_{i=1}^{n} \|x_{i} - \bar{x} \|^2 \right]
\end{equation}
with $\bar{x} = \frac{1}{n} \sum_{i=1}^{n} x_i$, to evaluate the first-order stationarity of the solutions $\{x_i\}_{i=1}^n$ of all the $n$ nodes over the network.
The metric~(\ref{gap}), also known as optimality gap~\cite{sun2020improving}, captures both the gradient norm and the consensus error of the solutions, providing a comprehensive evaluation of the stationarity performance.

\section{Algorithm Design}
In this section, we present our proposed GT-based distributed adaptive gradient algorithm, as shown in Algorithm~\ref{alg}. 

At $t$-th iteration, node~$i$ maintains a \emph{local estimate}~$x_{t,i}\in \mathbb{R}^d$ of the solution of the distributed optimization problem~(\ref{problem}) and can evaluate the stochastic gradient~$g_{t,i} \triangleq \nabla F_i(x_{t,i},\xi_i)$. 
The vector~$m_{t,i}\in \mathbb{R}^d$ denotes the \emph{momentum gradient}. 
The vector~$s_{t,i}\in \mathbb{R}^d$ represents the \emph{GT estimator}, which is designed to track the gradient~$\nabla f(x)$ of the global cost function. 
The notation~$v_{t,i} \in \mathbb{R}^d$ represents the \emph{adaptive vector} to rescale the stepsizes, while the vector~$\hat{v}_{t,i} \in \mathbb{R}^d$ and the matrix~$V_{t,i} \in \mathbb{R}^{d\times d}$ are auxiliary adaptive variables. 
In addition, the scalars~$\beta_1 \in (0,1)$ and $\beta_2\in (0,1)$ denote the exponential decay rates of $m_{t,i}$ and $v_{t,i}$, respectively, and~$\alpha>0$ is the stepsize. 

\begin{algorithm}
	\caption{GT-based distributed adaptive gradient algorithm}  
	\label{alg}  
	\begin{algorithmic}[1]
		\STATE {\bf Initialization:} Parameters $\beta_1\in (0,1),\beta_2\in (0,1)$, initial stepsize $\alpha > 0$, $v_{\rm max} \geq 1 \geq  v_{\rm min}>0$, iteration number~$T$, and the initial states
		$\forall x_{1,i} \in \mathbb{R}^d$, $s_{1,i} = g_{1,i}$, $m_{1,i} = \bm{0}_d$ and $v_{1,i} = s_{1,i} \odot s_{1,i}$. 
		\FOR {$t=1,\dots,T$, node $i\in \mathcal{V}$}
		\STATE Communicate the \emph{local estimate} $x_{t-1,i}$ and the \emph{GT estimator}~$s_{t-1,i}$ with its neighbors.\\
		\STATE Update
		\begin{align}
			m_{t+1,i} &=  \beta_1 m_{t,i} + (1-\beta_1) s_{t,i}, \label{m_update}\\
			\hat{v}_{t+1,i} &=  \beta_2 v_{t,i} + (1 - \beta_2) s_{t,i} \odot s_{t,i}, \label{v_update}\\
			{v}_{t+1,i} &=  {\rm Clip} (\hat{v}_{t+1,i}, v_{\rm min}, v_{\rm max}), \label{v3_update}\\
			V_{t+1,i} &=  {\rm diag} \{[v_{t+1,i}]_1, \dots, [v_{t+1,i}]_d\}, \label{v2_update}\\
			x_{t+1,i} &=  \sum_{j=1}^{n} A_{ij} x_{t,j} - \alpha V_{t+1,i}^{-1/2} m_{t+1,i}, \label{x_update}\\
			s_{t+1,i} &=  \sum_{j=1}^{n} A_{ij} s_{t,j} + g_{t+1,i} - g_{t,i}, \label{s_update}
		\end{align}
		where the stochastic gradient $g_{t,i} = \nabla F_i(x_{t,i},\xi_i)$.
		\ENDFOR
		\STATE {\bf Output}: The \emph{local estimate} $x_{T+1,i}$
	\end{algorithmic}
\end{algorithm}

As shown in (\ref{m_update})-(\ref{v2_update}), at $t$-th iteration, node $i$ first performs an exponential moving averaging to update the \emph{momentum gradient} $m_{t,i}$ as well as the \emph{adaptive vector} $v_{t,i}$ based on the \emph{GT estimator} $s_{t-1,i}$.  
Then, node $i$ gathers information from its neighbors $j\in \mathcal{N}_i$ and updates the \emph{local estimate} $x_{t,i}$ as in~(\ref{x_update}) based on a distributed momentum gradient descent. 
In~(\ref{x_update}) the descent direction is determined by the \emph{momentum gradient}~$m_{t,i}$, and the stepsizes in different dimensions are rescaled by matrix $\alpha V_{t,i}^{-1/2}$ to handle sparse gradients. 
Specifically, the stepsize is enlarged adaptively when it is in the dimension with a small gradient, and vice versa. 
To avoid extreme values of the adaptive stepsizes, an element-wise clipping operation 
\begin{equation}
	{\rm Clip} (v, v_{\rm min}, v_{\rm max}) \triangleq \max \{\min\{v,v_{\rm max}\},v_{\rm min} \}
\end{equation}
with $v_{\rm max}$ and $v_{\rm min}$ the upper and lower bound of $v_{t,i}$, respectively, is equipped to bound the value of $\hat{v}_{t,i}$. 
In addition, as in (\ref{s_update}), the update of the \emph{GT estimator}~$s_{t,i}$ involves an accumulation of the gradient innovations, i.e., $g_{t,i}-g_{t-1,i}$, as well as a weighted average consensus operation to track the gradient $\nabla f(x)$ of the global cost function. 	
Finally, node~$i$ outputs the \emph{local estimate} $x_{T+1,i}$ of the solution. 

\begin{Remark}
	Compared to the GT-based distributed adaptive gradient algorithm for a strongly-convex problem in~\cite{carnevale2020distributed}, our work aims to investigate the stationarity performance of the proposed Algorithm~\ref{alg} for a more challenging distributed stochastic non-convex optimization problem. 
	A notable characteristic of Algorithm~\ref{alg} is the utilization of a clipping operator as in~(\ref{v3_update}), which distinguishes our algorithm from the algorithm in~\cite{carnevale2020distributed} with an additive bias term. 
	Specifically, in~\cite{carnevale2020distributed}, an easily implemented additive bias term $\epsilon$ is employed on each~$v_{t,i}$ to ensure that the  adaptive vector $v_{t,i} + \epsilon$ is consistently greater than or equal to the constant~$\epsilon$. 
	In contrast, our clipping operator provides a more direct approach to mitigate the negative impact of extreme stepsizes, since the adaptive vector will be clipped only if $v_{t,i}$ is smaller or larger than the threshold $v_{\min}$ or $v_{\max}$, respectively. 
	Furthermore, we provide a rigorous stationarity analysis for our Algorithm~\ref{alg}, which establishes an explicit upper bound on the optimality gap, as demonstrated in  Corollary~\ref{Col_1} below. 
	It is important to note that our analysis approach can also be extended and applied to the algorithm in~\cite{carnevale2020distributed} by adjusting the coefficients~$v_{\min}, v_{\max}$ in our analysis according to the value of additive bias term~$\epsilon$ in~\cite{carnevale2020distributed}.  
\end{Remark}
	
	\section{Convergence Analysis}
	In this section, we provide the convergence analysis of our proposed algorithm for non-convex optimization problem in terms of the \emph{optimality gap} given by~(\ref{gap}). 
	
	We first introduce several auxiliary vectors
	\begin{equation}\label{z_def}
		z_{t,i} \triangleq 
		\begin{cases}
			x_{t,i} , &t = 1, \\
			\frac{1}{1-\beta_1} x_{t,i} - \frac{\beta_1}{1-\beta_1} x_{t-1,i}, &t \geq 2.
		\end{cases}
	\end{equation}	
	
	The notation
	\begin{equation}\label{bold_def}
		\bm{\zeta}_t \triangleq [\zeta_{t,1}',\dots,\zeta_{t,n}']' \in \mathbb{R}^{nd}
	\end{equation}
	aggregates all the vectors $\zeta_{t,i}$ of the nodes $i \in \{1, \dots, n\}$, while the notations
	\begin{equation}\label{bar_def}
		\begin{split}
			&\bar{\zeta}_t \triangleq \frac{1}{n} \sum_{i=1}^{n} \zeta_{t,i} \in \mathbb{R}^{d}, {\rm~and~} \tilde{\bm{\zeta}}_t \triangleq \bm{1}_n \otimes \bar{\zeta}_t \in \mathbb{R}^{nd}
		\end{split}
	\end{equation}
	represent the average vectors, where $\zeta$ can be any variables such that $\zeta\in \{x,s,z,m,v\}$. 
	Moreover, define~$V_t \triangleq {\rm blk \; diag} \{V_{t,1},\dots,V_{t,n}\} \in \mathbb{R}^{nd\times nd}$ as the aggregation form of the adaptive matrix $V_{t,i}$, and $\bar V_t \triangleq \frac{1}{n}\sum_{i=1}^{n} V_{t,i}$. 
	Let~$\eta_{t,i} =  \nabla F_i(x_{t,i},\xi_i) - \nabla f_i(x_{t,i})$ represents the error between the stochastic gradient $\nabla F_i(x_{t,i},\xi_i)$ and the true gradient~$\nabla f_i(x_{t,i})$.
	
	With auxiliary vector $\bar{z}_t$ defined in~(\ref{bar_def}), in the following lemma we establish an upper bound on the evolution of $f(\bar{z}_{t})$ based on the $L$-smooth property of cost function $f_i$.
	\begin{Lemma}\label{lm_smooth}
		Under Assumptions~\ref{ass_smooth} and \ref{assumption_sto_grad}(b), we have the following result:
		\begin{equation}\label{formula_lm_smooth}
			\begin{split}
				&f(\bar{z}_{t+1}) \!\leq\! f(\bar{z}_{t}) \!-\! \alpha \Big(\! v_{\rm max} ^{\!-\!1/2} \!-\! \frac{\alpha v_{\rm min}^{\!- \!1/2}}{2} \!-\! \alpha v_{\rm min}^{-1} (L\!+\! 1) \Big) \! \|\nabla f(\bar{x}_t)\|^2 \\
				& \quad
				+ M_1 \| {\bm{x}}_t - \tilde{\bm{x}}_t \|^2  + M_2 \| \bm{s}_{t} - \tilde{\bm{s}}_t \|^2 + M_3 \|  \bm{m}_t - \tilde{\bm{m}}_t \|^2 \\
				& \quad+ M_4 \| \tilde{\bm{m}}_t \|^2 + M_5 \| \bm{v}_{t} - \tilde{\bm{v}}_t\|^2 + M_6 \sum_{i=1}^{n} \| \eta_{t,i}\|^2
			\end{split}
		\end{equation}
		with
		\begin{align}
			M_1 & = \frac{4 v_{\rm min}^{-1} L}{n}\Big( v_{\rm max}^{1/2} + 2 \alpha^2 (L+1) \Big)  , \nonumber\\
			M_2 & = \frac{18 v_{\rm min}^{-1}}{n} \Big( v_{\rm max}^{1/2} + 2 \alpha^2 (L+1) \Big) , \nonumber\\
			M_3 & = M_4 =  \frac{\beta_1^2 v_{\min}^{-1}}{n(1-\beta_1)^2} \Big( 16 v_{\rm max}^{1/2} + 32 \alpha^2 (L+1) + \alpha^2 L^2\Big), \nonumber \\
			M_5 & = \frac{G^2 v_{\rm min}^{-3} }{n}\Big( v_{\rm max}^{1/2} + 2 \alpha^2 (L+1) \Big) ,\nonumber\\
			M_6 & = \frac{4 v_{\rm min}^{-1}}{n}\Big( v_{\rm max}^{1/2} + 2 \alpha^2 (L+1) \Big) . \label{M_bound}
		\end{align}
	\end{Lemma}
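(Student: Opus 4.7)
The plan is to start from the $L$-smoothness of $f$ applied at $\bar z_t$ and $\bar z_{t+1}$, which gives
$f(\bar z_{t+1}) \leq f(\bar z_t) + \langle \nabla f(\bar z_t), \bar z_{t+1} - \bar z_t\rangle + \tfrac{L}{2}\|\bar z_{t+1}-\bar z_t\|^2$.
The first job is to derive a clean expression for $\bar z_{t+1}-\bar z_t$. Averaging \eqref{x_update} and using the doubly-stochasticity of $A$ gives $\bar x_{t+1} = \bar x_t - \frac{\alpha}{n}\sum_i V_{t+1,i}^{-1/2}m_{t+1,i}$; substituting into \eqref{z_def} and unfolding the momentum recursion \eqref{m_update} yields
$\bar z_{t+1}-\bar z_t = -\frac{\alpha}{n}\sum_i V_{t+1,i}^{-1/2} s_{t,i} - \frac{\alpha\beta_1}{n(1-\beta_1)}\sum_i (V_{t+1,i}^{-1/2}-V_{t,i}^{-1/2}) m_{t,i}.$
I would also replace $\nabla f(\bar z_t)$ by $\nabla f(\bar x_t)$ in the inner product via $L$-smoothness, using the easy bound $\|\bar z_t - \bar x_t\| \le \frac{\alpha\beta_1}{1-\beta_1} v_{\min}^{-1/2}\|\bar m_t\|$, which will feed the $\|\tilde{\bm m}_t\|^2$ term.

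To extract the main descent I would split the leading piece around its network average: $\frac{1}{n}\sum_i V_{t+1,i}^{-1/2} s_{t,i} = \frac{1}{n}\sum_i V_{t+1,i}^{-1/2}(s_{t,i}-\bar s_t) + \bigl(\frac{1}{n}\sum_i V_{t+1,i}^{-1/2}\bigr)\bar s_t$, then use the gradient-tracking identity $\bar s_t = \bar g_t = \nabla f(\bar x_t) + \bar\eta_t + \frac{1}{n}\sum_i\bigl(\nabla f_i(x_{t,i})-\nabla f_i(\bar x_t)\bigr)$, and exploit the clipping $[V_{t+1,i}^{-1/2}]_{kk}\ge v_{\max}^{-1/2}$. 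This produces the dominant term $-\alpha v_{\max}^{-1/2}\|\nabla f(\bar x_t)\|^2$. All cross terms are then handled by Young's inequality, tuned so the residuals on $\|\nabla f(\bar x_t)\|^2$ are absorbed into the coefficient $v_{\max}^{-1/2}-\tfrac{\alpha v_{\min}^{-1/2}}{2}-\alpha v_{\min}^{-1}(L+1)$ displayed in the lemma.

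What remains is to assign each Young residual to the correct consensus quantity: the $V_{t+1,i}^{-1/2}(s_{t,i}-\bar s_t)$ piece yields $\|\bm s_t - \tilde{\bm s}_t\|^2$ (contributing $M_2$); the $L$-smoothness step $\frac{1}{n}\sum_i[\nabla f_i(x_{t,i})-\nabla f_i(\bar x_t)]$ yields $\|\bm x_t-\tilde{\bm x}_t\|^2$ (contributing $M_1$); the stochastic error $\bar\eta_t$ yields $\sum_i\|\eta_{t,i}\|^2$ (contributing $M_6$); and the correction $\sum_i(V_{t+1,i}^{-1/2}-V_{t,i}^{-1/2})m_{t,i}$, after telescoping $V_{t+1,i}-V_{t,i}$ through the update \eqref{v_update}-\eqref{v3_update}, splits further around $\bar m_t$ to give $\|\bm m_t-\tilde{\bm m}_t\|^2$ and $\|\tilde{\bm m}_t\|^2$ (contributing $M_3,M_4$). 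The $\|\bm v_t-\tilde{\bm v}_t\|^2$ term (coefficient $M_5$) comes from replacing $\frac{1}{n}\sum_i V_{t+1,i}^{-1/2}$ by $\bar V_{t+1}^{-1/2}$ using the fact that $v\mapsto v^{-1/2}$ is Lipschitz on $[v_{\min},v_{\max}]$ with constant $\tfrac12 v_{\min}^{-3/2}$, which together with $\|s_{t,i}\|\le G$ (Assumption~\ref{assumption_sto_grad}(b)) explains the $G^2 v_{\min}^{-3}$ scaling. The quadratic $\tfrac{L}{2}\|\bar z_{t+1}-\bar z_t\|^2$ term is bounded by repeated $\|a+b\|^2\le 2\|a\|^2+2\|b\|^2$ and $\|V^{-1/2}\|\le v_{\min}^{-1/2}$; it is the source of every $\alpha^2(L+1)$ addend inside $M_1,\dots,M_6$.

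The main obstacle is the bookkeeping around the non-commuting block-diagonal matrices $V_{t+1,i}^{-1/2}$: because $\frac{1}{n}\sum_i V_{t+1,i}^{-1/2}\neq\bar V_{t+1}^{-1/2}$, every average-vs.-deviation split must invoke the Lipschitz property of $v\mapsto v^{-1/2}$ on the clipped interval, and the Young weights have to be chosen simultaneously so that (i) all residuals on $\|\nabla f(\bar x_t)\|^2$ collapse into the single descent coefficient above and (ii) the remaining coefficients exactly match the stated $M_1,\dots,M_6$ in \eqref{M_bound}. Once this bookkeeping is performed, \eqref{formula_lm_smooth} follows by collecting terms.
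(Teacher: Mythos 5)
Your proposal is correct and follows essentially the same route as the paper's proof: $L$-smoothness at $\bar z_t$, the decomposition of $\bar z_{t+1}-\bar z_t$ obtained from averaging the updates and unfolding the momentum recursion, the split of $\frac{1}{n}\sum_i V_{t+1,i}^{-1/2}s_{t,i}$ around the network average with the Lipschitz bound on $v\mapsto v^{-1/2}$ over $[v_{\min},v_{\max}]$, and the same assignment of Young residuals to $M_1,\dots,M_6$. The only slips are cosmetic: the $\bar z_t-\bar x_t$ term feeds both $\|\bm m_t-\tilde{\bm m}_t\|^2$ and $\|\tilde{\bm m}_t\|^2$ (which is why the $\alpha^2L^2$ addend appears in $M_3=M_4$), and the $G$-bound used for the $M_5$ term applies to $\|\bar s_t\|=\|\frac{1}{n}\sum_i g_{t,i}\|$ rather than to the individual $s_{t,i}$, which are not bounded by $G$ a priori.
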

	
	\begin{proof}
		The proof is given in Appendix B.
	\end{proof}
	
	Next, we focus on the following five error quantities on the right-hand side of (\ref{formula_lm_smooth}):
	\begin{itemize}
		\item[(i)] the consensus error $\|\bm{m}_{t} - \tilde{\bm{m}}_{t}\|^2$ of \emph{momentum gradient}; 
		\item[(ii)] the second-order moment $\|\tilde{\bm{m}}_{t}\|^2$ of average \emph{momentum gradient};
		\item[(iii)] the consensus error $\|\bm{x}_{t} - \tilde{\bm{x}}_{t}\|^2$ of \emph{local estimate};
		\item[(iv)] the consensus error $\|\bm{s}_{t} - \tilde{\bm{s}}_{t}\|^2$ of \emph{GT estimator}; 
		\item[(v)] the consensus error $\|\bm{v}_{t} - \tilde{\bm{v}}_{t}\|^2$ of \emph{adaptive vector}.
	\end{itemize}
	
	In the following Lemmas~\ref{lm_mcon}-\ref{lm_s_con}, we will establish the upper bounds on the expected summation over time horizon $T$ of the above five quantities respectively.

	\begin{Lemma}\label{lm_mcon} (Consensus error of momentum gradient) Consider the iterates generated by Algorithm~\ref{alg}. It holds for $T\geq 1$ that
		\begin{equation}\label{m_con2}
			\begin{split}
				& \sum_{t=1}^{T}\mathbb{E}[\|\bm{m}_{t} - \tilde{\bm{m}}_{t}\|^2] \leq 4 \sum_{t=1}^{T} \mathbb{E} [\| \bm{s}_{t} \!-\! \tilde{\bm{s}}_t\|^2].
			\end{split}
		\end{equation}
	\end{Lemma}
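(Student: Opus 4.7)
The plan is to exploit the linear recursion satisfied by the momentum vector. Writing the momentum update~(\ref{m_update}) in stacked form gives $\bm{m}_{t+1} = \beta_1 \bm{m}_t + (1-\beta_1)\bm{s}_t$. Averaging componentwise across the $n$ nodes and lifting back via $\bm{1}_n \otimes \,\bar{\cdot}\,$ yields the same recursion for the consensus part, $\tilde{\bm{m}}_{t+1} = \beta_1 \tilde{\bm{m}}_t + (1-\beta_1)\tilde{\bm{s}}_t$. Subtracting the two identities produces the clean recursion
\begin{equation*}
\bm{m}_{t+1} - \tilde{\bm{m}}_{t+1} = \beta_1 (\bm{m}_t - \tilde{\bm{m}}_t) + (1-\beta_1)(\bm{s}_t - \tilde{\bm{s}}_t),
\end{equation*}
so that the consensus error of $\bm{m}$ evolves as a convex combination driven by the consensus error of $\bm{s}$.

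Next, I would apply the convexity of $\|\cdot\|^2$ (Jensen's inequality with weights $\beta_1$ and $1-\beta_1$) to obtain
\begin{equation*}
\|\bm{m}_{t+1} - \tilde{\bm{m}}_{t+1}\|^2 \leq \beta_1 \|\bm{m}_t - \tilde{\bm{m}}_t\|^2 + (1-\beta_1)\|\bm{s}_t - \tilde{\bm{s}}_t\|^2,
\end{equation*}
and then sum over $t = 1, \dots, T$. Invoking the initialization $\bm{m}_{1,i} = \bm{0}_d$ (hence $\|\bm{m}_1 - \tilde{\bm{m}}_1\|^2 = 0$) and discarding the non-negative boundary term $\|\bm{m}_{T+1} - \tilde{\bm{m}}_{T+1}\|^2$ aligns the two summation ranges; rearranging and dividing by $1-\beta_1>0$ gives $\sum_{t=1}^{T}\|\bm{m}_t - \tilde{\bm{m}}_t\|^2 \leq \sum_{t=1}^{T}\|\bm{s}_t - \tilde{\bm{s}}_t\|^2$. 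Taking expectations then yields the claim, in fact with constant $1$ rather than $4$.

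There is no substantive obstacle: the argument is a one-line Lyapunov estimate on a contractive scalar recursion, and the only care needed is the index-shift bookkeeping when the sums on the two sides are realigned. The factor~$4$ in the statement is harmless slack one inherits if one instead uses the looser Young's inequality $\|\beta_1 a + (1-\beta_1) b\|^2 \leq 2\|a\|^2 + 2\|b\|^2$ together with a cruder discrete Grönwall step; either route establishes~(\ref{m_con2}). An alternative but equivalent derivation unrolls the recursion explicitly as $\bm{m}_{t+1} - \tilde{\bm{m}}_{t+1} = (1-\beta_1)\sum_{k=1}^{t}\beta_1^{t-k}(\bm{s}_k - \tilde{\bm{s}}_k)$, applies Cauchy--Schwarz against the geometric weights $\beta_1^{t-k}$ (which sum to at most $1/(1-\beta_1)$), and exchanges the order of summation to arrive at the same bound.
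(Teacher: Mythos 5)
Your proposal is correct and follows essentially the same route as the paper: both derive the one-step recursion $\bm{m}_{t+1}-\tilde{\bm{m}}_{t+1}=\beta_1(\bm{m}_t-\tilde{\bm{m}}_t)+(1-\beta_1)(\bm{s}_t-\tilde{\bm{s}}_t)$, square it, sum over $t$, and use the initialization $m_{1,i}=\bm{0}_d$; the paper merely squares via the triangle inequality and Young's inequality with $\theta=\frac{1-\beta_1^2}{2\beta_1^2}$ followed by its geometric-sum lemma, which is where the slack factor $4$ enters. Your use of Jensen's inequality on the convex combination yields the same statement with the sharper constant $1$, which of course implies the claimed bound.
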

	\begin{proof}
		The proof is given in Appendix C-I.
	\end{proof}
	
	\begin{Lemma}\label{lm_x} (Consensus error of local estimate) Under Assumption~\ref{ass_graph}, the following inequality
		\begin{equation} \label{inequality_lm_x}
			\begin{split}
				&\sum_{t=1}^{T}\mathbb{E} [\|\bm{x}_{t} - \tilde{\bm{x}}_{t}\|^2] \leq \frac{ 40 \alpha^2 v_{\rm min}^{-1} }{(1 - \rho_A^2)^2}  \sum_{t=1}^{T} \mathbb{E} [\| \bm{s}_{t} - \tilde{\bm{s}}_t\|^2]\\
				& \qquad  + \frac{2}{1 - \rho_A^2} \Delta_1
			\end{split}
		\end{equation}
		holds for $T \geq 1$, where $\Delta_1 = \| \bm{x}_{1} - \tilde{\bm{x}}_{1} \|^2$ and
		\begin{equation}\label{rhoA}
			\rho_A = \rho \Big(A - \frac{\bm{1}_n \bm{1}_n'}{n}\Big).
		\end{equation}	
	\end{Lemma}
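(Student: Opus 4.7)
The plan is to derive a linear contraction recursion for $\|\bm{x}_t - \tilde{\bm{x}}_t\|^2$, iterate it, and then feed in the driving term by invoking Lemma~\ref{lm_mcon}. First I would stack the local update~(\ref{x_update}) into the aggregated form $\bm{x}_{t+1} = W\bm{x}_t - \alpha V_{t+1}^{-1/2}\bm{m}_{t+1}$, where $W := A\otimes I_d$ and $V_{t+1}$ is block diagonal with blocks $V_{t+1,i}$, and introduce the averaging projector $J := \tfrac{1}{n}\bm{1}_n\bm{1}_n'\otimes I_d$. Assumption~\ref{ass_graph} gives $JW = J$ and $\|W-J\| = \rho_A < 1$.

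Subtracting the mean (using $\tilde{\bm{x}}_{t+1} = J\bm{x}_{t+1}$ and $(W-J)\tilde{\bm{x}}_t = 0$) yields
\begin{equation*}
\bm{x}_{t+1} - \tilde{\bm{x}}_{t+1} = (W-J)(\bm{x}_t - \tilde{\bm{x}}_t) - \alpha (I - J) V_{t+1}^{-1/2}\bm{m}_{t+1}.
\end{equation*}
I then apply Young's inequality $\|a+b\|^2 \le (1+c)\|a\|^2 + (1+1/c)\|b\|^2$ with $c = (1-\rho_A^2)/(2\rho_A^2)$, giving the contraction rate $\mu := (1+\rho_A^2)/2 < 1$ and residual factor $1+1/c = (1+\rho_A^2)/(1-\rho_A^2) \le 2/(1-\rho_A^2)$. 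This delivers a one-step bound
\begin{equation*}
\|\bm{x}_{t+1} - \tilde{\bm{x}}_{t+1}\|^2 \le \mu \|\bm{x}_t - \tilde{\bm{x}}_t\|^2 + \tfrac{2\alpha^2}{1-\rho_A^2}\|(I-J)V_{t+1}^{-1/2}\bm{m}_{t+1}\|^2.
\end{equation*}

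Next, I would bound the driving term by $\|(I-J)V_{t+1}^{-1/2}\bm{m}_{t+1}\|^2 \le v_{\min}^{-1}\|\bm{m}_{t+1}\|^2$, using $\|I-J\|\le 1$ and $\|V_{t+1,i}^{-1/2}\| \le v_{\min}^{-1/2}$ guaranteed by the clipping step~(\ref{v3_update}). Iterating the one-step contraction from $t=1$ to $T-1$, exchanging the order of summation, and summing the geometric series $\sum_{k\ge 0}\mu^k = 1/(1-\mu) = 2/(1-\rho_A^2)$ produces
\begin{equation*}
\sum_{t=1}^{T}\|\bm{x}_t - \tilde{\bm{x}}_t\|^2 \le \tfrac{2}{1-\rho_A^2}\Delta_1 + \tfrac{4\alpha^2 v_{\min}^{-1}}{(1-\rho_A^2)^2}\sum_{t=1}^{T}\|\bm{m}_t\|^2,
\end{equation*}
which already reproduces the second term in~(\ref{inequality_lm_x}). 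The last step is to convert $\sum\|\bm{m}_t\|^2$ into a multiple of $\sum\|\bm{s}_t - \tilde{\bm{s}}_t\|^2$: splitting $\bm{m}_t = (\bm{m}_t - \tilde{\bm{m}}_t) + \tilde{\bm{m}}_t$, applying Lemma~\ref{lm_mcon} to the deviation part, and handling the mean $\tilde{\bm{m}}_t$ through the EMA recursion produces (up to bookkeeping) the constant 40 in the stated coefficient.

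The main obstacle is this last conversion: because $V_{t+1}^{-1/2}$ is block diagonal but varies across nodes, it does not commute with the projector $(I-J)$, so the naive operator-norm bound yields $v_{\min}^{-1}\|\bm{m}_{t+1}\|^2$ rather than a pure consensus error. Matching the clean right-hand side of~(\ref{inequality_lm_x}), which contains neither mean-momentum nor gradient-magnitude terms, requires carefully decomposing the momentum into consensus deviation plus mean, invoking Lemma~\ref{lm_mcon} to dispose of the deviation, and tracking constants meticulously; this bookkeeping is exactly where the explicit factor $40$ in the stated bound is produced.
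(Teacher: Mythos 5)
Your setup---stacking the update, subtracting the projector $J$, Young's inequality with $c=(1-\rho_A^2)/(2\rho_A^2)$, geometric summation via the contraction factor $(1+\rho_A^2)/2$, and a final appeal to Lemma~\ref{lm_mcon}---is exactly the paper's skeleton, and the $\frac{2}{1-\rho_A^2}\Delta_1$ term comes out the same way. The genuine gap is in how you treat the perturbation term. You bound $\|(I-J)V_{t+1}^{-1/2}\bm{m}_{t+1}\|^2$ by $v_{\min}^{-1}\|\bm{m}_{t+1}\|^2$ and then propose to split $\bm{m}_t=(\bm{m}_t-\tilde{\bm{m}}_t)+\tilde{\bm{m}}_t$, disposing of the mean part ``through the EMA recursion \dots up to bookkeeping.'' That last step cannot work: $\tilde{\bm{m}}_t$ tracks the average gradient, and its accumulated second moment is \emph{not} controlled by $\sum_t\|\bm{s}_t-\tilde{\bm{s}}_t\|^2$ alone---the paper's own Lemma~\ref{lm_mbar} shows that $\sum_t\mathbb{E}[\|\tilde{\bm{m}}_t\|^2]$ necessarily carries the terms $6n\sum_t\mathbb{E}[\|\nabla f(\bar x_t)\|^2]$ and $6n\sigma^2T$. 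Following your route, those terms would survive into the right-hand side of (\ref{inequality_lm_x}), which contains neither a gradient-norm nor a variance term, so the stated bound would not be reached no matter how carefully the constants are tracked.

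What the paper does instead is keep only the momentum \emph{consensus error} in the recursion: it bounds the perturbation by $\alpha v_{\min}^{-1/2}\|\bm{m}_{t+1}-\tilde{\bm{m}}_{t+1}\|$, then uses the one-step relation $\|\bm{m}_{t+1}-\tilde{\bm{m}}_{t+1}\|\le\beta_1\|\bm{m}_{t}-\tilde{\bm{m}}_{t}\|+(1-\beta_1)\|\bm{s}_{t}-\tilde{\bm{s}}_{t}\|$ from the proof of Lemma~\ref{lm_mcon}, applies Young's inequality and the geometric-sum lemma, and finally invokes Lemma~\ref{lm_mcon} to collect the coefficient $8+32\beta_1^2\le 40$. (You are right to worry that, because $V_{t+1}^{-1/2}$ varies across nodes, $(I-J)V_{t+1}^{-1/2}\tilde{\bm{m}}_{t+1}$ does not vanish; the paper's inequality $\|(I-J)V_{t+1}^{-1/2}\bm{m}_{t+1}\|\le v_{\min}^{-1/2}\|\bm{m}_{t+1}-\tilde{\bm{m}}_{t+1}\|$ itself glosses over a cross term involving the dispersion of the $V_{t+1,i}$. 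But the intended argument, and the only one compatible with the clean right-hand side of (\ref{inequality_lm_x}), is the one routed through the momentum consensus error, not through $\|\bm{m}_{t+1}\|$.)
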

	\begin{proof}
		The proof is given in Appendix C-II.
	\end{proof}
	
	\begin{Lemma}\label{lm_mbar} (Norm of momentum gradient) Under Assumption~\ref{ass_smooth}, the following inequality holds for $T \geq 1$:
		\begin{equation} \label{ineq_lm4}
			\begin{split}
				& \sum_{t=1}^{T}\mathbb {E} \big[\|\tilde{\bm{m}}_{t} \|^2\big]  \leq \frac{ 240 L^2 \alpha^2 v_{\rm min}^{-1} }{(1 - \rho_A^2)^2}  \sum_{t=1}^{T} \mathbb{E} [\| \bm{s}_{t} \!-\! \tilde{\bm{s}}_t\|^2] \\
				& \quad +  6 {n} \sum_{t=1}^{T}  \mathbb {E}\big[\| \nabla f(\bar{x}_t) \|^2\big]  + 6 n \sigma^2 T  + \frac{12 L^2}{1 - \rho_A^2} \Delta_1. \\
			\end{split}
		\end{equation}		
	\end{Lemma}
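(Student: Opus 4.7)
The plan rests on two averaging identities that fall immediately out of~(\ref{m_update}), (\ref{s_update}) and the chosen initialization. Averaging~(\ref{s_update}) and using the doubly stochastic property of $A$ from Assumption~\ref{ass_graph} together with $s_{1,i} = g_{1,i}$ gives, by a one-line induction, the identity $\bar{s}_t = \bar{g}_t = \frac{1}{n}\sum_{i=1}^{n}\nabla F_i(x_{t,i},\xi_i)$. Averaging~(\ref{m_update}) with $\bar{m}_1 = \bm{0}_d$ unfolds into the closed form $\bar{m}_t = (1-\beta_1)\sum_{k=1}^{t-1}\beta_1^{t-1-k}\bar{s}_k$ for $t \geq 2$, exhibiting $\bar{m}_t$ as a convex combination of past $\bar{s}_k$'s with weights of total mass at most one.

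I would first dispose of the momentum side: apply Jensen's inequality to the convex map $\|\cdot\|^2$ with the exponential weights $(1-\beta_1)\beta_1^{t-1-k}$, sum over $t$, and swap the order of summation so that the inner geometric series collapses to a factor of $(1-\beta_1)^{-1}$. This yields the clean bound $\sum_{t=1}^{T}\|\tilde{\bm{m}}_t\|^2 = n\sum_{t=1}^{T}\|\bar{m}_t\|^2 \leq n\sum_{t=1}^{T}\|\bar{s}_t\|^2$, reducing the claim to estimating $\mathbb{E}[\|\bar{s}_t\|^2]$.

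For the $\bar{s}_t$ piece I would decompose $\bar{s}_t = \nabla f(\bar{x}_t) + \frac{1}{n}\sum_{i=1}^{n}[\nabla f_i(x_{t,i}) - \nabla f_i(\bar{x}_t)] + \frac{1}{n}\sum_{i=1}^{n}\eta_{t,i}$ and apply Young's inequality term by term. The middle ``drift'' piece is controlled by the $L$-smoothness in Assumption~\ref{ass_smooth}, producing an $L^2/n$ factor in front of $\|\bm{x}_t - \tilde{\bm{x}}_t\|^2$; Jensen's inequality together with Assumption~\ref{assumption_sto_grad}(c) gives $\mathbb{E}[\|\frac{1}{n}\sum_i \eta_{t,i}\|^2] \leq \sigma^2$. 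Taking expectation and multiplying through by $n$, this produces an intermediate inequality of the schematic form $\sum_t \mathbb{E}[\|\tilde{\bm{m}}_t\|^2] \leq 6L^2\sum_t \mathbb{E}[\|\bm{x}_t - \tilde{\bm{x}}_t\|^2] + 6n\sum_t \mathbb{E}[\|\nabla f(\bar{x}_t)\|^2] + 6n\sigma^2 T$. The coefficient $6$ is precisely what is needed: invoking Lemma~\ref{lm_x} to trade $\sum_t \mathbb{E}[\|\bm{x}_t - \tilde{\bm{x}}_t\|^2]$ for a term in $\sum_t \mathbb{E}[\|\bm{s}_t - \tilde{\bm{s}}_t\|^2]$ plus a multiple of $\Delta_1$ multiplies the factors $\frac{40\alpha^2 v_{\min}^{-1}}{(1-\rho_A^2)^2}$ and $\frac{2}{1-\rho_A^2}$ from that lemma by $6L^2$, reproducing the stated constants $240$ and $12$ in~(\ref{ineq_lm4}).

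The main obstacle will be bookkeeping the constants: a qualitatively correct version of~(\ref{ineq_lm4}) follows almost immediately from the three-step sequence above, but pinning down the exact coefficient $6n$ rather than a larger multiple requires care in how Young's inequality is chained across the three-term decomposition of $\bar{s}_t$, so that no loose factor slips in before the $L^2$ that must subsequently be amplified by the $(1-\rho_A^2)^{-2}$ blow-up inherited from Lemma~\ref{lm_x}. Beyond this accounting, the argument is a straightforward combination of the two averaging identities, one Jensen step on the exponential weights, and the previous consensus-error lemma.
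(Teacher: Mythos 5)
Your argument is correct and in fact yields slightly sharper constants than the stated bound. The decomposition of $\bar{s}_t$ into $\nabla f(\bar{x}_t)$, the smoothness-controlled drift $\frac{1}{n}\sum_i[\nabla f_i(x_{t,i})-\nabla f_i(\bar{x}_t)]$, and the noise $\frac{1}{n}\sum_i\eta_{t,i}$, as well as the final substitution of Lemma~\ref{lm_x}, coincide with the paper's proof. Where you differ is in handling the momentum recursion: the paper keeps the one-step contraction $\|\tilde{\bm{m}}_{t+1}\|\leq\beta_1\|\tilde{\bm{m}}_{t}\|+(1-\beta_1)\sqrt{n}\|\bar{s}_t\|$, squares it with Lemma~\ref{lm_sqr} at $\theta=\frac{1-\beta_1^2}{2\beta_1^2}$, and sums via Lemma~\ref{ratio_sum}, which produces the overall factor $3(1-\beta_1)^2\frac{2(1+\beta_1^2)}{(1-\beta_1^2)^2}\leq 6$; you unroll $\bar{m}_t$ into an explicit exponential average of the $\bar{s}_k$ and apply Jensen with a summation swap, giving $\sum_{t}\|\tilde{\bm{m}}_t\|^2\leq n\sum_{t}\|\bar{s}_t\|^2$ with constant $1$. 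After the three-term Young split your intermediate coefficient is therefore $3$, not the $6$ you anticipated, so your route actually delivers $120$ and $6$ in place of the stated $240$ and $12$ --- a strictly stronger inequality that trivially implies~(\ref{ineq_lm4}) since all summands are nonnegative. The two routes are close in substance (Lemma~\ref{ratio_sum} is itself proved by unrolling), but yours cleanly decouples the geometric averaging from the decomposition of $\bar{s}_t$ and loses a factor of two less in the constants.
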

	\begin{proof}
		The proof is given in Appendix C-III.
	\end{proof}
	
	\begin{Lemma}\label{lm_v_con} (Consensus error of adaptive vector) Under Assumption~\ref{assumption_sto_grad}(b), the following inequality
		\begin{equation} 
			\begin{split}
				\sum_{t=1}^{T} \mathbb{E} [\| \bm{v}_{t} - \tilde{\bm{v}}_{t}\|^2] \leq & \frac{36G^2 }{(1-\rho_A)^2} \sum_{t=1}^{T}  \mathbb{E} [\| \bm{s}_t - \tilde{\bm{s}}_t \|^2] \\
				&  + \frac{2}{1 - \beta_2^2} \Delta_2 
			\end{split}
		\end{equation}
	holds for $T \geq 1$, where $\Delta_2 = \| \bm{v}_{1} - \tilde{\bm{v}}_{1} \|^2$.
	\end{Lemma}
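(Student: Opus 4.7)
The plan is to bound $\sum_t \mathbb{E}[\|\bm{v}_t - \tilde{\bm{v}}_t\|^2]$ by first removing the clipping operator, then exploiting the convex structure of the $\hat{v}$-update to obtain a telescoping recursion, and finally converting the resulting consensus error on the coordinate-wise squared gradient-tracking estimators into one on $\bm{s}_t$ itself.

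First I would observe that the operator ${\rm Clip}(\cdot, v_{\min}, v_{\max})$ in (\ref{v3_update}) is the coordinate-wise Euclidean projection onto the axis-aligned box $[v_{\min}, v_{\max}]^d$, hence $1$-Lipschitz. Combined with the standard identity $\sum_{i,j}\|x_i - x_j\|^2 = 2n\|\bm{x} - \tilde{\bm{x}}\|^2$, this non-expansiveness yields $\|\bm{v}_{t+1} - \tilde{\bm{v}}_{t+1}\|^2 \leq \|\hat{\bm{v}}_{t+1} - \tilde{\hat{\bm{v}}}_{t+1}\|^2$, reducing the analysis to the unclipped iterate. Writing (\ref{v_update}) in aggregate form expresses $\hat{\bm{v}}_{t+1} - \tilde{\hat{\bm{v}}}_{t+1}$ as the convex combination $\beta_2(\bm{v}_t - \tilde{\bm{v}}_t) + (1-\beta_2)(\bm{s}_t \odot \bm{s}_t - \widetilde{\bm{s}_t \odot \bm{s}_t})$, so Jensen's inequality applied to $\|\cdot\|^2$ with weights $(\beta_2, 1-\beta_2)$ gives the per-step recursion
\begin{equation*}
\|\bm{v}_{t+1} - \tilde{\bm{v}}_{t+1}\|^2 \leq \beta_2\|\bm{v}_t - \tilde{\bm{v}}_t\|^2 + (1-\beta_2)\|\bm{s}_t\odot\bm{s}_t - \widetilde{\bm{s}_t\odot\bm{s}_t}\|^2.
\end{equation*}
Summing over $t=1,\ldots,T$, telescoping by folding $\sum_{t=1}^T\|\bm{v}_{t+1}-\tilde{\bm{v}}_{t+1}\|^2$ back onto $\sum_{t=1}^T\|\bm{v}_t-\tilde{\bm{v}}_t\|^2 - \Delta_2$, and using the elementary inequality $\tfrac{1}{1-\beta_2} = \tfrac{1+\beta_2}{1-\beta_2^2} \leq \tfrac{2}{1-\beta_2^2}$ to absorb the initial term, I obtain
\begin{equation*}
\sum_{t=1}^T \mathbb{E}[\|\bm{v}_t - \tilde{\bm{v}}_t\|^2] \leq \frac{2 \Delta_2}{1-\beta_2^2} + \sum_{t=1}^T \mathbb{E}[\|\bm{s}_t\odot\bm{s}_t - \widetilde{\bm{s}_t\odot\bm{s}_t}\|^2].
\end{equation*}

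The main obstacle is the last step: establishing $\|\bm{s}_t\odot\bm{s}_t - \widetilde{\bm{s}_t\odot\bm{s}_t}\|^2 \leq \frac{36 G^2}{(1-\rho_A)^2}\,\|\bm{s}_t - \tilde{\bm{s}}_t\|^2$. My plan is to use the elementwise factorization $a^2 - b^2 = (a-b)(a+b)$, which gives $\|s_{t,i}\odot s_{t,i} - s_{t,j}\odot s_{t,j}\|^2 \leq \|s_{t,i} + s_{t,j}\|_\infty^2\,\|s_{t,i} - s_{t,j}\|^2$, and then extract $\|s_{t,i}+s_{t,j}\|_\infty^2$ via a uniform bound on $\|s_{t,i}\|_\infty$. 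That uniform bound comes from the GT recursion (\ref{s_update}): a direct induction shows $\bar{s}_t = \bar{g}_t$, so $\|\bar{s}_t\| \leq G$ by Assumption~\ref{assumption_sto_grad}(b), while the consensus recursion $\|\bm{s}_{t+1} - \tilde{\bm{s}}_{t+1}\| \leq \rho_A\|\bm{s}_t - \tilde{\bm{s}}_t\| + \|(I - \tfrac{\bm{1}\bm{1}'}{n}\otimes I_d)(\bm{g}_{t+1} - \bm{g}_t)\|$ (under Assumption~\ref{ass_graph}) iterates under Assumption~\ref{assumption_sto_grad}(b) to a uniform bound of the form $\|s_{t,i}\| \leq \tfrac{3G}{1-\rho_A}$. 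Plugging this into the elementwise estimate and summing over $i,j$ through $\sum_{i,j}\|x_i-x_j\|^2 = 2n\|\bm{x}-\tilde{\bm{x}}\|^2$ delivers the claimed coefficient $\tfrac{36 G^2}{(1-\rho_A)^2}$, which combined with the previous display produces the lemma.

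The delicate part will be calibrating the $\|s_{t,i}\|$ estimate so that the constant lands exactly on $36$: the treatment of the initialization $\|\bm{s}_1 - \tilde{\bm{s}}_1\|$ and the bound used for $\|\bm{g}_{t+1} - \bm{g}_t\|$ while iterating the GT recursion both influence the numerical factor, though the form $\frac{O(G^2)}{(1-\rho_A)^2}$ is robust to these choices.
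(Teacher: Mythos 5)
Your proposal follows essentially the same route as the paper's Appendix C-IV: non-expansiveness of the clipping operator, a uniform bound $|[\bm{s}_t]_l|\leq G+\tfrac{2G}{1-\rho_A}$ obtained by unrolling the gradient-tracking recursion, a one-step contraction of $\|\bm{v}_t-\tilde{\bm{v}}_t\|$ driven by $\|\bm{s}_t-\tilde{\bm{s}}_t\|$ via the Hadamard-product structure, and a geometric summation (Lemma~\ref{ratio_sum}) to conclude. The only differences are cosmetic — you use Jensen's inequality with weights $(\beta_2,1-\beta_2)$ and a pairwise $a^2-b^2=(a-b)(a+b)$ factorization where the paper uses the triangle inequality plus Lemma~\ref{lm_sqr} with $\theta=\tfrac{1-\beta_2^2}{2\beta_2^2}$ and a direct projection bound — and both land on the same constants.
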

	\begin{proof}
		The proof is given in Appendix C-IV.
	\end{proof}

	\begin{Lemma}\label{lm_s_con} (Consensus error of GT estimator)	Under Assumptions~\ref{ass_smooth} and \ref{ass_graph}, for $\alpha^2 \leq \min \left\{\frac{1}{2 N_1} , \frac{v_{\min} (1-\rho_A^2)^2}{72} \right\}$ and $T \geq 1$ it holds that
		\begin{equation} \label{ineq_lm6}
			\begin{split}
				\sum_{t=1}^{T} \mathbb{E}\big[\|\bm{s}_{t} - \tilde{\bm{s}}_{t}\|^2\big] &\leq  2 n \alpha^2 N_2  \sum_{t=1}^{T}\mathbb{E} \big[\| \nabla f(\bar{x}_t)\|^2 \big]\\
				& \quad   + 2nT N_3  \sigma^2  + 2 N_4 \Delta,
			\end{split}
		\end{equation}
		where $\rho_A$ is given by~(\ref{rhoA}), $\Delta = \sum_{i=1}^{3} \Delta_i $ with $\Delta_1, \Delta_2$ given in Lemmas~\ref{lm_x} and \ref{lm_v_con}, respectively, while $\Delta_3 = \| \bm{s}_{1} - \tilde{\bm{s}}_{1}\| ^2$, and
		\begin{equation} \label{def_N1234}
			\begin{split}
				N_1 &= \frac{14400L^2 v_{\min}^{-1}}{(1-\rho_A^2)^4} + \frac{72 v_{\min}^{-1} }{(1 - \rho_A^2)^2} \left(4 + \frac{20 L^2}{3}\right) ,\\
				N_2 &= \frac{504  v_{\min}^{-1}}{(1 - \rho_A^2)^2} , \qquad N_3 = 12 + \frac{216}{(1 - \rho_A^2)^2} , \\
				N_4 &= \max \left\{ \frac{720 L^2}{(1 - \rho_A^2)^5} + \frac{24 L^2}{1 - \rho_A^2} , \frac{2 \beta_1^2}{1 - \beta_1^2}, \frac{1}{2} \right\}.
			\end{split}
		\end{equation}
	\end{Lemma}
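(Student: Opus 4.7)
The plan is to set up a self-contracting recursion for $\|\bm{s}_{t+1} - \tilde{\bm{s}}_{t+1}\|^2$ based on the mixing property of $A$, convert the gradient innovation $\|\bm{g}_{t+1} - \bm{g}_t\|^2$ into quantities already controlled by Lemmas~\ref{lm_mcon}--\ref{lm_v_con}, and then close the loop by using the stepsize condition to absorb a self-referential $\alpha^2 \sum \mathbb{E}\|\bm{s}_t - \tilde{\bm{s}}_t\|^2$ term into the left-hand side.

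First, since $A$ is doubly stochastic, averaging (\ref{s_update}) gives $\bar s_{t+1} = \bar s_t + \bar g_{t+1} - \bar g_t$, so subtracting from the aggregated update yields $\bm{s}_{t+1} - \tilde{\bm{s}}_{t+1} = ((A - J)\otimes I_d)(\bm{s}_t - \tilde{\bm{s}}_t) + ((I_n - J)\otimes I_d)(\bm{g}_{t+1} - \bm{g}_t)$ with $J = \bm{1}_n\bm{1}_n'/n$ and $\|A - J\| = \rho_A$. Applying Young's inequality with parameter $(1-\rho_A^2)/(2\rho_A^2)$ compresses the contraction to $(1+\rho_A^2)/2$; summing over $t=1,\dots,T$ and rearranging produces the preliminary bound $\sum_{t=1}^{T} \mathbb{E}\|\bm{s}_t - \tilde{\bm{s}}_t\|^2 \leq \frac{4}{(1-\rho_A^2)^2}\sum_{t=1}^{T} \mathbb{E}\|\bm{g}_{t+1} - \bm{g}_t\|^2 + \frac{2}{1-\rho_A^2}\Delta_3$.

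Next, splitting $g_{t+1,i} - g_{t,i}$ via $\pm\nabla f_i(x_{t+1,i})$ and $\pm\nabla f_i(x_{t,i})$, Assumptions~\ref{ass_smooth} and~\ref{assumption_sto_grad}(c) give $\mathbb{E}\|\bm{g}_{t+1} - \bm{g}_t\|^2 \leq 6n\sigma^2 + 3L^2\,\mathbb{E}\|\bm{x}_{t+1} - \bm{x}_t\|^2$. From the aggregated form of (\ref{x_update}) and $\|V_{t+1}^{-1/2}\| \leq v_{\min}^{-1/2}$, one obtains $\|\bm{x}_{t+1} - \bm{x}_t\|^2 \leq 8\|\bm{x}_t - \tilde{\bm{x}}_t\|^2 + 4\alpha^2 v_{\min}^{-1}\big(\|\bm{m}_{t+1} - \tilde{\bm{m}}_{t+1}\|^2 + \|\tilde{\bm{m}}_{t+1}\|^2\big)$. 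Invoking Lemmas~\ref{lm_mcon}, \ref{lm_x}, \ref{lm_mbar} and \ref{lm_v_con} then converts every sum on the right into a combination of $\alpha^2 \sum_t \mathbb{E}\|\bm{s}_t - \tilde{\bm{s}}_t\|^2$, $\sum_t \mathbb{E}\|\nabla f(\bar x_t)\|^2$, $\sigma^2 T$, and initial-condition terms $\Delta_1, \Delta_2, \Delta_3$. Collecting constants gives an estimate of the form $\sum_t \mathbb{E}\|\bm{s}_t - \tilde{\bm{s}}_t\|^2 \leq \alpha^2 N_1 \sum_t \mathbb{E}\|\bm{s}_t - \tilde{\bm{s}}_t\|^2 + n\alpha^2 N_2 \sum_t \mathbb{E}\|\nabla f(\bar x_t)\|^2 + n N_3 \sigma^2 T + N_4 \Delta$ with $N_1,\dots,N_4$ as defined in~(\ref{def_N1234}). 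The condition $\alpha^2 \leq 1/(2N_1)$ absorbs the self-referential term, while $\alpha^2 \leq v_{\min}(1-\rho_A^2)^2/72$ is needed to keep the coefficient in front of $\|\tilde{\bm{m}}_{t+1}\|^2$ small enough that the $N_2$ and $N_3$ constants retain the clean form in~(\ref{def_N1234}); multiplying through by $2$ then yields the stated inequality.

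The main obstacle is bookkeeping rather than any single analytic step: the factors of $(1-\rho_A^2)^{-k}$, $v_{\min}^{-1}$, and $L^2$ accumulate across multiple substitutions and must be tracked precisely to reproduce~(\ref{def_N1234}). Moreover, the boundary terms arising from the one-step index shift between $\bm{m}_{t+1}$ and $\bm{m}_t$ (and similarly for $\bm{x}$) must be absorbed cleanly into $N_4\Delta$ via the $\max\{\cdot\}$ structure, which is the origin of the entry $2\beta_1^2/(1-\beta_1^2)$ in $N_4$: it comes from bounding $\|\bm{m}_2 - \tilde{\bm{m}}_2\|^2$ back in terms of $\|\bm{s}_1 - \tilde{\bm{s}}_1\|^2 = \Delta_3$ through the momentum update~(\ref{m_update}).
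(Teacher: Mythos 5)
Your proposal follows essentially the same route as the paper: the same contraction recursion for $\bm{s}_{t+1}-\tilde{\bm{s}}_{t+1}$ driven by the gradient innovation, the same reduction of $\bm{g}_{t+1}-\bm{g}_t$ to $\bm{x}_{t+1}-\bm{x}_t$ via $L$-smoothness (with $\rho(A-I_n)\le 2$) and then to the quantities controlled in Lemmas~\ref{lm_mcon}--\ref{lm_mbar}, and the same absorption of the self-referential $\alpha^2\sum_t\mathbb{E}\|\bm{s}_t-\tilde{\bm{s}}_t\|^2$ term via $\alpha^2\le 1/(2N_1)$. The only cosmetic difference is ordering: the paper substitutes the momentum update to rewrite $\bm{m}_{t+1}$ in terms of $\bm{m}_t$ and $\bm{s}_t$ \emph{before} squaring and summing (and uses $\alpha^2\le v_{\min}(1-\rho_A^2)^2/72$ to keep the one-step contraction factor below one), which sidesteps the index-shift bookkeeping you instead handle at the level of the summed inequalities.
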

	\begin{proof}
		The proof is given in Appendix C-V.
	\end{proof}
	
	Next, we present our main result, which provides an upper bound on the average gradient norm by Lemmas~\ref{lm_smooth}-\ref{lm_s_con}.
	\begin{Theorem} \label{The_1}
		Suppose Assumptions~\ref{ass_smooth}-\ref{ass_graph} hold, and there exists~$f^* \in \mathbb{R}$ such that $f(x) \geq f^*$ holds for all $x\in \mathbb{R}^d$. 
		For any positive scalar $\omega > 0$, by choosing parameter $\alpha$ such that $\alpha^2 <  \min \left\{\frac{1}{2 N_1} , \frac{v_{\min} (1-\rho_A^2)^2}{72}, \frac{v_{\max}^{-1}}{(N_2')^2} \right\}$ and $\beta_1$ such that $\frac{\beta_1^2}{\omega(1 - \beta_1)^2} \leq \alpha^2$, then we have
		\begin{equation}\label{gradient_bound2}
			\begin{split}
				& \frac{1}{T}\sum_{t=1}^{T}\mathbb{E} \big[\|\nabla f(\bar x_t)\|^2  \big] \leq O \bigg(\frac{f(\bar{x}_1) - f^* + \Delta}{T} + \sigma^2 \bigg),
			\end{split}
		\end{equation}
		where 
		\begin{equation} \label{def_N2_prime}
			\begin{split}
				N_2' &= N_2 \left[\frac{L}{9} \mu + \mu + \frac{72 G^3 v_{\min}^{-2}}{(1 - \rho_A)^2} \right]  + \frac{v_{\min}^{-1/2}}{2} + v_{\min}^{-1} (L+1) \\
				& \quad  + \omega \left( \mu + \frac{L^2}{36}\right) \left[3 + 4 N_2 (L^2 + 1)\right]
			\end{split}
		\end{equation} 
		with \begin{equation}
			\mu = 36 v_{\min}^{-1} v_{\max}^{1/2} + L + 1.
		\end{equation}
	\end{Theorem}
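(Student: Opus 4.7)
The plan is to turn the one-step descent inequality of Lemma~\ref{lm_smooth} into a summed inequality on which $\sum_{t=1}^{T}\mathbb{E}[\|\nabla f(\bar x_t)\|^2]$ appears with a strictly positive coefficient on the left-hand side after every other error quantity has been eliminated via Lemmas~\ref{lm_mcon}--\ref{lm_s_con}.

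First, I would take expectations in Lemma~\ref{lm_smooth} and sum over $t=1,\dots,T$. Because $z_{1,i}=x_{1,i}$ by~(\ref{z_def}), the sum on the left telescopes to $\mathbb{E}[f(\bar z_{T+1})]-f(\bar x_1)\geq f^*-f(\bar x_1)$. The last term $M_6\sum_i\|\eta_{t,i}\|^2$ becomes at most $nM_6\sigma^2T$ in expectation via Assumption~\ref{assumption_sto_grad}(c). The remaining right-hand side contains the five consensus/norm sums listed in items (i)--(v). I would then substitute Lemmas~\ref{lm_mcon}, \ref{lm_x}, \ref{lm_mbar}, and \ref{lm_v_con} to rewrite each of the sums $\sum\|\bm{m}_t-\tilde{\bm{m}}_t\|^2$, $\sum\|\bm{x}_t-\tilde{\bm{x}}_t\|^2$, $\sum\|\tilde{\bm{m}}_t\|^2$, and $\sum\|\bm{v}_t-\tilde{\bm{v}}_t\|^2$ as linear combinations of $\sum\mathbb{E}[\|\bm{s}_t-\tilde{\bm{s}}_t\|^2]$, $\sum\mathbb{E}[\|\nabla f(\bar x_t)\|^2]$, a $6n\sigma^2T$ noise term produced by Lemma~\ref{lm_mbar}, and the initialization constants $\Delta_1,\Delta_2$. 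Finally I would invoke Lemma~\ref{lm_s_con} to eliminate the remaining $\sum\mathbb{E}[\|\bm{s}_t-\tilde{\bm{s}}_t\|^2]$; the admissibility hypothesis $\alpha^2\leq\min\{1/(2N_1),\,v_{\min}(1-\rho_A^2)^2/72\}$ is exactly the one required for that lemma.

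After these substitutions the inequality takes the schematic form
\begin{equation*}
\alpha\Bigl(v_{\max}^{-1/2}-C_1\alpha-C_2\tfrac{\beta_1^2}{(1-\beta_1)^2}\Bigr)\sum_{t=1}^{T}\mathbb{E}\bigl[\|\nabla f(\bar x_t)\|^2\bigr]\leq f(\bar x_1)-f^*+C_3\Delta+C_4 n\sigma^2 T,
\end{equation*}
where the $\beta_1$-term originates from $M_3=M_4$, both of which carry the factor $\beta_1^2/(1-\beta_1)^2$. Splitting this contribution via a Young-type inequality with weight $\omega>0$ and using the hypothesis $\beta_1^2/[\omega(1-\beta_1)^2]\leq\alpha^2$ converts the $\beta_1$-term into an $\omega\alpha^2$-sized correction. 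Tracking the constants coming out of Lemmas~\ref{lm_x}--\ref{lm_s_con} shows that the net coefficient of $\sum\mathbb{E}[\|\nabla f(\bar x_t)\|^2]$ reduces to $\alpha(v_{\max}^{-1/2}-\alpha N_2')$ with $N_2'$ as given by~(\ref{def_N2_prime}): the summand $\tfrac{v_{\min}^{-1/2}}{2}+v_{\min}^{-1}(L+1)$ comes directly from the leading factor in Lemma~\ref{lm_smooth}; the term $N_2[\tfrac{L}{9}\mu+\mu+\tfrac{72G^3v_{\min}^{-2}}{(1-\rho_A)^2}]$ collects the $\|s-\tilde s\|^2$-to-$\|\nabla f\|^2$ back-reaction through Lemma~\ref{lm_s_con} applied to $M_1,M_2,M_5$; and the $\omega$-weighted term captures the momentum-induced contribution via $M_3,M_4$ and Lemma~\ref{lm_mbar}. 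The stipulation $\alpha^2<v_{\max}^{-1}/(N_2')^2$ then guarantees that the coefficient is at least of order $\alpha v_{\max}^{-1/2}$. Dividing through by $\alpha T$ yields the announced bound, with $f(\bar x_1)-f^*$ and $\Delta$ collapsing into the $O(1/T)$ piece and the $n\sigma^2 T$ contribution producing the $O(\sigma^2)$ floor.

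The main obstacle is the bookkeeping of constants through the cascade of substitutions: each of Lemmas~\ref{lm_x}, \ref{lm_mbar}, and \ref{lm_v_con} re-injects $\sum\|\bm{s}_t-\tilde{\bm{s}}_t\|^2$, which in turn passes through Lemma~\ref{lm_s_con} and re-emits both $\sum\|\nabla f(\bar x_t)\|^2$ (with coefficient proportional to $\alpha^2N_2$) and $nT\sigma^2$. Verifying that the sum of all such feedbacks, combined with the direct $O(\alpha)$ term from Lemma~\ref{lm_smooth} and the $\omega$-weighted momentum contribution, assembles into exactly the expression~(\ref{def_N2_prime}) for $N_2'$ is algebraically delicate but conceptually routine; the free parameter $\omega>0$ is retained in the statement precisely because its only role is to balance the momentum error against the gradient norm.
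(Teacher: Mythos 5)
Your proposal follows essentially the same route as the paper's proof: sum the descent inequality of Lemma~\ref{lm_smooth}, telescope using $\bar z_1=\bar x_1$ and $f\geq f^*$, substitute Lemmas~\ref{lm_mcon}--\ref{lm_v_con} to reduce everything to $\sum\mathbb{E}[\|\bm{s}_t-\tilde{\bm{s}}_t\|^2]$ and $\sum\mathbb{E}[\|\nabla f(\bar x_t)\|^2]$, eliminate the former via Lemma~\ref{lm_s_con}, absorb the $\beta_1^2/(1-\beta_1)^2$ momentum contribution through the hypothesis $\beta_1^2/[\omega(1-\beta_1)^2]\leq\alpha^2$, and use $\alpha<v_{\max}^{-1/2}/N_2'$ to keep the net coefficient $\alpha(v_{\max}^{-1/2}-\alpha N_2')$ positive before dividing by $\alpha T$. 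This matches the paper's argument step for step, including the identification of where each summand of $N_2'$ originates.
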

	
	\begin{proof}
		By summing~(\ref{formula_lm_smooth}) over $t=1,\dots, T$ and taking the expectation, one has
		\begin{equation} \label{the_inequality_1}
			\begin{split}
				& \mathbb{E}[f(\bar{z}_{T+1})] -f(\bar{z}_{1}) \\
				& \leq  - \alpha \Big( v_{\rm max} ^{-1/2} -  \frac{\alpha v_{\rm min}^{- 1/2}}{2} - \alpha v_{\rm min}^{-1} (L+ 1) \Big)  \sum_{t=1}^{T} \mathbb{E} [\|\nabla f(\bar{x}_t)\|^2] \\
				& \quad + M_1 \sum_{t=1}^{T} \mathbb{E} [\| {\bm{x}}_t - \tilde{\bm{x}}_t \|^2]  + M_2 \sum_{t=1}^{T} \mathbb{E} [\| \bm{s}_{t} - \tilde{\bm{s}}_t \|^2] \\
				& \quad + M_3 \sum_{t=1}^{T} \mathbb{E} [\|  \bm{m}_t - \tilde{\bm{m}}_t \|^2] + M_4 \sum_{t=1}^{T} \mathbb{E} [\| \tilde{\bm{m}}_t \|^2] \\
				& \quad+ M_5 \sum_{t=1}^{T} \mathbb{E} [\| \bm{v}_{t} - \tilde{\bm{v}}_t\|^2] + M_6 \sum_{t=1}^{T} \sum_{i=1}^{n} \| \eta_{t,i}\|^2.
			\end{split}
		\end{equation}
		
		Then, by Lemmas~\ref{lm_mcon}-\ref{lm_s_con} and the fact that $\alpha^2 \leq \frac{v_{\min} (1-\rho_A^2)^2}{72} $ and $\frac{\beta_1^2}{\omega (1 - \beta_1)^2} \leq \alpha^2$, we rearrange the terms and get
		\begin{equation} \label{The_inequatly_1}
			\begin{split}
				\mathbb{E} [f(\bar{z}_{T+1})] \leq& f(\bar{z}_{1}) - \alpha v_{\rm max} ^{-1/2} \sum_{t=1}^{T} \mathbb{E} [\|\nabla f(\bar{x}_t)\|^2] \\
				& + \alpha^2 N_2'  \sum_{t=1}^{T}\mathbb{E} \big[\| \nabla f(\bar{x}_t)\|^2 \big] + T N_3'  \sigma^2  + N_4' \Delta
			\end{split}
		\end{equation}
		with $N_2'$ given by~(\ref{def_N2_prime}), and
		\begin{equation} \label{def_N34_prime}
			\begin{split}
				N_3' &= \mu N_3\left[ \left(\frac{L}{9} + \omega (4 + 4 L^2) + 1 \right) \right] + N_3 \left(\frac{\omega L^2}{9}   + \frac{L^4}{10} \right) \\
				& \quad + \mu \left(\frac{1}{9} + 3\omega \right) + \frac{\omega L^2}{24},\\
				N_4' &= \frac{\mu N_4}{n} \left[\frac{10 L}{81 } + 1 + \omega \alpha^2 \left(8 + \frac{20 L^2}{3} \right) + \frac{72 G^3 v_{\min}^{-2}}{(1 - \rho_A)^2}\right] \\
				& \quad + \max \left\{ \frac{2 L \mu}{9 n (1 - \rho_A^2) } + \frac{12 \omega L^2\alpha^2}{n (1 - \rho_A^2)} ,    \frac{G^2 v_{\min}^{-2} \mu}{18 n (1 - \beta_2^2)}\right\}.
			\end{split}
		\end{equation}
		
		Note that the value of $N_2'$ is independent of $\alpha$ and $\beta_1$. 
		Then, by setting $0 < \alpha < \frac{v_{\max}^{-1/2}}{N_2'}$, which implies that $\alpha (v_{\max}^{-1/2} - \alpha N_2') > 0$, we can rearrange the terms in~(\ref{The_inequatly_1}) and obtain
		\begin{equation} \label{The_inequatly_2}
			\begin{split}
				& \frac{1}{T} \sum_{t=1}^{T} \mathbb{E} [\|\nabla f(\bar{x}_t)\|^2] \leq \frac{f(\bar{x}_{1}) - f^* + T N_3'  \sigma^2  + N_4' \Delta}{\alpha T (v_{\max}^{-1/2} - \alpha N_2')}
			\end{split}
		\end{equation}
		with $N_2'$ given in (\ref{def_N2_prime}) and $N_3', N_4'$ in~(\ref{def_N34_prime}). 
		The inequality~(\ref{The_inequatly_2}) implies that		
		\begin{equation}
			\begin{split}
				& \frac{1}{T}\sum_{t=1}^{T} \mathbb{E} [\|\nabla f(\bar{x}_t)\|^2] \leq {O} \left(   
				\frac{f(\bar{x}_{1}) - f^* + \Delta}{ T } + \sigma^2\right),
			\end{split}
		\end{equation}
		which completes the proof.
	\end{proof}
	
	Finally, by using Theorem~\ref{The_1}, Lemmas~\ref{lm_x} and \ref{lm_s_con}, we establish the upper bound on the optimality gap in the following corollary.
	\begin{Corollary} \label{Col_1}
		Under the conditions of Theorem~\ref{The_1}, it holds that
		\begin{equation}
			\begin{split}
				&\frac{1}{T}\sum_{t=1}^{T} \left[\mathbb{E} [\|\nabla f(\bar x_t)\|^2 + \frac{1}{n}\sum_{i=1}^{n}\|x_{t,i} - \bar{x}_t\|^2]\right]   \\
				& \leq \frac{ 80 N_2  \alpha^2 v_{\rm min}^{-1} }{\alpha (1 - \rho_A^2)^2 (v_{\max}^{-1/2} - \alpha N_2')} \cdot \frac{f(\bar{x}_{1}) - f^*}{T} \\
				& \quad +  \left[\frac{ 80 \alpha^2 v_{\rm min}^{-1} }{(1 - \rho_A^2)^2} \! \left(\frac{N_2 N_4'}{\alpha (v_{\max}^{-1/2} \!-\! \alpha N_2')} \!+\! \frac{N_4}{n} \right) \!+\!  \frac{2}{n (1 \!-\! \rho_A^2)}\right] \cdot \frac{\Delta}{T} \\
				& \quad + \frac{ 40 \alpha^2 v_{\rm min}^{-1} }{(1 - \rho_A^2)^2} \cdot  \left(\frac{ 2 N_2 N_3' }{\alpha (v_{\max}^{-1/2} - \alpha N_2')} + 2 N_3 \right) \sigma^2\\
				&    \leq O \bigg( \frac{f(\bar{x}_1) - f^* + \Delta}{T} + \sigma^2\bigg).
			\end{split}
		\end{equation}
	\end{Corollary}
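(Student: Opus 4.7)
The plan is to decompose the optimality gap into its two natural pieces, namely the expected squared gradient norm $\mathbb{E}[\|\nabla f(\bar{x}_t)\|^2]$ and the per-node consensus error $\frac{1}{n}\sum_{i=1}^n\|x_{t,i}-\bar{x}_t\|^2$, and bound each using the lemmas already established. Observe that $\frac{1}{n}\sum_{i=1}^n\|x_{t,i}-\bar{x}_t\|^2 = \frac{1}{n}\|\bm{x}_t - \tilde{\bm{x}}_t\|^2$, so the two summands can be controlled separately.

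For the gradient norm part, Theorem~\ref{The_1} (specifically the intermediate estimate (\ref{The_inequatly_2}) in its proof, which is tighter than the stated $O(\cdot)$ form) gives exactly the bound $\frac{1}{T}\sum_{t=1}^{T}\mathbb{E}[\|\nabla f(\bar{x}_t)\|^2] \leq \frac{f(\bar{x}_{1}) - f^* + TN_3'\sigma^2 + N_4'\Delta}{\alpha T (v_{\max}^{-1/2}-\alpha N_2')}$, which is the quantity appearing in the three terms on the right-hand side of the corollary that share the factor $\alpha(v_{\max}^{-1/2}-\alpha N_2')$ in the denominator. For the consensus error part, I would chain Lemma~\ref{lm_x} and Lemma~\ref{lm_s_con}: Lemma~\ref{lm_x} bounds $\sum_t \mathbb{E}[\|\bm{x}_t-\tilde{\bm{x}}_t\|^2]$ by a multiple of $\sum_t \mathbb{E}[\|\bm{s}_t-\tilde{\bm{s}}_t\|^2]$ plus the initial term $\frac{2}{1-\rho_A^2}\Delta_1$, and then Lemma~\ref{lm_s_con} bounds $\sum_t \mathbb{E}[\|\bm{s}_t-\tilde{\bm{s}}_t\|^2]$ by a multiple of $\sum_t \mathbb{E}[\|\nabla f(\bar{x}_t)\|^2]$ plus a $\sigma^2$ term and a $\Delta$ term. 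Finally, I would substitute the gradient-norm bound from Theorem~\ref{The_1} into this chain.

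Carrying out the substitutions step by step: chaining Lemma~\ref{lm_x} with Lemma~\ref{lm_s_con} yields
\begin{equation*}
\sum_{t=1}^{T}\mathbb{E}[\|\bm{x}_t-\tilde{\bm{x}}_t\|^2] \leq \frac{40\alpha^2 v_{\min}^{-1}}{(1-\rho_A^2)^2}\Bigl[2n\alpha^2 N_2\sum_{t=1}^{T}\mathbb{E}[\|\nabla f(\bar{x}_t)\|^2] + 2nTN_3\sigma^2 + 2N_4\Delta\Bigr] + \frac{2}{1-\rho_A^2}\Delta_1.
\end{equation*}
Dividing by $nT$, plugging in the Theorem~\ref{The_1} estimate for the $\|\nabla f(\bar{x}_t)\|^2$ sum, using $\Delta_1 \leq \Delta$, and adding the Theorem~\ref{The_1} bound for the gradient-norm part produces exactly the explicit inequality stated in the corollary. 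The passage to the final $O(\cdot)$ bound is immediate: the chosen parameter conditions make $\alpha(v_{\max}^{-1/2}-\alpha N_2')$ a strictly positive constant independent of $T$ and $\sigma$, so each prefactor is $O(1)$, and the three non-trivial scales on the right-hand side collapse into the $(f(\bar{x}_1)-f^*+\Delta)/T$ and $\sigma^2$ terms.

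The only real obstacle is bookkeeping: keeping track of the algebraic identities of $N_2, N_3, N_4, N_2', N_3', N_4'$ when substituting, and verifying that the initial-condition term $\Delta_1$ from Lemma~\ref{lm_x} can indeed be absorbed into the overall $\Delta = \Delta_1+\Delta_2+\Delta_3$ appearing in Lemma~\ref{lm_s_con}. There is no additional analytic machinery needed beyond what Lemmas~\ref{lm_x}, \ref{lm_s_con} and Theorem~\ref{The_1} already provide.
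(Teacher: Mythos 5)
Your proposal matches the paper's own proof: the paper likewise reuses the intermediate bound (\ref{The_inequatly_2}) from Theorem~\ref{The_1} for the gradient-norm part, then substitutes it into Lemma~\ref{lm_s_con} to bound $\frac{1}{nT}\sum_t\mathbb{E}\|\bm{s}_t-\tilde{\bm{s}}_t\|^2$, and finally feeds that into Lemma~\ref{lm_x} to control the consensus error, absorbing $\Delta_1$ into $\Delta$. The approach and the chaining order are essentially identical, so no further comparison is needed.
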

\begin{proof}
			As we have already shown in Theorem~\ref{The_1} that $\frac{1}{T}\sum_{t=1}^{T} \mathbb{E} [\|\nabla f(\bar x_t)\|^2] \leq O \big( \frac{f(\bar{x}_1) - f^* + \Delta}{T} + \sigma^2\big)$, we then focus on the term $\frac{1}{nT}\sum_{t=1}^{T} \mathbb{E} [\sum_{i=1}^{n}\|x_{t,i} - \bar{x}_t\|^2]$. 
			
			By applying Lemmas~\ref{lm_x} and \ref{lm_s_con}, it is easy to prove that both $\sum_{t=1}^{T} \mathbb{E}\big[\|\bm{s}_{t+1} - \tilde{\bm{s}}_{t+1}\|^2\big]$ and $\sum_{t=1}^{T} \mathbb{E}\big[\|\bm{x}_{t+1} - \tilde{\bm{x}}_{t+1}\|^2\big]$ are of the order $O \big( \frac{f(\bar{x}_1) - f^* + \Delta}{T} + \sigma^2\big)$. 
			The detailed proof is provided in the Appendix~D.
\end{proof}
	
	\begin{Remark}
		As shown in Corollary~\ref{Col_1}, when the variance of the stochastic gradient is upper bounded by $\sigma^2$, the optimality gap of our proposed algorithm is in the order of $O(\frac{f(\bar{x}_1) - f^* + \Delta}{T} + \sigma^2)$, which is consistent with the one in centralized adaptive gradient algorithm~\cite{zaheer2018adaptive} for non-convex stochastic optimization with $L$-smooth cost functions. Note that the optimality gap will converge to some constant determined by the variance of the stochastic gradient. To mitigate the impact of the variance on the optimality gap, some variance reduction method~\cite{sun2020improving,jiang2021distributed,xin2020variance} can be further taken into consideration. 
	\end{Remark}
	
	\begin{Remark}
		Distributed optimization algorithms with adaptive stepsizes were developed in~\cite{zhang2021distributed} and~\cite{shen2020distributed} for convex optimization problems, in which the local gradients are used in the momentum-gradient descent. 
		Since the heterogeneity between different local cost functions sometimes leads to the disagreement on the momentum gradients as well as the adaptive stepsizes of different nodes, the algorithms in~\cite{zhang2021distributed} and~\cite{shen2020distributed} implement diminishing stepsizes to handle such disagreement. 
		However, the effect of gradient descent will become weaker as the stepsizes decay, which may affect the convergence performance.
		
		To mitigate the influence of heterogeneity, in our Algorithm~\ref{alg} we employ a GT estimator $\bm{s}_{t}$ to track the average stochastic gradient~$\tilde{\bm{s}}_{t}$. 
		It is proved that the convergence of the norm~$\|\bm{s}_{t} - \tilde{\bm{s}}_{t}\|^2$ will result in the convergence of the consensus errors of the local estimate~$\|\bm{x}_{t} - \tilde{\bm{x}}_{t}\|^2$ and the momentum vector~$\|\bm{m}_{t} - \tilde{\bm{m}}_{t}\|^2$, as shown in Lemmas~\ref{lm_mcon} and~\ref{lm_x}, respectively.
		Moreover, our Lemma~\ref{lm_v_con} further highlights that a smaller value of~$\|\bm{s}_{t} - \tilde{\bm{s}}_{t}\|^2$ also helps to reduce the disagreement on the adaptive vector~$\|\bm{v}_{t} - \tilde{\bm{v}}_{t}\|^2$, which plays a vital role in the stationarity analysis for our considered distributed stochastic non-convex optimization problem. 
		As a result, the utilization of the GT estimator enables our Algorithm~\ref{alg} to achieve a stationary performance even in the presence of heterogeneity, as illustrated in Corollary~\ref{Col_1}.
	\end{Remark}
		
	\section{Numerical examples}\label{sec_numerical}
	In this section, the performance of our proposed Algorithm~\ref{alg} is verified through numerical examples.
	
	\subsection{Distributed state estimation problem}
	
		We first consider a robust linear regression problem~\cite{yang2020graduated} based on the Huber loss function $H_{\varsigma}(\cdot) : \mathbb{R} \rightarrow \mathbb{R}^+$, defined by 
	\begin{equation}
		H_{\varsigma}(z)= 
		\begin{cases}\frac{1}{2} z^2, & \text { if }|z| \leq \varsigma, \\
			\varsigma\left(|z|-\frac{1}{2} \varsigma\right), & \text { otherwise }\end{cases}
	\end{equation}
	with scalar $\varsigma > 0$.
	
	In this example, the nodes in a network are to minimize the optimization problem given by~(\ref{problem}) with Huber-type local cost function $f_{i}:\mathbb{R}^m\rightarrow \mathbb{R}^{+}$ as follows~\cite{ghaderyan2023fast}:
	\begin{equation}
		f_{i}(x) = \mathbb{E} \left[ {\boldsymbol{H}}_{\varsigma} \left(\theta_i - \Phi_i x \right) \right],
	\end{equation}
	where the matrix $\Phi_i\in \mathbb{R}^{d \times d}$ and $\theta_i\in \mathbb{R}^{d}$ represents the estimate target, while the function $\boldsymbol{H}_{\varsigma}: \mathbb{R}^m \rightarrow \mathbb{R}^m$ takes the Huber loss on each entry of vector $z = [z_j]_{j=1}^m$, i.e.,  $\boldsymbol{H}_{\varsigma}(\boldsymbol{z})=\left[H_{\varsigma}\left(z_j\right)\right]_{j=1}^m$.
	
	We consider the problem setup with $n = 16$, $d = 10$, and use a randomly generated Erdős–Rényi (ER) graph $\mathcal{G}$ with probability $0.7$.
	We set the parameters of our proposed Algorithm~\ref{alg} as $\alpha = 0.01$, $\beta_1 = 0.9$, $\beta_2 = 0.999$, $v_{\rm max} = 100$ and $v_{\rm min} = 10^{-8}$. 
	For comparison, we consider the DSGD~\cite{assran2019stochastic}, distributed GT~\cite{zhang2019decentralized}, Momentum DSGD~\cite{yuan2021decentlam} and the distributed adaptive gradient algorithms proposed in~\cite{zhang2021distributed,shen2020distributed}.
	The constant stepsize in DSGD, distributed GT, Momentum DSGD (with $\beta_1 = 0.9$) is set as~$\alpha = 0.01$, while the diminishing stepsizes in adaptive gradient algorithms~\cite{zhang2021distributed,shen2020distributed} (with $\beta_1 = 0.9, \beta_2 = 0.999$) decay in the form of $\alpha_t = \frac{100 \alpha}{100 + \sqrt{t}}$.
	
	Fig.~\ref{fig_huber} illustrates the evolutions of the loss functions and the optimality gaps over $T = 2 \times 10^4$ iterations across $50$ random seeds. In the example, the optimal solution $x^*$ is randomly generated from $[-1, 1]^d$, and each matrix~$\Phi_i$ is generated with eigenvalues ranging in $[0.05, 1]$. Moreover, the noisy estimate target is generated following $\theta_i(t) = \Phi_i x^* + \eta_{t,i}$ with $\eta_{t,i}$ a random vector whose entries are sampled from truncated Gaussian distribution~\cite{chopin2011fast} with variance $0.04$ and truncated threshold~$0.1$. 
	The results demonstrate the superior performance of our proposed algorithm in terms of both the decay of the loss function and the optimality gap.
	We can observe that the DSGD and the momentum DSGD algorithms exhibit a fast rate at the beginning, however, these two algorithms converge to larger values of the loss function and the optimality gap as the iteration number increases. 
	On the other hand, the distributed adaptive gradient algorithms proposed in~\cite{zhang2021distributed} and~\cite{shen2020distributed} show a slower convergence due to their diminishing stepsizes.
	On the contrary, our Algorithm~1 exhibits a superior stationarity performance, displaying relatively smaller values of the loss function and the optimality gap as the iteration increases.

	\begin{figure}[!htb] 
		\centering
		\begin{subfigure}{0.24\textwidth}
			\centering   
			\includegraphics[width=1.0\linewidth,trim=100 250 120 280,clip]{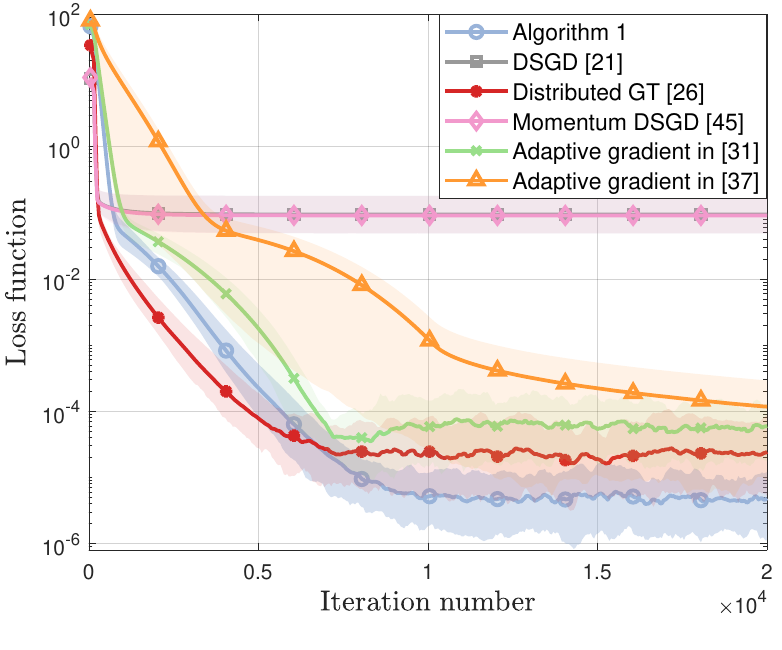}
			\caption{Loss functions}
			\label{fig_huber:loss}
		\end{subfigure} 
		\begin{subfigure}{0.24\textwidth}
			\centering   
			\includegraphics[width=1.0\linewidth,trim=100 250 120 280,clip]{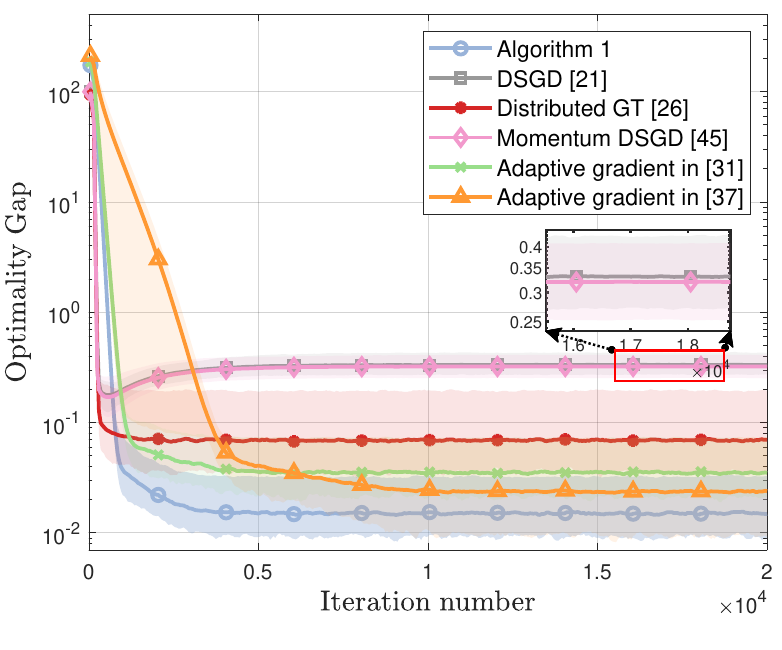}
			\caption{Optimality gaps}
			\label{fig_huber:gap}
		\end{subfigure}
		\caption{ Comparison of different algorithms on distributed robust linear regression problems. 
			Solid curves and shaded regions represent the average value and range statistics, respectively.}
		\label{fig_huber}
	\end{figure}	
	
	\subsection{Distributed logistic regression problem} \label{subsec_example1}
	Consider the logistic regression problem commonly arises in parameter estimation and machine learning. We present the examples based on three real-world datasets, i.e., the a9a, the Covertype, and the MNIST datasets\footnote[1]{The datasets are available at https://www.openml.org/}, where the data pieces belonging to~$c$ classes are distributed among~$n$ nodes.
	Denote the sampled data assigned to node $i$ as ${(y_{i,s}, l_{i,s}), s = 1,\dots, m}$, where $y_{i,s} \in \mathbb{R}^d$ represents the features and $l_{i,s} = [l_{i,s}^1 ,\dots , l_{i,s}^c] \in \{0,1\} \times \cdots \times \{0,1\}$ represents the label of the $s$-th sample at node $i$ using one-hot encoding. Consequently, the cost function of node $i$ can be expressed as follows: 
	\begin{equation} \label{simulation}
		\begin{split}
			f_{i}(W) \!=\! &- \! \frac{1}{m}\sum_{j=1}^{m}\sum_{k=1}^{c} l_{i,j}^{k} \log \! \Bigg( \frac{\exp(w_{
					k}' y_{i,j})}{\sum_{\hat{k}=1}^{c}\exp(w_{\hat{k}}' y_{i,j})} \Bigg) \!+\! h(W),
		\end{split}
	\end{equation}
	where $W = \big[w_1,\dots,w_{c}\big]' \in \mathbb{R}^{c\times d}$ is the weight of the logistic regression model to be optimized and $h(W) = \sum_{j_1 \leq c, j_2 \leq d} \frac{ 0.01 ([w_{j_1}]_{j_2})^2 }{1 + ([w_{j_1}]_{j_2})^2}$ is the non-convex Geman-McClure regularize function.
	
	\begin{figure}[!htb] 
		\centering
		\begin{subfigure}{0.24\textwidth}
			\centering   
			\includegraphics[width=1.0\linewidth,trim=100 250 110 280,clip]{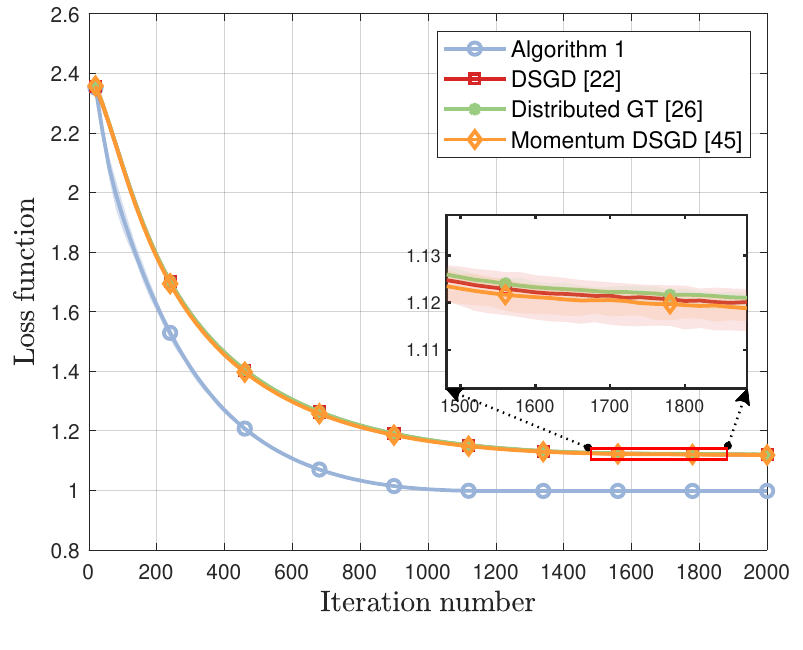}
			\caption{Loss functions}
			\label{fig_a9a:loss}
		\end{subfigure} 
		\begin{subfigure}{0.24\textwidth}
			\centering   
			\includegraphics[width=1.0\linewidth,trim=100 250 110 280,clip]{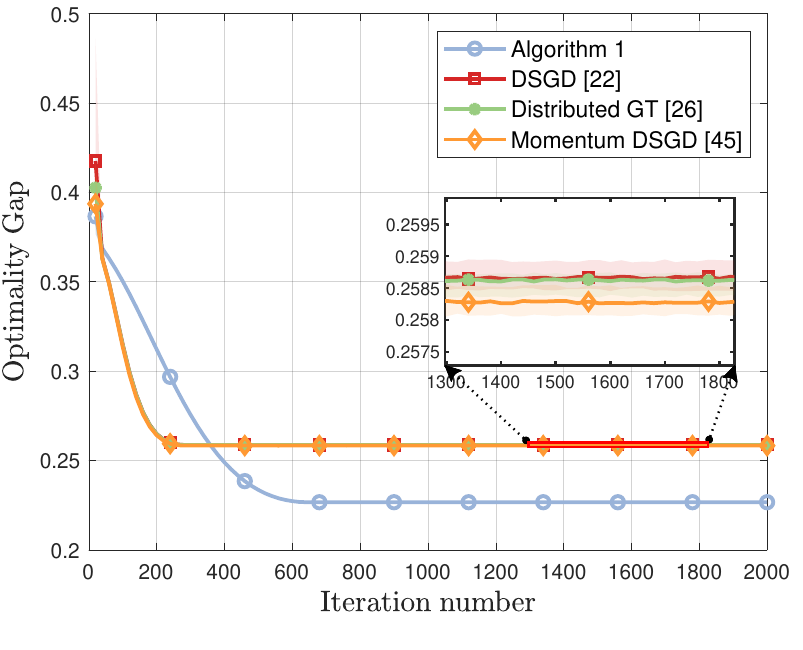}
			\caption{Optimality gaps}
			\label{fig_a9a:gap}
		\end{subfigure}
		\caption{ Comparison of different algorithms on distributed logistic regression on a9a dataset. }
		\label{fig_a9a}
	\end{figure}
	\begin{figure}[!htb] 
		\centering
		\begin{subfigure}{0.24\textwidth}
			\centering   
			\includegraphics[width=1.0\linewidth,trim=100 250 110 280,clip]{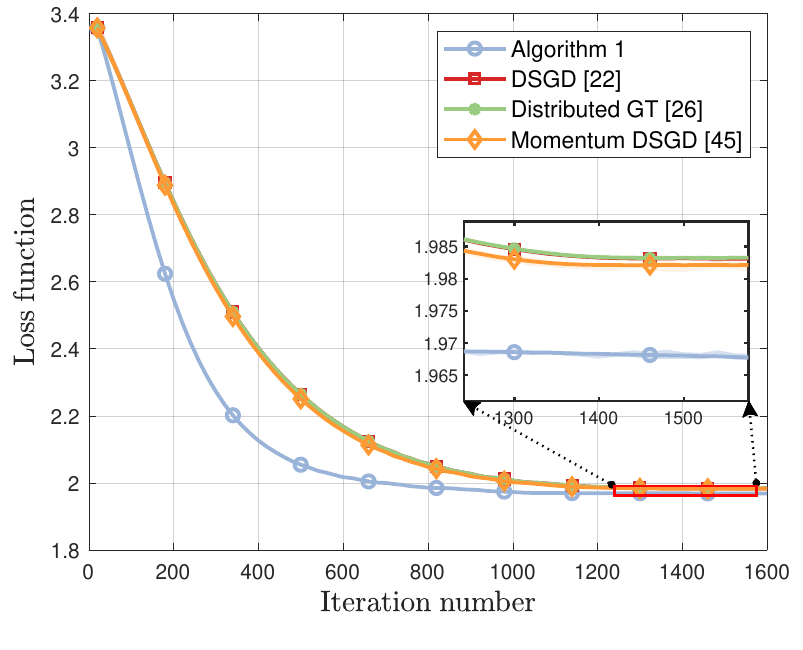}
			\caption{Loss functions}
			\label{fig_covertype:loss}
		\end{subfigure} 
		\begin{subfigure}{0.24\textwidth}
			\centering   
			\includegraphics[width=1.0\linewidth,trim=100 250 110 280,clip]{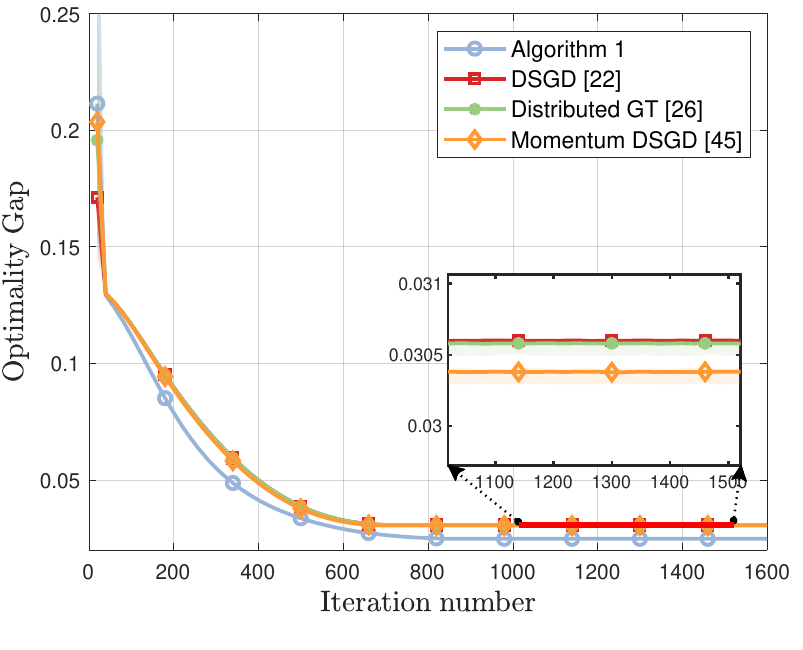}
			\caption{Optimality gaps}
			\label{fig_covertype:gap}
		\end{subfigure}
		\caption{ Comparison of different algorithms on distributed logistic regression on Covertype dataset. }
		\label{fig_covertype}
	\end{figure}	
	\begin{figure}[!htb] 
		\centering
		\begin{subfigure}{0.24\textwidth}
			\centering   
			\includegraphics[width=1.0\linewidth,trim=100 250 110 280,clip]{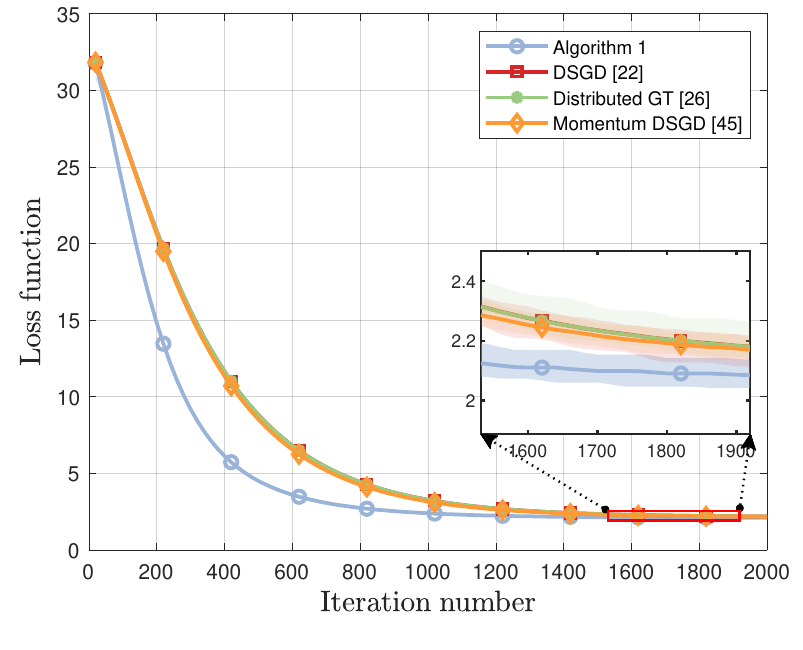}
			\caption{Loss functions}
			\label{fig_mnist:loss}
		\end{subfigure} 
		\begin{subfigure}{0.24\textwidth}
			\centering   
			\includegraphics[width=1.0\linewidth,trim=100 250 110 280,clip]{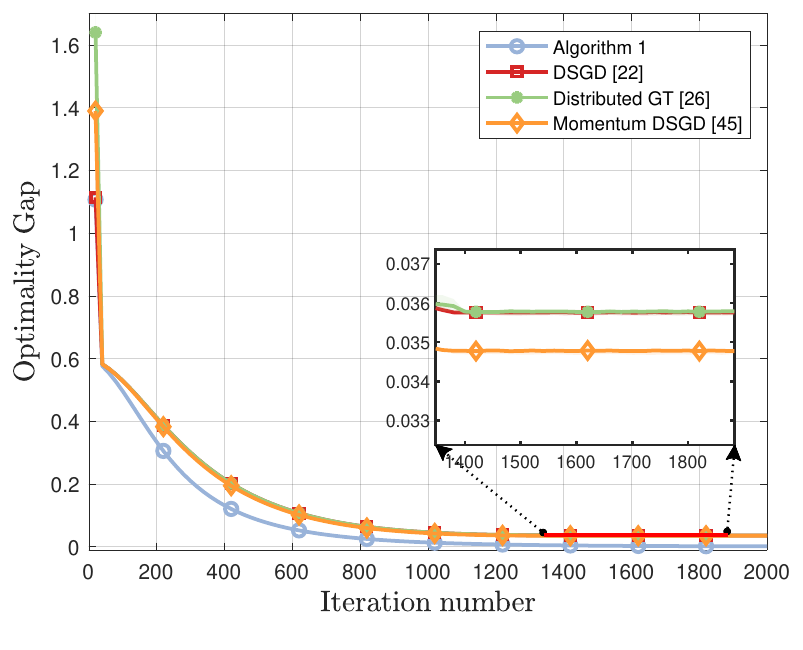}
			\caption{Optimality gaps}
			\label{fig_mnist:gap}
		\end{subfigure}
		\caption{ Comparison of different algorithms on distributed logistic regression on MNIST dataset. }
		\label{fig_mnist}
	\end{figure}	
	
	The datasets are evenly assigned to $n = 16$ nodes that can communicate through an ER graph. 
	The batch size and the iteration numbers are set to be $32$ and $2000$, respectively. 
	We set the parameters as $\alpha = 0.001$, $\beta_1 = 0.9$, $\beta_2 = 0.999$, $v_{\rm max} = 100$ and $v_{\rm min} = 10^{-8}$ for our Algorithm~\ref{alg}. 
	We also compare our algorithm with some state-of-the-art distributed non-convex optimization algorithms, including DSGD, distributed GT, and Momentum DSGD (with stepsizes $\alpha=0.1$).
	Numerical results conducted on the a9a, the Covtype and the MNIST datasets are presented in Figures \ref{fig_a9a}-\ref{fig_mnist}, respectively, with the number of random seeds set to $10$. 
	It is shown that our Algorithm~1 demonstrates the best performance in terms of the training loss and the optimality gap in these three distributed logistic regression tasks with non-convex regularizer.

	\section{Conclusion}
	In this paper, we investigated a distributed adaptive gradient algorithm for stochastic non-convex optimization problems with $L$-smooth cost functions, where a GT estimator is employed to handle the heterogeneity between different local cost functions, and the stepsizes are adjusted adaptively to enhance the performance of our algorithm when the gradients are sparse. 
	We analyzed the first-order stationarity performance of our proposed algorithm in terms of the optimality gap. 
	It is shown that an upper bound of the optimality gap is of the order $O(1/T + \sigma^2)$, which aligns with the one observed in centralized adaptive gradient algorithm~\cite{zaheer2018adaptive}. 
	The effectiveness of our proposed algorithm was validated through numerical examples. 
	Our future work includes research on distributed adaptive gradient algorithms with variance reduction and compressed communication for non-convex problems. 
	
	\bibliography{GTAdam}

\begin{thebibliography}{10}

\bibitem{patterson2014distributed}
S.~Patterson, Y.~C. Eldar, and I.~Keidar, ``Distributed compressed sensing for
  static and time-varying networks,'' {\em IEEE Transactions on Signal
  Processing}, vol.~62, no.~19, pp.~4931--4946, 2014.

\bibitem{deng2009imagenet}
D.~Jia, D.~Wei, S.~Richard, L.~Li, K.~Li, and F.~Li, ``Imagenet: A large-scale
  hierarchical image database,'' in {\em Proceedings of IEEE Conference on
  Computer Vision and Pattern Recognition}, pp.~248--255, 2009.

\bibitem{jiang2022fully}
X.~Jiang, X.~Zeng, J.~Sun, J.~Chen, and Y.~Wei, ``A fully distributed hybrid
  control framework for non-differentiable multi-agent optimization,'' {\em
  IEEE/CAA Journal of Automatica Sinica}, vol.~9, no.~10, pp.~1792--1800, 2022.

\bibitem{sun2020improving}
H.~Sun, S.~Lu, and M.~Hong, ``Improving the sample and communication complexity
  for decentralized non-convex optimization: Joint gradient estimation and
  tracking,'' in {\em Proceedings of International Conference on Machine
  Learning}, pp.~9217--9228, 2020.

\bibitem{nedic2009distributed}
A.~Nedic and A.~Ozdaglar, ``Distributed subgradient methods for multi-agent
  optimization,'' {\em IEEE Transactions on Automatic Control}, vol.~54, no.~1,
  pp.~48--61, 2009.

\bibitem{nedic2014distributed}
A.~Nedi{\'c} and A.~Olshevsky, ``Distributed optimization over time-varying
  directed graphs,'' {\em IEEE Transactions on Automatic Control}, vol.~60,
  no.~3, pp.~601--615, 2014.

\bibitem{zhang2019asyspa}
J.~Zhang and K.~You, ``Asy{SPA}: An exact asynchronous algorithm for convex
  optimization over digraphs,'' {\em IEEE Transactions on Automatic Control},
  vol.~65, no.~6, pp.~2494--2509, 2019.

\bibitem{lu2020privacy}
Q.~L{\"u}, X.~Liao, T.~Xiang, H.~Li, and T.~Huang, ``Privacy masking stochastic
  subgradient-push algorithm for distributed online optimization,'' {\em IEEE
  Transactions on Cybernetics}, vol.~51, no.~6, pp.~3224--3237, 2020.

\bibitem{zhu2011distributed}
M.~Zhu and S.~Mart{\'\i}nez, ``On distributed convex optimization under
  inequality and equality constraints,'' {\em IEEE Transactions on Automatic
  Control}, vol.~57, no.~1, pp.~151--164, 2011.

\bibitem{li2020distributed}
X.~Li, X.~Yi, and L.~Xie, ``Distributed online optimization for multi-agent
  networks with coupled inequality constraints,'' {\em IEEE Transactions on
  Automatic Control}, vol.~66, no.~8, pp.~3575--3591, 2021.

\bibitem{duchi2011dual}
J.~C. Duchi, A.~Agarwal, and M.~J. Wainwright, ``Dual averaging for distributed
  optimization: Convergence analysis and network scaling,'' {\em IEEE
  Transactions on Automatic Control}, vol.~57, no.~3, pp.~592--606, 2011.

\bibitem{han2021privacy}
D.~Han, K.~Liu, H.~Sandberg, S.~Chai, and Y.~Xia, ``Privacy-preserving dual
  averaging with arbitrary initial conditions for distributed optimization,''
  {\em IEEE Transactions on Automatic Control}, vol.~67, no.~6, pp.~3172--3179,
  2022.

\bibitem{yuan2020distributed}
D.~Yuan, Y.~Hong, D.~W. Ho, and S.~Xu, ``Distributed mirror descent for online
  composite optimization,'' {\em IEEE Transactions on Automatic Control},
  vol.~66, no.~2, pp.~714--729, 2020.

\bibitem{yi2020distributedmirror}
X.~Yi, X.~Li, L.~Xie, and K.~H. Johansson, ``Distributed online convex
  optimization with time-varying coupled inequality constraints,'' {\em IEEE
  Transactions on Signal Processing}, vol.~68, pp.~731--746, 2020.

\bibitem{shi2015proximal}
W.~Shi, Q.~Ling, G.~Wu, and W.~Yin, ``A proximal gradient algorithm for
  decentralized composite optimization,'' {\em IEEE Transactions on Signal
  Processing}, vol.~63, no.~22, pp.~6013--6023, 2015.

\bibitem{dixit2020online}
R.~Dixit, A.~S. Bedi, and K.~Rajawat, ``Online learning over dynamic graphs via
  distributed proximal gradient algorithm,'' {\em IEEE Transactions on
  Automatic Control}, vol.~66, no.~11, pp.~5065--5079, 2020.

\bibitem{cui2020multicomposite}
Y.~Cui, Z.~He, and J.-S. Pang, ``Multicomposite nonconvex optimization for
  training deep neural networks,'' {\em SIAM Journal on Optimization}, vol.~30,
  no.~2, pp.~1693--1723, 2020.

\bibitem{breitenmoser2010voronoi}
A.~Breitenmoser, M.~Schwager, J.-C. Metzger, R.~Siegwart, and D.~Rus, ``Voronoi
  coverage of non-convex environments with a group of networked robots,'' in
  {\em Proceedings of IEEE International Conference on Robotics and
  Automation}, pp.~4982--4989, 2010.

\bibitem{jiang2021distributed}
X.~Jiang, X.~Zeng, J.~Sun, and J.~Chen, ``Distributed stochastic gradient
  tracking algorithm with variance reduction for non-convex optimization,''
  {\em IEEE Transactions on Neural Networks and Learning Systems}, vol.~34,
  no.~9, pp.~5310--5321, 2023.

\bibitem{tatarenko2017non}
T.~Tatarenko and B.~Touri, ``Non-convex distributed optimization,'' {\em IEEE
  Transactions on Automatic Control}, vol.~62, no.~8, pp.~3744--3757, 2017.

\bibitem{assran2019stochastic}
M.~Assran, N.~Loizou, N.~Ballas, and M.~Rabbat, ``Stochastic gradient push for
  distributed deep learning,'' in {\em Proceedings of International Conference
  on Machine Learning}, pp.~344--353, 2019.

\bibitem{koloskova2020unified}
A.~Koloskova, N.~Loizou, S.~Boreiri, M.~Jaggi, and S.~Stich, ``A unified theory
  of decentralized {SGD} with changing topology and local updates,'' in {\em
  Proceedings of International Conference on Machine Learning}, pp.~5381--5393,
  2020.

\bibitem{lian18Asynchronous}
X.~Lian, W.~Zhang, C.~Zhang, and J.~Liu, ``Asynchronous decentralized parallel
  stochastic gradient descent,'' in {\em Proceedings of International
  Conference on Machine Learning}, pp.~3043--3052, 2018.

\bibitem{alghunaim2021unified}
S.~A. Alghunaim and K.~Yuan, ``A unified and refined convergence analysis for
  non-convex decentralized learning,'' {\em IEEE Transactions on Signal
  Processing}, vol.~70, pp.~3264--3279, 2022.

\bibitem{tziotis2020second}
I.~Tziotis, C.~Caramanis, and A.~Mokhtari, ``Second order optimality in
  decentralized non-convex optimization via perturbed gradient tracking,'' in
  {\em Proceedings of Advances in Neural Information Processing Systems},
  vol.~33, pp.~21162--21173, 2020.

\bibitem{zhang2019decentralized}
J.~Zhang and K.~You, ``Decentralized stochastic gradient tracking for
  non-convex empirical risk minimization,'' {\em arXiv: 1909.02712}, 2019.

\bibitem{tang2020distributed}
Y.~Tang, J.~Zhang, and N.~Li, ``Distributed zero-order algorithms for nonconvex
  multiagent optimization,'' {\em IEEE Transactions on Control of Network
  Systems}, vol.~8, no.~1, pp.~269--281, 2020.

\bibitem{kungurtsev2021decentralized}
V.~Kungurtsev, M.~Morafah, T.~Javidi, and G.~Scutari, ``Decentralized
  asynchronous non-convex stochastic optimization on directed graphs,'' {\em
  IEEE Transactions on Control of Network Systems}, vol.~10, no.~4,
  pp.~1796--1804, 2023.

\bibitem{xin2020variance}
R.~Xin, U.~A. Khan, and S.~Kar, ``Variance-reduced decentralized stochastic
  optimization with accelerated convergence,'' {\em IEEE Transactions on Signal
  Processing}, vol.~68, pp.~6255--6271, 2020.

\bibitem{chen2021cada}
T.~Chen, Z.~Guo, Y.~Sun, and W.~Yin, ``{CADA}: Communication-adaptive
  distributed {ADAM},'' in {\em Proceedings of International Conference on
  Artificial Intelligence and Statistics}, pp.~613--621, 2021.

\bibitem{zhang2021distributed}
M.~Zhang, B.~Hao, Q.~Ge, J.~Zhu, R.~Zheng, and Q.~Wu, ``Distributed adaptive
  subgradient algorithms for online learning over time-varying networks,'' {\em
  IEEE Transactions on Systems, Man, and Cybernetics: Systems}, vol.~52, no.~7,
  pp.~4518--4529, 2021.

\bibitem{duchi2011adaptive}
J.~Duchi, E.~Hazan, and Y.~Singer, ``Adaptive subgradient methods for online
  learning and stochastic optimization,'' {\em Journal of Machine Learning
  Research}, vol.~12, no.~7, pp.~2121--2159, 2011.

\bibitem{zeiler2012adadelta}
M.~D. Zeiler, ``{ADADELTA}: an adaptive learning rate method,'' {\em arXiv:
  1212.5701}, 2012.

\bibitem{reddi2019convergence}
S.~J. Reddi, S.~Kale, and S.~Kumar, ``On the convergence of {ADAM} and
  beyond,'' in {\em Proceedings of International Conference on Learning
  Representations}, 2018.

\bibitem{kingma2014adam}
D.~P. Kingma and J.~Ba, ``{ADAM}: A method for stochastic optimization,'' in
  {\em Proceedings of International Conference on Learning Representations},
  2015.

\bibitem{chen2018convergence}
``On the convergence of a class of {ADAM}-type algorithms for non-convex
  optimization,'' in {\em Proceedings of International Conference on Learning
  Representations}, 2019.

\bibitem{shen2020distributed}
X.~Shen, D.~Li, R.~Fang, Y.~Zhou, and X.~Wu, ``Distributed adaptive online
  learning for convex optimization with weight decay,'' {\em Asian Journal of
  Control}, vol.~24, no.~2, pp.~562--575, 2022.

\bibitem{carnevale2020distributed}
G.~Carnevale, F.~Farina, I.~Notarnicola, and G.~Notarstefano, ``{GTAda}m:
  Gradient tracking with adaptive momentum for distributed online
  optimization,'' {\em IEEE Transactions on Control of Network Systems},
  vol.~10, no.~3, pp.~1436--1448, 2023.

\bibitem{zaheer2018adaptive}
M.~Zaheer, S.~Reddi, D.~Sachan, S.~Kale, and S.~Kumar, ``Adaptive methods for
  nonconvex optimization,'' in {\em Proceedings of Advances in Neural
  Information Processing Systems}, vol.~31, pp.~9793--9803, 2018.

\bibitem{ward2019adagrad}
R.~Ward, X.~Wu, and L.~Bottou, ``Adagrad stepsizes: Sharp convergence over
  nonconvex landscapes,'' in {\em Proceedings of International Conference on
  Machine Learning}, pp.~6677--6686, 2019.

\bibitem{pmlr-v89-li19c}
X.~Li and F.~Orabona, ``On the convergence of stochastic gradient descent with
  adaptive stepsizes,'' in {\em Proceedings of the 22nd International
  Conference on Artificial Intelligence and Statistics}, vol.~89, pp.~983--992,
  2019.

\bibitem{yang2020graduated}
H.~Yang, P.~Antonante, V.~Tzoumas, and L.~Carlone, ``Graduated non-convexity
  for robust spatial perception: From non-minimal solvers to global outlier
  rejection,'' {\em IEEE Robotics and Automation Letters}, vol.~5, no.~2,
  pp.~1127--1134, 2020.

\bibitem{nedic2017achieving}
A.~Nedic, A.~Olshevsky, and W.~Shi, ``Achieving geometric convergence for
  distributed optimization over time-varying graphs,'' {\em SIAM Journal on
  Optimization}, vol.~27, no.~4, pp.~2597--2633, 2017.

\bibitem{ghaderyan2023fast}
D.~Ghaderyan, N.~S. Aybat, A.~P. Aguiar, and F.~L. Pereira, ``A fast
  row-stochastic decentralized method for distributed optimization over
  directed graphs,'' {\em IEEE Transactions on Automatic Control}, vol.~69,
  no.~1, pp.~275--289, 2024.

\bibitem{yuan2021decentlam}
K.~Yuan, Y.~Chen, X.~Huang, Y.~Zhang, P.~Pan, Y.~Xu, and W.~Yin,
  ``Decent{L}a{M}: Decentralized momentum {SGD} for large-batch deep
  training,'' in {\em Proceedings of the IEEE/CVF International Conference on
  Computer Vision}, pp.~3029--3039, 2021.

\bibitem{chopin2011fast}
N.~Chopin, ``Fast simulation of truncated gaussian distributions,'' {\em
  Statistics and Computing}, vol.~21, pp.~275--288, 2011.

\bibitem{xin2021stochastic}
R.~Xin, S.~Das, U.~A. Khan, and S.~Kar, ``A stochastic proximal gradient
  framework for decentralized non-convex composite optimization:
  Topology-independent sample complexity and communication efficiency,'' {\em
  arXiv:2110.01594}, 2021.

\bibitem{varga2010gervsgorin}
R.~S. Varga, {\em Ger{\v{s}}gorin and his circles}, vol.~36.
\newblock Springer Science \& Business Media, 2010.

\end{thebibliography}
	
	\section*{Appendix A: Auxiliary lemmas}
	This section presents some auxiliary lemmas that will be used in the proofs of Lemmas~1-6.
	\begin{Lemma}\label{lm_sqr}
		Consider two vectors $a,b\in\mathbb{R}^d$. The following inequality
		$$\|a+b\|^2 \leq (1+\theta)\|a\|^2 + (1+1/\theta)\|b\|^2$$
		holds for any $\theta>0$.
	\end{Lemma}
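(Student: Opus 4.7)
The plan is to prove this classical Peter--Paul (Young-type) inequality by expanding the square on the left-hand side and then controlling the cross term via an elementary completion-of-the-square argument. Specifically, I would first write
\begin{equation*}
\|a+b\|^2 = \|a\|^2 + 2\langle a,b\rangle + \|b\|^2,
\end{equation*}
so that everything reduces to bounding the inner product $2\langle a,b\rangle$ from above by an expression of the form $\theta\|a\|^2 + \frac{1}{\theta}\|b\|^2$.

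To obtain that bound, I would start from the trivial non-negativity of a square: for any $\theta>0$, the vector $\sqrt{\theta}\,a - \tfrac{1}{\sqrt{\theta}}\,b$ has non-negative squared norm, i.e.
\begin{equation*}
0 \le \bigl\| \sqrt{\theta}\,a - \tfrac{1}{\sqrt{\theta}}\,b\bigr\|^2 = \theta\|a\|^2 - 2\langle a,b\rangle + \tfrac{1}{\theta}\|b\|^2.
\end{equation*}
Rearranging gives $2\langle a,b\rangle \le \theta\|a\|^2 + \tfrac{1}{\theta}\|b\|^2$. Substituting this into the expansion of $\|a+b\|^2$ and collecting the coefficients of $\|a\|^2$ and $\|b\|^2$ yields exactly $(1+\theta)\|a\|^2 + (1+1/\theta)\|b\|^2$, which is the claim.

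There is no real obstacle here: the lemma is a standard consequence of the arithmetic--geometric mean/Young inequality applied in a Hilbert space, and the entire argument fits in a few lines. The only thing to be careful about is that $\theta>0$ is required so that both $\sqrt{\theta}$ and $1/\sqrt{\theta}$ are well-defined, which is exactly the hypothesis stated in the lemma. No additional assumptions on $a,b$ (such as non-zero) are needed, since the inequality is trivially an equality when one of the vectors vanishes.
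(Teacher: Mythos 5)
Your proof is correct and follows essentially the same route as the paper: expand $\|a+b\|^2$ and bound the cross term $2\langle a,b\rangle$ by $\theta\|a\|^2 + \tfrac{1}{\theta}\|b\|^2$ via Young's inequality. The only difference is that you derive that bound explicitly from the non-negativity of $\bigl\|\sqrt{\theta}\,a - \tfrac{1}{\sqrt{\theta}}\,b\bigr\|^2$, whereas the paper simply invokes Young's inequality; your version is marginally more self-contained but substantively identical.
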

	\begin{proof}
		By Young's inequality, one has
		\begin{equation*}
			\|a+b\|^2 = \|a\|^2 + 2\langle a,b \rangle + \|b\|^2 \leq (1+\theta)\|a\|^2 + (1+1/\theta)\|b\|^2.
		\end{equation*}
		This completes the proof.
	\end{proof}	
	
	\begin{Lemma} \cite{xin2021stochastic} \label{ratio_sum} 
		Let $\{a_t\}$ and $\{b_t \}$ be scalar sequences such that
		$$a_{t+1} \leq q a_t + b_t$$
		with $ q \in (0,1)$. Then, for $T \geq 1$ we have
		$$		\sum_{t=1}^T a_t \leq \frac{1}{1-q} a_1+\frac{1}{1-q} \sum_{t=1}^{T} b_t	.	$$
	\end{Lemma}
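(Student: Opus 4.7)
The plan is to prove this by directly summing the recursive inequality and reindexing, exploiting a telescoping that leaves only boundary terms. The quantities to which this lemma is later applied (consensus errors, squared norms) are all nonnegative, and the statement implicitly presumes $a_t \geq 0$; I will invoke this to discard the $a_{T+1}$ term that appears on the good side of the inequality.

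First I would sum the hypothesis $a_{t+1} \leq q a_t + b_t$ over $t = 1, \dots, T$ to obtain
\begin{equation*}
\sum_{t=1}^{T} a_{t+1} \;\leq\; q \sum_{t=1}^{T} a_t \;+\; \sum_{t=1}^{T} b_t .
\end{equation*}
Next I would reindex the left-hand side as $\sum_{t=2}^{T+1} a_t = \sum_{t=1}^{T} a_t - a_1 + a_{T+1}$, so that, writing $S_T := \sum_{t=1}^{T} a_t$, the inequality becomes
\begin{equation*}
S_T - a_1 + a_{T+1} \;\leq\; q S_T + \sum_{t=1}^{T} b_t ,
\end{equation*}
which rearranges to $(1-q)\, S_T \leq a_1 - a_{T+1} + \sum_{t=1}^{T} b_t$. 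Since $q \in (0,1)$, dividing by $1-q$ and dropping the nonnegative term $a_{T+1}/(1-q)$ on the right-hand side yields the claim.

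The main obstacle is essentially bookkeeping rather than mathematical: one must be careful that the reindexing lines up and that the sign of the $a_{T+1}$ contribution is favorable, so that discarding it preserves the inequality. The use of $a_{T+1} \geq 0$ is the only non-algebraic step, and it is justified by the intended applications within this paper, where the sequences $\{a_t\}$ represent squared norms of error quantities. No further estimates, no use of Young's inequality, and no induction are required; a single summation and one reindexing suffice.
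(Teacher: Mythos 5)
Your proof is correct. Note that the paper does not prove this lemma at all---it is imported from the cited reference \cite{xin2021stochastic}---so there is no in-paper argument to compare against; your summation-and-reindexing argument is the standard one-line proof and is exactly what one would write. The only delicate point is the boundary term, and you handle it properly: after reindexing, $a_{T+1}$ appears on the favorable side, and discarding $a_{T+1}/(1-q)$ requires $a_{T+1}\geq 0$. You are right to flag that this hypothesis is absent from the statement but implicitly assumed: for genuinely arbitrary scalar sequences the claim is false. For instance, take $q=\tfrac12$, $a_1=0$, $b_t=-1$ and $a_{t+1}=\tfrac12 a_t-1$ with equality; then $\sum_{t=1}^{T}a_t=-2T+4-2^{2-T}$, which exceeds the claimed bound $-2T$ for every $T\geq 1$. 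In all of the paper's applications (Lemmas 2--6) the sequences $a_t$ and $b_t$ are expectations of squared norms, so nonnegativity holds and your argument goes through. A minor remark in your favor: your route needs only $a_{T+1}\geq 0$, which is weaker than what the alternative unrolling proof (expanding $a_t\leq q^{t-1}a_1+\sum_{\tau=1}^{t-1}q^{t-1-\tau}b_\tau$ and summing geometric series) would demand, namely $a_1\geq 0$ and $b_t\geq 0$ for all $t$.
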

	
	\begin{Lemma} [Gershgorin circle theorem] \cite{varga2010gervsgorin} \label{lemma_G_circle}  
		Let $A \in \mathbb{R}^{n \times n}$ be a complex matrix with entries $A_{ij}$. 
		Then, for any eigenvalue~$\lambda$ of matrix $A$, there exists a positive integer~$i \in \{1, \dots, n\}$ such that 
		$$ 	\left|\lambda-A_{i i}\right| \leq \sum_{j \neq i}\left|A_{i j}\right| .$$
	\end{Lemma}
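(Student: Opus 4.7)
The plan is to prove this via the standard eigenvector-and-maximum-component argument. I will start by taking an arbitrary eigenvalue $\lambda$ of $A$ together with an associated nonzero eigenvector $x = (x_1, \dots, x_n)' \in \mathbb{C}^n$ satisfying $Ax = \lambda x$. Because $x \neq 0$, the quantity $\max_{1 \leq k \leq n} |x_k|$ is strictly positive, so I can pick an index $i$ that attains this maximum, i.e., $|x_i| = \max_k |x_k| > 0$. The choice of $i$ is the key step, and it is what lets me control the ratios $|x_j|/|x_i|$ by $1$.

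Next, I will read off the $i$-th scalar equation from $Ax = \lambda x$, namely
\begin{equation*}
\sum_{j=1}^{n} A_{ij} x_j = \lambda x_i,
\end{equation*}
and isolate the diagonal term to obtain $(\lambda - A_{ii}) x_i = \sum_{j \neq i} A_{ij} x_j$. Applying the triangle inequality to the right-hand side then yields $|\lambda - A_{ii}| \, |x_i| \leq \sum_{j \neq i} |A_{ij}| \, |x_j|$. Since $|x_j| \leq |x_i|$ for every $j$ by the choice of $i$, I can bound $|x_j|$ by $|x_i|$ inside the sum, giving $|\lambda - A_{ii}| \, |x_i| \leq |x_i| \sum_{j \neq i} |A_{ij}|$.

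Finally, I will divide both sides by $|x_i| > 0$ to conclude $|\lambda - A_{ii}| \leq \sum_{j \neq i} |A_{ij}|$, which is exactly the desired inclusion in the $i$-th Gershgorin disc. I do not expect any real obstacle here: the argument is classical and self-contained, and the only subtle point is ensuring that the chosen index $i$ has $|x_i| > 0$ so that division is legitimate, which is guaranteed the moment one selects $i$ to be the arg-max of $|x_k|$ over a nonzero vector. No use of the other assumptions of the paper (doubly stochastic weights, smoothness, etc.) is required; the statement is a purely linear-algebraic fact.
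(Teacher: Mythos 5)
Your proof is correct and complete. The paper itself gives no proof of this lemma---it is stated as a known result with a citation to Varga's book---so there is nothing internal to compare against; your argument is the classical one (choose an eigenvector, locate the index of its largest-modulus component, read off the corresponding row of $Ax=\lambda x$, apply the triangle inequality, and divide by $|x_i|>0$), and it is exactly the standard self-contained derivation one would expect. The one subtle point you flag, namely that the arg-max index satisfies $|x_i|>0$ because the eigenvector is nonzero, is handled properly, and you are right that none of the paper's standing assumptions are needed for this purely linear-algebraic fact.
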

	
	\begin{Lemma} \label{rho_bound}
		For the weighted adjacency matrix $A$ satisfying Assumption~\ref{ass_graph}, it holds that $\rho{(A - I_n)} \leq 2$.
	\end{Lemma}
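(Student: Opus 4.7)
The plan is to apply the Gershgorin circle theorem (Lemma~\ref{lemma_G_circle}) directly to the matrix $A - I_n$. Under Assumption~\ref{ass_graph}, the doubly stochastic matrix $A$ has non-negative entries and unit row sums, so $\sum_{j=1}^{n} A_{ij} = 1$ for every $i$, which immediately yields $A_{ii} \in [0,1]$ and $\sum_{j \neq i} A_{ij} = 1 - A_{ii}$. These two elementary identities are the only structural facts about $A$ that the argument will use.

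Next, I would take an arbitrary eigenvalue $\lambda$ of the matrix $A - I_n$, whose diagonal entries are $A_{ii} - 1$ and whose off-diagonal entries coincide with those of $A$. Invoking Lemma~\ref{lemma_G_circle} on $A - I_n$ produces some index $i$ such that
\begin{equation*}
|\lambda - (A_{ii} - 1)| \;\leq\; \sum_{j \neq i} |A_{ij}| \;=\; \sum_{j \neq i} A_{ij} \;=\; 1 - A_{ii},
\end{equation*}
where the first equality uses non-negativity of the entries of $A$ and the second uses the doubly stochastic property.

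Finally, applying the triangle inequality I would write $|\lambda| \leq |A_{ii} - 1| + (1 - A_{ii}) = 2(1 - A_{ii}) \leq 2$, where the last step invokes $A_{ii} \geq 0$. Since $\lambda$ was arbitrary, taking the maximum over all eigenvalues gives $\rho(A - I_n) \leq 2$, as desired. This is a one-step application of Gershgorin, and I do not expect any genuine obstacle; the only small point to verify is that the doubly stochastic assumption is being used correctly to guarantee $A_{ii} \in [0,1]$ and to replace absolute values of off-diagonal entries by the entries themselves.
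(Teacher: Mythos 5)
Your proof is correct and follows essentially the same route as the paper: both apply the Gershgorin circle theorem (Lemma~\ref{lemma_G_circle}) to $A - I_n$ and use the row-sum and non-negativity properties of the doubly stochastic matrix $A$ to bound every eigenvalue by $2$. Your write-up is in fact slightly more careful than the paper's, which loosely identifies $\max_i|\lambda_i|$ with an eigenvalue in the Gershgorin bound, whereas you bound an arbitrary eigenvalue via the triangle inequality and then take the maximum.
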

	\begin{proof}
		From Lemma~\ref{lemma_G_circle}, the largest eigenvalue $\lambda_{\max} = \max_i |\lambda_i|$ of matrix $A - I_n$ satisfies
		$$- \sum_{j \neq i}\left|A_{i j}\right| \leq \lambda_{\max} - (A_{i i} - 1) \leq \sum_{j \neq i}\left|A_{i j}\right|,$$
		which implies that $\lambda_{\max} \leq 2$. This completes the proof.
	\end{proof}
	
	\begin{Lemma} \label{lm_basic_relation} (Basic relations of auxiliary vectors)
		Under Assumption~\ref{ass_graph}, the following equations
		\begin{align*}
			&(a)\;\bar{s}_t = \frac{1}{n} \sum\limits_{i=1}^{n} g_{t,i},\\
			&(b)\; \bar{x}_{t+1} - \bar{x}_t = - \frac{\alpha}{n} \sum\limits_{i=1}^{n} V_{t+1,i}^{-1/2} [\beta_1 {m}_{t,i} + (1-\beta_1) s_{t,i}],\\
			&(c)\; \bar z_{t+1} - \bar{z}_t = -\frac{\beta_1}{1-\beta_1} \frac{\alpha}{n} \sum\limits_{i=1}^{n} (V_{t+1,i}^{-1/2} - V_{t,i}^{-1/2}) m_{t,i} 
			\\
			&\qquad\qquad\qquad\;\; - \frac{\alpha}{n} \sum\limits_{i=1}^{n} V_{t+1,i}^{-1/2} s_{t,i}, \\
			&(d)\; \bar{z}_t - \bar{x}_t = -\frac{\beta_1}{1-\beta_1} \frac{\alpha}{n} \sum\limits_{i=1}^{n} V_{t,i}^{-1/2} m_{t,i}
		\end{align*}
		hold for $i\in \mathcal{V}$ and $t\geq 1$.
	\end{Lemma}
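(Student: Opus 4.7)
The plan is to derive each identity directly from the update rules (\ref{m_update})--(\ref{s_update}) and the definitions (\ref{z_def})--(\ref{bar_def}), exploiting the doubly stochastic property of $A$ from Assumption~\ref{ass_graph}. All four statements are algebraic identities about the network-averaged iterates, so no inequalities or bounds need to be invoked; the main task is careful bookkeeping of indices and consistent use of $\sum_{i=1}^{n} A_{ij} = \sum_{j=1}^{n} A_{ij} = 1$.

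For part~(a), I would average the GT update (\ref{s_update}) over $i$, which by double stochasticity collapses the consensus term $\frac{1}{n}\sum_i \sum_j A_{ij} s_{t,j}$ to $\bar{s}_t$, yielding the telescoping recursion $\bar{s}_{t+1} = \bar{s}_t + \bar{g}_{t+1} - \bar{g}_t$. Combining this with the initial condition $s_{1,i} = g_{1,i}$ (so $\bar{s}_1 = \bar{g}_1$) and a one-line induction on $t$ gives the claim. For part~(b), averaging the local-estimate update (\ref{x_update}) and again using double stochasticity removes the mixing term, leaving $\bar{x}_{t+1} - \bar{x}_t = -\frac{\alpha}{n}\sum_i V_{t+1,i}^{-1/2} m_{t+1,i}$; substituting the momentum recursion (\ref{m_update}) produces the desired expression.

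For part~(d), I would simply expand $\bar{z}_t - \bar{x}_t$ using the definition (\ref{z_def}) for $t \geq 2$ to obtain $\frac{\beta_1}{1-\beta_1}(\bar{x}_t - \bar{x}_{t-1})$, then apply part~(b) with $t$ replaced by $t-1$. Part~(c) follows by writing $\bar{z}_{t+1} - \bar{z}_t = \frac{1}{1-\beta_1}(\bar{x}_{t+1} - \bar{x}_t) - \frac{\beta_1}{1-\beta_1}(\bar{x}_t - \bar{x}_{t-1})$, substituting part~(b) into both differences, and regrouping the resulting sums so that the $V_{t+1,i}^{-1/2}$ and $V_{t,i}^{-1/2}$ terms associated with $m_{t,i}$ combine into the single coefficient $-\frac{\beta_1}{1-\beta_1}\frac{\alpha}{n}(V_{t+1,i}^{-1/2} - V_{t,i}^{-1/2})$ while the $s_{t,i}$ terms collapse to $-\frac{\alpha}{n} V_{t+1,i}^{-1/2} s_{t,i}$.

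There is no real obstacle beyond bookkeeping; the only subtlety is the piecewise definition of $z_{t,i}$ at $t=1$, which means identities~(c) and~(d) are only meaningful for $t \geq 2$ (for $t=1$ they reduce trivially to $\bar{z}_1 = \bar{x}_1$ together with part~(b) at $t=1$). I would flag this case briefly at the outset and then carry out parts~(a)--(d) in that order, since parts~(c) and~(d) both rely on the identity established in~(b).
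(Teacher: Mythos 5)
Your proposal is correct and follows essentially the same route as the paper's proof in Appendix~A: averaging the updates and using double stochasticity for (a) and (b), then deriving (c) and (d) from (b) via the definition of $z_{t,i}$, with the $t=1$ cases handled separately through $z_{1,i}=x_{1,i}$ and $m_{1,i}=\bm{0}_d$. No gaps.
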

	
	\begin{proof}
		
		(a) When $t=1$, the equality holds by~(\ref{bar_def}) and the fact that $s_{1,i} = g_{1,i}$. 
		
		When $t\geq 2$, from~(\ref{s_update}), (\ref{bar_def}) and the fact that $A$ is doubly-stochastic, one has
		\begin{equation}
			\begin{split}
				\bar{s}_t &= \frac{1}{n} \sum_{i=1}^{n} s_{t,i} = \frac{1}{n} \sum_{i=1}^{n} \bigg[ \sum_{j=1}^{n} A_{ij} s_{t-1,j} + g_{t,i} - g_{t-1,i} \bigg] \\
				&= \bar{s}_{t-1} + \frac{1}{n} \sum_{i=1}^{n} (g_{t,i} - g_{t-1,i})\\
				&= \dots \\
				&= \bar{s}_{0} + \frac{1}{n} \sum_{i=1}^{n} (g_{t,i} - g_{0,i})\\
				&= \frac{1}{n} \sum_{i=1}^{n} g_{t,i},
			\end{split}
		\end{equation}
		where the last equation follows from~(\ref{bar_def}) and the fact that $s_{1,i} = g_{1,i}$.
		
		(b) By (\ref{m_update}), (\ref{x_update}) and (\ref{bar_def}), one has
		\begin{equation}\label{lm1b}
			\begin{split}
				\bar{x}_{t+1} &= \bar{x}_t - \frac{\alpha}{n} \sum_{i=1}^{n} V_{t+1,i}^{-1/2} {m}_{t+1,i} \\
				& = \bar{x}_t - \frac{\alpha}{n} \sum_{i=1}^{n} V_{t+1,i}^{-1/2} [\beta_1 {m}_{t,i} + (1-\beta_1) s_{t,i}],
			\end{split}
		\end{equation}
		which completes the proof.	
		
		(c) When $t=1$, the equality holds by (\ref{m_update}), (\ref{x_update}), (\ref{z_def}) and the fact $m_{1,i} = \bm{0}_d$, $i\in \mathcal{V}$.
		
		When $t\geq 2$, from~(\ref{z_def}) and (\ref{bar_def}), one has 
		\begin{equation}\label{lm1c}
			\bar{z}_t = \frac{1}{1-\beta_1} \bar{x}_t - \frac{\beta_1}{1-\beta_1} \bar x_{t-1}.
		\end{equation} 
		By~(\ref{lm1b}) and (\ref{lm1c}), we can obtain
		\begin{equation}\label{lm1c2}
			\begin{split}
				\bar{z}_{t+1} - \bar{z}_t &= \frac{1}{1-\beta_1} (\bar{x}_{t+1} - \bar{x}_t) - \frac{\beta_1}{1-\beta_1} (\bar x_{t} - \bar x_{t-1})\\
				& = - \frac{1}{1-\beta_1} \frac{\alpha}{n} \sum_{i=1}^{n} V_{t+1,i}^{-1/2} [\beta_1 {m}_{t,i} + (1-\beta_1) s_{t,i} ]\\
				& \quad + \frac{\beta_1}{1-\beta_1} \frac{\alpha}{n} \sum_{i=1}^{n} V_{t,i}^{-1/2} {m}_{t,i}.
			\end{split}
		\end{equation}
		The proof is completed by rearranging the terms in~(\ref{lm1c2}).
		
		(d) When $t=1$, the equality holds since $z_{1,i} = x_{1,i}$ and $m_{1,i} = \bm{0}_d$, $i\in \mathcal{V}$. 
		
		When $t\geq 2$, it follows from~(\ref{lm1b}) and (\ref{lm1c}) that
		\begin{equation}\label{zx}
			\begin{split}
				\bar{z}_t - \bar{x}_t &= \frac{\beta_1}{1-\beta_1} (\bar{x}_t - \bar{x}_{t-1}) \\
				&= -\frac{\beta_1}{1-\beta_1} \frac{\alpha}{n} \sum_{i=1}^{n} V_{t,i}^{-1/2} m_{t,i} .
			\end{split}
		\end{equation}
		This completes the proof.
	\end{proof}
	
	\section*{Appendix B: Proof of Lemma~\ref{lm_smooth}}
	\begin{proof}
		It follows from Assumption~\ref{ass_smooth} that
		\begin{equation}\label{smooth_func}
			\begin{split}
				f(\bar{z}_{t+1}) & \leq f(\bar{z}_{t}) - \langle \nabla f(\bar{z}_{t}) , \bar{z}_{t} - \bar{z}_{t+1}\rangle + \frac{L}{2} \|\bar{z}_{t} - \bar{z}_{t+1}\|^2\\
				& \leq f(\bar{z}_{t}) - \langle \nabla f(\bar{z}_{t}) - \nabla f(\bar{x}_{t}) , \bar{z}_{t}  - \bar{z}_{t+1}\rangle \\
				& \quad- \langle \nabla f(\bar{x}_{t}) , \bar{z}_{t} - \bar{z}_{t+1}\rangle + \frac{L}{2} \|\bar{z}_{t} - \bar{z}_{t+1}\|^2\\
				& \leq f(\bar{z}_{t}) + \frac{1}{2} \|\nabla f(\bar{z}_{t}) - \nabla f(\bar{x}_{t})\|^2  + \frac{1}{2}\|\bar{z}_{t} - \bar{z}_{t+1}\|^2 \\
				& \quad- \langle \nabla f(\bar{x}_{t}) , \bar{z}_{t} - \bar{z}_{t+1}\rangle + \frac{L}{2} \|\bar{z}_{t} - \bar{z}_{t+1}\|^2\\
				&\leq f(\bar{z}_{t}) + \phi_1(t) + \phi_2(t) + \phi_3(t),
			\end{split}
		\end{equation}
		where \begin{equation}
			\begin{split}
				\phi_1(t) &= \frac{L^2}{2} \| \bar{z}_{t} - \bar{x}_{t} \|^2, \\
				\phi_2(t) &= - \langle \nabla f(\bar{x}_{t}) , \bar{z}_{t} - \bar{z}_{t+1}\rangle, \\
				\phi_3(t) &= \frac{L+1}{2} \|\bar{z}_{t} - \bar{z}_{t+1}\|^2.
			\end{split}
		\end{equation}
		
		From (\ref{v3_update}) and (\ref{v2_update}), it is easy to show that $v_{\rm min}\leq [{V}_{t,i}]_{jj} \leq v_{\rm max}$ and $v_{\rm min} \leq [\bar{V}_t]_{jj} \leq v_{\rm max}$ hold for $j = 1,\dots,d$. 
		Then, for the term $\phi_1(t)$, it follows from Lemma~\ref{lm_basic_relation}(d) that 
		\begin{equation}\label{phi1}
			\begin{split}
				\phi_1(t)  &\leq  \Big(\frac{\alpha \beta_1 L}{1-\beta_1}\Big)^2 \frac{v_{\rm min}^{-1}}{2n}\sum_{i=1}^{n} \|m_{t,i}\|^2\\
				& = \Big(\frac{\alpha \beta_1 L}{1-\beta_1}\Big)^2 \frac{v_{\rm min}^{-1}}{2n} \|\bm{m}_{t}\|^2 \\
				&\leq \Big(\frac{\alpha \beta_1 L}{1-\beta_1}\Big)^2 \frac{v_{\rm min}^{-1}}{n} (\|\bm{m}_{t} - \tilde{\bm{m}}_{t}\|^2 + \|\tilde{\bm{m}}_{t}\|^2).
			\end{split}
		\end{equation}
		
		From Lemma~\ref{lm_basic_relation}(c) and the fact that $\bar{V}_{t+1}$ is a diagonal matrix, one has
		\begin{equation}\label{phi2}
			\begin{split}
				\phi_2(t) &= - \Big\langle \nabla f(\bar{x}_t) , \alpha \bar{V}_{t+1}^{-1/2} \nabla f(\bar{x}_t) \Big\rangle  + \Big\langle \alpha \bar{V}_{t+1}^{-1/4} \nabla f(\bar{x}_t) ,\\
				&\qquad \; \frac{1}{\alpha}\bar{V}_{t+1}^{1/4}(\bar{z}_{t+1} - \bar{z}_t) + \bar{V}_{t+1}^{-1/4} \nabla f(\bar{x}_t) \Big\rangle\\
				& \leq - \alpha v_{\rm max} ^{-1/2} \|\nabla f(\bar{x}_t)\|^2 + \frac{\alpha^2 v_{\rm min}^{-1/2}}{2} \|\nabla f(\bar{x}_t)\|^2 \\
				& \quad + \frac{ v_{\rm max}^{1/2}}{2} \Big\|\frac{\bar{z}_{t} - \bar{z}_{t+1}}{\alpha} - \bar{V}_{t+1} ^{-1/2} \nabla f(\bar{x}_t)\Big\|^2 \\
				& = - \alpha\Big( v_{\rm max} ^{-1/2} - \frac{\alpha v_{\rm min}^{-1/2}}{2}\Big)\|\nabla f(\bar{x}_t)\|^2 \\
				& \quad+ \frac{ v_{\rm max}^{1/2}}{2} \Big\|\frac{\beta_1}{1-\beta_1} \frac{1}{n} \sum_{i=1}^{n} ( V_{t+1,i}^{-1/2} -  V_{t,i}^{-1/2}) m_{t,i} \\
				& \qquad \qquad + \frac{1}{n} \sum_{i=1}^{n}  V_{t+1,i}^{-1/2} s_{t,i}  - \bar{V}_{t+1} ^{-1/2} \nabla f(\bar{x}_t)\Big\|^2\\
				& \leq - \alpha\Big( v_{\rm max} ^{-1/2} - \frac{\alpha v_{\rm min}^{-1/2}}{2}\Big) \|\nabla f(\bar{x}_t)\|^2 \\
				& \quad + 2 v_{\rm max}^{1/2}\big( \phi_{21}(t) + \phi_{22}(t) + \phi_{23}(t) + \phi_{24}(t)\big),
			\end{split}
		\end{equation}
		where \begin{equation} \label{phi1_4}
			\begin{split}
				\phi_{21}(t) &= \Big\|\frac{\beta_1}{1-\beta_1} \frac{1}{n} \sum_{i=1}^{n} ( V_{t+1,i}^{-1/2} -  V_{t,i}^{-1/2}) m_{t,i}\Big\|^2, \\
				\phi_{22}(t) &= \Big\|\frac{1}{n} \sum_{i=1}^{n}  (V_{t+1,i}^{-1/2} - \bar V_{t+1}^{-1/2}) s_{t,i} \Big\|^2, \\
				\phi_{23}(t) &= \Big\|\frac{1}{n} \sum_{i=1}^{n} \bar V_{t+1}^{-1/2} (s_{t,i} - \bar{s}_t) \Big\|^2, \\
				\phi_{24}(t) &= \Big\| \bar V_{t+1}^{-1/2} (\bar{s}_t - \nabla f(\bar{x}_t)) \Big\|^2.
			\end{split}
		\end{equation}
		
		Next, we focus on the upper bounds of the terms in~(\ref{phi1_4}). 
		For the term $\phi_{21}(t)$, since $V_{t+1,i}$ is a diagonal matrix with $v_{\rm min}\leq [{V}_{t,i}]_{jj}$, one has
		\begin{equation} \label{phi21}
			\begin{split}
				&\phi_{21}(t)\leq \Big(\frac{\beta_1}{1-\beta_1}\Big)^2 \frac{1}{n}\sum_{i=1}^{n} \Big\| V_{t+1,i}^{-1/2} - V_{t,i}^{-1/2} \Big\|^2 \|m_{t,i}\|^2 \\
				&\leq \Big(\frac{\beta_1}{1-\beta_1}\Big)^2 \frac{2}{n}\sum_{i=1}^{n} \Big(\big\| V_{t+1,i}^{-1/2} \big\|^2 + \big\|V_{t,i}^{-1/2} \big\|^2\Big) \|m_{t,i}\|^2 \\			
				& \leq \frac{4\beta_1^2 v_{\rm min}^{-1}}{n(1-\beta_1)^2} \|\bm{m}_t\|^2 \\
				&\leq \frac{8\beta_1^2 v_{\rm min}^{-1}}{n(1-\beta_1)^2} (\|{\bm{m}}_t - {\bm{\tilde m}}_t\|^2 + \|{\bm{\tilde m}}_t\|^2).
			\end{split}
		\end{equation}
		
		For the term $\phi_{22}(t)$, we have
		\begin{equation}\label{phi22_temp}
			\begin{split}
				\phi_{22}(t) & \leq \frac{1}{n}\sum_{i=1}^{n} \Big\| V_{t+1,i}^{-1/2} - \bar V_{t+1}^{-1/2} \Big\|^2 \|s_{t,i}\|^2 \\
				& \leq \frac{2}{n}\sum_{i=1}^{n} \Big\| V_{t+1,i}^{-1/2} - \bar V_{t+1}^{-1/2} \Big\|^2 (\|s_{t,i} - \bar{s}_t\|^2 + \|\bar{s}_t\|^2) \\
				&\leq \frac{8 v_{\rm min}^{-1}}{n} \|{\bm{s}}_t - {\bm{\bar s}}_t\|^2 + \frac{2}{n}\sum_{i=1}^{n} \Big\| V_{t+1,i}^{-1/2} - \bar V_{t+1}^{-1/2} \Big\|^2  \|\bar{s}_t\|^2.
			\end{split}
		\end{equation}
		Since $V_{t,i}$ and ${\bar V}_{t}$ are diagonal matrices, the following inequality
		\begin{equation}\label{sqr_V}
			\begin{split}
				&\big| [V_{t,i}]_{jj}^{-1/2} - [{\bar V}_{t}]_{jj}^{-1/2} \big| = \Bigg| \frac{[{\bar V}_{t}]_{jj}^{1/2} - {[V_{t,i}]_{jj}^{1/2}}}{{[{\bar V}_{t}]_{jj}^{1/2} [V_{t,i}]_{jj}^{1/2}}} \Bigg| \\
				& = \Bigg| \frac{([{\bar V}_{t}]_{jj}^{1/2} - {[V_{t,i}]_{jj}^{1/2}})({[{\bar V}_{t}]_{jj}^{1/2}} + {[V_{t,i}]_{jj}}^{1/2})}  {{[{\bar V}_{t}]_{jj}^{1/2} [V_{t,i}]_{jj}^{1/2}} ({[{\bar V}_{t}]_{jj}^{1/2}} + {[V_{t,i}]_{jj}}^{1/2})} \Bigg| \\
				& \leq \frac{v_{\rm min}^{-3/2}}{2} | [{\bar V}_{t}]_{jj} - {[V_{t,i}]_{jj}}|
			\end{split}
		\end{equation}
		holds for $j = 1,\dots,d$. 
		Then, from Lemma~\ref{lm_basic_relation}(a) and Assumption~\ref{assumption_sto_grad}(b), the second term on the right-hand side of (\ref{phi22_temp}) is bounded as follows:
		\begin{equation}\label{phi22_2}
			\begin{split}
				&\frac{2}{n}\sum_{i=1}^{n} \Big\| V_{t+1,i}^{-1/2} - \bar{V}_{t+1}^{-1/2} \Big\|^2  \|\bar{s}_t\|^2\\
				&\leq \frac{2G^2}{n}\sum_{i=1}^{n} \big\| V_{t+1,i}^{-1/2} - \bar{V}_{t+1}^{-1/2} \big\|_F^2 \\
				& \leq \frac{G^2 v_{\rm min}^{-3}}{2n} \sum_{i=1}^{n} \big\| V_{t+1,i} - \bar{V}_{t+1} \big\|_F^2  \\
				& = \frac{G^2 v_{\rm min}^{-3}}{2n}\| \bm{v}_{t+1} - \tilde{\bm{v}}_{t+1}\|^2 .
			\end{split}
		\end{equation}
		According to (\ref{phi22_temp}) and (\ref{phi22_2}), we have
		\begin{equation}\label{phi22}
			\begin{split}
				\phi_{22}(t) \leq \frac{8 v_{\rm min}^{-1}}{n} \|{\bm{s}}_t - {\bm{\bar s}}_t\|^2 + \frac{G^2 v_{\rm min}^{-3}}{2n}\| \bm{v}_{t+1} - \tilde{\bm{v}}_{t+1}\|^2 .
			\end{split}
		\end{equation}
		
		For the term $\phi_{23}(t)$, it holds that
		\begin{equation}\label{phi23}
			\begin{split}
				\phi_{23}(t) &=	\Big\|\frac{1}{n} \sum_{i=1}^{n} \bar V_{t+1}^{-1/2} (s_{t,i} - \bar{s}_t) \Big\|^2 \\
				&\leq \frac{v_{\rm min}^{-1}}{n} \sum_{i=1}^{n}\| s_{t,i} - \bar{s}_t \|^2  = \frac{v_{\rm min}^{-1}}{n} \| \bm{s}_{t} - \tilde{\bm{s}}_{t} \|^2.
			\end{split}
		\end{equation}
		
		Moreover, it follows from~(\ref{bar_def}) and Assumption~\ref{ass_smooth} that
		\begin{equation}\label{phi24}
			\begin{split}
				\phi_{24}(t) &=  \Big\| \bar V_{t+1}^{-1/2} (\bar{s}_t - \nabla f(\bar{x}_t)) \Big\|^2 \\
				&\leq v_{\rm min}^{-1} \bigg\| \frac{1}{n}\sum_{i=1}^{n} g_{t,i} - \nabla f(\bar{x}_t) \bigg\|^2 \\
				&= v_{\rm min}^{-1} \bigg\| \frac{1}{n}\sum_{i=1}^{n} (\nabla f_i({x}_{t,i}) + \eta_{t,i} - \nabla f_i(\bar{x}_t)) \bigg\|^2 \\
				& \leq \frac{2 v_{\rm min}^{-1} L^2}{n}\sum_{i=1}^{n} \|  {x}_{t,i} - \bar{x}_t \|^2 + \frac{2 v_{\rm min}^{-1} }{n}\sum_{i=1}^{n} \| \eta_{t,i}\|^2\\
				&= \frac{2v_{\rm min}^{-1} L^2}{n} \| \bm{x}_t - \tilde{\bm{x}}_{t} \|^2 + \frac{2 v_{\rm min}^{-1} }{n}\sum_{i=1}^{n} \| \eta_{t,i}\|^2.
			\end{split}
		\end{equation}
		
		Considering the term $\phi_3(t)$, one has
		\begin{equation}\label{phi3}
			\begin{split}
				\phi_3(t) & \leq \alpha^2 (L+1)  \big\| \bar{V}_{t+1} ^{-1/2} \nabla f(\bar{x}_t) \big\|^2\\
				&\quad + \alpha^2 (L+1)  \Big\|\frac{\bar{z}_{t} - \bar{z}_{t+1}}{\alpha} - \bar{V}_{t+1} ^{-1/2} \nabla f(\bar{x}_t) \Big\|^2 \\
				& \leq \alpha^2 (L+1)  v_{\rm min}^{-1} \|\nabla f(\bar{x}_t)\|^2 \\
				& \quad + 4 \alpha^2 (L \!+\! 1)  \big( \phi_{21}(t) \!+\! \phi_{22}(t) \!+\! \phi_{23}(t) \!+\! \phi_{24}(t)\big)
			\end{split}
		\end{equation}
		with $\phi_{21}(t),\phi_{22}(t),\phi_{23}(t),\phi_{24}(t)$ given by~(\ref{phi1_4}). 
		
		Finally, we complete the proof by (\ref{smooth_func})-(\ref{phi3}).

	\end{proof}
	
	\section*{Appendix C: Proofs of Lemmas~\ref{lm_mcon}-\ref{lm_s_con}}
	
	\begin{proof}[I: Proof of Lemma~\ref{lm_mcon}] 
%
		From the update rule~(\ref{m_update}), for any $t \geq 1$ one has 
		\begin{equation}\label{m_con}
			\begin{split}
				&\|\bm{m}_{t+1} - \tilde{\bm{m}}_{t+1}\| \\
				&= \big\| \beta_1 \bm{m}_{t} + (1-\beta_1) \bm{s}_{t} - (\beta_1 \tilde{\bm{m}}_{t} + (1-\beta_1) \tilde{\bm{s}}_t) \big\| \\
				&\leq \beta_1 \| \bm{m}_{t} - \tilde{\bm{m}}_{t} \| + (1-\beta_1) \| \bm{s}_{t} - \tilde{\bm{s}}_t\|.
			\end{split}
		\end{equation}
		By~(\ref{m_con}) and Lemma~\ref{lm_sqr}, it holds for $\theta = \frac{1 - \beta_1^2}{2 \beta_1^2} > 0$ that 
		\begin{equation} \label{ineq_lm2}
			\begin{split}
				& \mathbb{E}[\|\bm{m}_{t+1} - \tilde{\bm{m}}_{t+1}\|^2] \leq \beta_1^2 (1+\theta) \mathbb{E} [\| \bm{m}_{t} - \tilde{\bm{m}}_{t} \|^2]  \\
				& \quad + (1-\beta_1)^2 (1+1/\theta) \mathbb{E} [\| \bm{s}_{t} - \tilde{\bm{s}}_t\|^2]\\
				&= \frac{1 + \beta_1^2}{2}  \mathbb{E} [\| \bm{m}_{t} - \tilde{\bm{m}}_{t} \|^2] \\
				& \quad + \frac{1+ \beta_1^2}{1 - \beta_1^2} (1 - \beta_1)^2  \mathbb{E} [\| \bm{s}_{t} - \tilde{\bm{s}}_t\|^2].
			\end{split}
		\end{equation}
			
		For $T \geq 1$, it follows from~(\ref{ineq_lm2}) and Lemma~\ref{ratio_sum} that
		\begin{equation}
			\begin{split}
				& \sum_{t=1}^{T}\mathbb{E}[\|\bm{m}_{t} - \tilde{\bm{m}}_{t}\|^2] \leq \frac{2}{1 - \beta_1^2}  \|\bm{m}_{1} - \tilde{\bm{m}}_{1}\|^2\\
				& \qquad + \frac{2 (1+ \beta_1^2)}{(1 - \beta_1^2)^2} (1 - \beta_1)^2  \sum_{t=1}^{T} \mathbb{E} [\| \bm{s}_{t} - \tilde{\bm{s}}_t\|^2] \\
				& \leq 4 \sum_{t=1}^{T} \mathbb{E} [\| \bm{s}_{t} - \tilde{\bm{s}}_t\|^2],
			\end{split}
		\end{equation}
		where the second inequality holds since $m_{1,i} = \bm{0}_d$ and $\frac{(1 - \beta_1)^2}{(1 - \beta_1^2)^2} \leq 1$ as $\beta_1 \in (0, 1)$. This completes the proof.
	\end{proof}
	
	\begin{proof}[II: Proof of Lemma~\ref{lm_x}]
		From (\ref{x_update}), (\ref{bar_def}) and the fact that $[V_{t+1}]_{jj} \geq v_{\rm min}$, we have
		\begin{equation}\label{x}
			\begin{split}
				&\|\bm{x}_{t+1} - \tilde{\bm{x}}_{t+1}\| = \bigg\| (A \otimes I_d)\bm{x}_{t} - \alpha V_{t+1}^{-1/2} \bm{m}_{t+1} \\
				& \qquad \;  - \bigg[\Big( \frac{\bm{1}_n\bm{1}_n'}{n}  \otimes I_d \Big) {\bm{x}}_t - \Big(\frac{\bm{1}_n\bm{1}_n'}{n}\otimes I_d \Big) \alpha V_{t+1}^{-1/2} \bm{m}_{t+1} \bigg] \bigg\| \\
				& \leq \bigg\| \Big[\Big(A \!-\! \frac{\bm{1}_n\bm{1}_n'}{n}  \Big) \!\otimes\! I_d \Big](\bm{x}_{t} \!-\! \tilde{\bm{x}}_t) \bigg\| \!+\! \alpha v_{\rm min}^{-1/2} \| \bm{m}_{t+1} \!-\! \tilde{\bm{m}}_{t+1} \|\\
				& \leq \rho_A \| \bm{x}_{t} - \tilde{\bm{x}}_t\| + \alpha v_{\rm min}^{-1/2}  (1-\beta_1) \| \bm{s}_{t} - \tilde{\bm{s}}_{t} \|  \\
				& \quad + \alpha v_{\rm min}^{-1/2} \beta_1 \| \bm{m}_{t} - \tilde{\bm{m}}_{t}\|,
			\end{split}
		\end{equation}
		where the first inequality holds since $\big[\big(A - \frac{\bm{1}_n\bm{1}_n'}{n}  \big)\otimes I_d \big] \tilde{\bm{x}}_t = \bm{0}_{nd}$, and the last one follows from (\ref{m_con}).
		
		Then, from (\ref{x}), we can use Lemma~\ref{lm_sqr} with $\theta = \frac{1 - \rho_A^2}{2 \rho_A^2}$ and take the total expectation to obtain
		\begin{equation}
			\begin{split}
				&\mathbb{E} [\|\bm{x}_{t+1} - \tilde{\bm{x}}_{t+1}\|^2] \leq \frac{1 + \rho_A^2}{2} \mathbb{E} [\| \bm{x}_{t} - \tilde{\bm{x}}_t\|^2] \\
				& \qquad \quad + 2 \alpha^2 v_{\rm min}^{-1} (1-\beta_1)^2 \frac{1 + \rho_A^2}{1 - \rho_A^2} \mathbb{E} [\| \bm{s}_{t} - \tilde{\bm{s}}_{t} \|^2]\\
				&\qquad \quad + 2  \alpha^2 v_{\rm min}^{-1}  \beta_1^2 \frac{1 + \rho_A^2}{1 - \rho_A^2} \mathbb{E} [\| \bm{m}_{t} - \tilde{\bm{m}}_{t}\|^2].
			\end{split}
		\end{equation}
		Furthermore, it holds from Lemma~\ref{ratio_sum} that
		\begin{equation} \label{sum_x1}
			\begin{split}
				&\sum_{t=1}^{T}\mathbb{E} [\|\bm{x}_{t} - \tilde{\bm{x}}_{t}\|^2] \leq \frac{2}{1 - \rho_A^2} \Delta_1 \\
				& \qquad \quad + 4 \alpha^2 v_{\rm min}^{-1} (1-\beta_1)^2 \frac{1 + \rho_A^2}{(1 - \rho_A^2)^2} \sum_{t=1}^{T}\mathbb{E} [\| \bm{s}_{t} - \tilde{\bm{s}}_{t} \|^2]\\
				&\qquad \quad + 4  \alpha^2 v_{\rm min}^{-1}  \beta_1^2 \frac{1 + \rho_A^2}{(1 - \rho_A^2)^2} \sum_{t=1}^{T}\mathbb{E} [\| \bm{m}_{t} - \tilde{\bm{m}}_{t}\|^2].
			\end{split}
		\end{equation}
		
		To proceed, we apply Lemma~\ref{lm_mcon} to (\ref{sum_x1}) and obtain
		\begin{equation}
			\begin{split}
				&\sum_{t=1}^{T}\mathbb{E} [\|\bm{x}_{t} - \tilde{\bm{x}}_{t}\|^2] \\
				& \leq \frac{  \alpha^2 v_{\rm min}^{-1} (8 + 32  \beta_1^2 ) }{(1 - \rho_A^2)^2}  \sum_{t=1}^{T} \mathbb{E} [\| \bm{s}_{t} \!-\! \tilde{\bm{s}}_t\|^2] + \frac{2}{1 - \rho_A^2} \Delta_1\\
				& \leq \frac{  40\alpha^2 v_{\rm min}^{-1}}{(1 - \rho_A^2)^2}  \sum_{t=1}^{T} \mathbb{E} [\| \bm{s}_{t} \!-\! \tilde{\bm{s}}_t\|^2] + \frac{2}{1 - \rho_A^2} \Delta_1,
			\end{split}
		\end{equation}
	where the second inequality holds by the relaxation $\beta_1 < 1$.
		This completes the proof.	
	\end{proof}

	\begin{proof}[III: Proof of Lemma~\ref{lm_mbar}] 
		It follows from (\ref{m_update}) and (\ref{bar_def}) that
		\begin{equation}\label{m_bar}
			\begin{split}
				&\|\tilde{\bm{m}}_{t+1} \| = \| \beta_1 \tilde{\bm{m}}_{t} + (1-\beta_1) \tilde{\bm{s}}_{t}\| \\
				&\leq \beta_1 \| \tilde{\bm{m}}_{t}\| + (1-\beta_1)  \sqrt{n} \| \bar{s}_t\| \\
				& = \beta_1 \| \tilde{\bm{m}}_{t}\| + (1-\beta_1) \sqrt{n}  \| \bar {s}_{t} - \nabla f(\bar{x}_t) + \nabla f(\bar{x}_t) \| \\
				& = \beta_1 \| \tilde{\bm{m}}_{t}\| + (1-\beta_1) \sqrt{n} \times \\
				& \quad \bigg\| \frac{1}{n} \sum_{i=1}^{n} \big[\nabla f_i({x}_{t,i}) + \eta_{t,i} - \nabla f_i(\bar{x}_t)\big] + \nabla f(\bar{x}_t) \bigg\| \\
				& \leq \beta_1 \| \tilde{\bm{m}}_{t}\| + (1-\beta_1) \sqrt{n} \bigg[  \frac{1}{n} \sum_{i=1}^{n} \| \nabla f_i({x}_{t,i}) - \nabla f_i(\bar{x}_t)\| \\
				& \qquad \qquad \qquad \qquad \qquad \quad \;\;  +  \bigg\|\frac{1}{n} \sum_{i=1}^{n} \eta_{t,i} \bigg\| + \|\nabla f(\bar{x}_t) \| \bigg] \\
				& \leq 	 \beta_1 \| \tilde{\bm{m}}_{t}\| + L(1-\beta_1)\|\bm{x}_t - \tilde{\bm{x}}_t\| + (1-\beta_1) \sqrt{n}\| \nabla f(\bar{x}_t) \|  \\
				& \quad+  (1-\beta_1)\sqrt{n} \bigg\|\frac{1}{n} \sum_{i=1}^{n} \eta_{t,i} \bigg\|,
			\end{split}
		\end{equation}
		where the first equality holds by Lemma~\ref{lm_basic_relation}(b), and the last inequality follows from Assumption~\ref{ass_smooth}.
		
		From Lemma~\ref{lm_sqr}, let $a = \beta_1 \| \tilde{\bm{m}}_{t}\|$ and $b = L(1-\beta_1) \|\bm{x}_t - \tilde{\bm{x}}_t\| + (1-\beta_1) \sqrt{n}\| \nabla f(\bar{x}_t) \| +  (1-\beta_1)\sqrt{n} \big\|\frac{1}{n} \sum_{i=1}^{n} \eta_{t,i} \big\|$ and $\theta = \frac{1 - \beta_1^2}{2 \beta_1^2} > 0$. Then, (\ref{m_bar}) becomes 
		\begin{equation}
			\begin{split}
				&\mathbb {E} \big[\|\tilde{\bm{m}}_{t+1} \|^2\big] \!\leq\!  \frac{1 + \beta_1^2}{2} \mathbb {E} \big[\| \tilde{\bm{m}}_{t}\|^2\big] + 3(1 \!-\! \beta_1)^2 \frac{1+ \beta_1^2}{1 - \beta_1^2}  \times  \\
				& \quad \;\; \bigg(  L^2 \mathbb {E}\big[\|\bm{x}_t - \tilde{\bm{x}}_t\|^2\big] + {n} \mathbb {E}\big[\| \nabla f(\bar{x}_t) \|^2\big]  + \sum_{i=1}^{n} \mathbb {E} \big[\| \eta_{t,i}\|^2\big] \bigg).
			\end{split}
		\end{equation}
		Finally, by using Lemma~\ref{ratio_sum}, one has for $T \geq 1$ that 
		\begin{equation}
			\begin{split}
				&\sum_{t=1}^{T}\mathbb {E} \big[\|\tilde{\bm{m}}_{t} \|^2\big]  \leq  \frac{2}{1 - \beta_1^2} \|\|\tilde{\bm{m}}_{1}\|^2 + 3 (1 \!-\! \beta_1)^2 \frac{1+ \beta_1^2}{(1 - \beta_1^2)^2} \times  \\
				& \quad \bigg(  L^2 \sum_{t=1}^{T} \mathbb {E}\big[\|\bm{x}_t \!-\! \tilde{\bm{x}}_t\|^2\big] \!+ \!{n} \mathbb {E}\big[\| \nabla f(\bar{x}_t) \|^2\big]  \!+\! \sum_{i=1}^{n} \mathbb {E} \big[\| \eta_{t,i}\|^2\big] \bigg) \\
				& \leq 6 L^2 \sum_{t=1}^{T} \mathbb {E}\big[\|\bm{x}_t - \tilde{\bm{x}}_t\|^2\big]  + 6 {n} \sum_{t=1}^{T} \mathbb {E}\big[\| \nabla f(\bar{x}_t) \|^2\big]  + 6 n \sigma^2 T \\
				& \leq \frac{ 240 L^2 \alpha^2 v_{\rm min}^{-1} }{(1 - \rho_A^2)^2}  \sum_{t=1}^{T} \mathbb{E} [\| \bm{s}_{t} \!-\! \tilde{\bm{s}}_t\|^2]  +  6 {n} \sum_{t=1}^{T} \mathbb {E}\big[\| \nabla f(\bar{x}_t) \|^2\big]  \\
				& \quad+ 6 n \sigma^2 T  + \frac{12 L^2}{1 - \rho_A^2} \Delta_1,
			\end{split}
		\end{equation}	
	where the second inequality holds from the fact that $m_{1,i} = \bm{0}_d$ for all $i$.
	This completes the proof.
	\end{proof}
	
	\begin{proof}[IV: Proof of Lemma~\ref{lm_v_con}] 
		It follows from (\ref{s_con1}) that
		\begin{equation}
			\begin{split}
				&\bm{s}_t = \tilde{\bm{s}}_t + (\bm{s}_t - \tilde{\bm{s}}_t) \\
				&= \bm{1}_n \!\otimes\! \bigg(\frac{1}{n}\sum_{i=1}^{n}g_{t,i}\bigg) + \bigg[\bigg(A - \frac{\bm{1}_n \bm{1}_n'}{n}\bigg) \!\otimes\! I_d \bigg] (\bm{s}_{t-1} - \tilde{\bm{s}}_{t-1}) \\
				&\quad + \bigg[\Big( I_n - \frac{\bm{1}_n\bm{1}_n'}{n} \Big) \otimes I_d \bigg](\bm{g}_{t+1} - \bm{g}_t) \\
				& = \dots \\
				& = \bm{1}_n \!\otimes\! \bigg(\frac{1}{n}\sum_{i=1}^{n}g_{t,i}\bigg) \!+\! \sum_{\tau = 1}^{t} \bigg[\bigg(A \!-\! \frac{\bm{1}_n \bm{1}_n'}{n}\bigg)^\tau \!\otimes\! I_d \bigg] (\bm{g}_{t+1} \!-\! \bm{g}_t) \\
				&\quad + \bigg[\bigg( I_n - \frac{\bm{1}_n\bm{1}_n'}{n} \bigg) \otimes I_d \bigg](\bm{g}_{t+1} - \bm{g}_t).
			\end{split}
		\end{equation}
		Then, according to Assumption~\ref{assumption_sto_grad}(b), it holds for all $l = 1, \dots, nd$ that
		\begin{equation} \label{s_uni_bound}
			\begin{split}
				&\big| [\bm{s}_t]_l \big| \leq \big| [\tilde{\bm{s}}_t]_l \big| + \big| [\bm{s}_t - \tilde{\bm{s}}_t]_l \big| \\
				&\leq \bigg| \bigg[\bm{1}_n\otimes \bigg(\frac{1}{n}\sum_{i=1}^{n}g_{t,i} \bigg) \bigg]_l \bigg| + \sum_{\tau=0}^{t} \rho_A^\tau \big| [ \bm{g}_{t+1} - \bm{g}_t]_l \big|\\
				& \leq G + \frac{2G}{1-\rho_A} = \frac{G(3-\rho_A)}{1-\rho_A}.
			\end{split}
		\end{equation}
		
		By (\ref{v_update}), (\ref{v3_update}), (\ref{bar_def}) and the non-extension property of the clipping operator, one has
		\begin{equation} \label{v_error}
			\begin{split}
				&\| \bm{v}_{t+1} - \tilde{\bm{v}}_{t+1}\| \\
				& \leq \bigg\| \beta_2\bm{v}_{t} + (1 - \beta_2) \bm{s}_t \odot \bm{s}_t  - \beta_2 \tilde{\bm{v}}_{t} \\
				& \quad \;\;- (1 - \beta_2) \bm{1}_n \otimes \bigg(\frac{1}{n}\sum_{j=1}^{n} s_{t,i} \odot {s}_{t,i} \bigg)  \bigg\| \\
				& \leq \beta_2 \| \bm{v}_{t} - \tilde{\bm{v}}_{t}\| + (1 - \beta_2) \bigg\| \bigg[\bigg(I_n - \frac{\bm{1}_n \bm{1}_n'}{n}\bigg) \otimes I_d\bigg] \big(\bm{s}_t \odot \bm{s}_t \big) \bigg\| \\
				& \leq \beta_2 \| \bm{v}_{t} \!-\! \tilde{\bm{v}}_{t}\| \!+\! \frac{G (3 \!-\! \rho_A) (1 \!-\! \beta_2)}{1-\rho_A} \bigg\| \bigg[\bigg(I_n \!- \frac{\bm{1}_n \bm{1}_n'}{n}\bigg) \otimes I_d\bigg] \bm{s}_t  \bigg\| \\
				& \leq \beta_2\| \bm{v}_{t} - \tilde{\bm{v}}_{t}\| + \frac{G (3-\rho_A) (1-\beta_2)}{1-\rho_A} \| \bm{s}_t - \tilde{\bm{s}}_t \|,
			\end{split}
		\end{equation}
		where the third inequality holds by (\ref{s_uni_bound}) and the property of Hadamard product. 
		
		Then, by using Lemma~\ref{lm_sqr} with $\theta = \frac{1-\beta_2^2}{2 \beta_2^2}$ and taking the total expectation, one has
		\begin{equation} \label{v_t_relation}
			\begin{split}
				&\mathbb{E} [\| \bm{v}_{t+1} - \tilde{\bm{v}}_{t+1}\|^2]  \leq \frac{1 + \beta_2^2}{2} \mathbb{E} [\| \bm{v}_{t} - \tilde{\bm{v}}_{t}\|^2] \\
				&\quad + \Big(\frac{G (3-\rho_A) (1-\beta_2)}{(1-\rho_A)}\Big)^2 \frac{(1 + \beta_2^2)}{1 - \beta_2^2} \mathbb{E} [\| \bm{s}_t - \tilde{\bm{s}}_t \|^2]\\
				& \leq\frac{1 + \beta_2^2}{2} \mathbb{E} [\| \bm{v}_{t} - \tilde{\bm{v}}_{t}\|^2] + \frac{18G^2 (1 - \beta_2)^2 }{(1-\rho_A)^2 (1 - \beta_2^2)} \mathbb{E} [\| \bm{s}_t - \tilde{\bm{s}}_t \|^2].
			\end{split}
		\end{equation}
		
		Furthermore, we apply Lemma~\ref{ratio_sum} to (\ref{v_t_relation}) and obtain
		\begin{equation} 
			\begin{split}
				&\sum_{t=1}^{T} \mathbb{E} [\| \bm{v}_{t} - \tilde{\bm{v}}_{t}\|^2]  \leq \frac{2}{1 - \beta_2^2} \Delta_2 \\
				&\quad + \frac{36G^2 (1 - \beta_2)^2 }{(1-\rho_A)^2 (1 - \beta_2^2)^2} \sum_{t=1}^{T}  \mathbb{E} [\| \bm{s}_t - \tilde{\bm{s}}_t \|^2] \\
				& \leq \frac{2}{1 - \beta_2^2} \Delta_2  + \frac{36G^2 }{(1-\rho_A)^2} \sum_{t=1}^{T}  \mathbb{E} [\| \bm{s}_t - \tilde{\bm{s}}_t \|^2].
			\end{split}
		\end{equation}
		This completes the proof.
	\end{proof}
	
	\begin{proof}[V: Proof of Lemma~\ref{lm_s_con}] 
		It follows from (\ref{s_update}) and (\ref{bar_def}) that
		\begin{equation} \label{s_con1}
			\begin{split}
				&\|\bm{s}_{t+1} - \tilde{\bm{s}}_{t+1}\| \\
				&= \bigg\| (A \!\otimes\! I_d ) \bm{s}_t \!+\! \bm{g}_{t+1} \!-\! \bm{g}_t
				-  \Big(\frac{\bm{1}_n\bm{1}_n'}{n} \!\otimes\! I_d \Big)(\bm{s}_t \!+\! \bm{g}_{t+1} \!-\! \bm{g}_t) \bigg\| \\
				&= \bigg\| \bigg[ \Big(A \!-\! \frac{\bm{1}_n\bm{1}_n'}{n} \Big) \!\otimes\! I_d \bigg] \bm{s}_t \!+\! \bigg[ \Big(I_n \!-\! \frac{\bm{1}_n\bm{1}_n'}{n} \Big) \!\otimes\! I_d \bigg](\bm{g}_{t+1} - \bm{g}_t) \bigg\| \\
				& \leq \bigg\| \Big[ \Big(A - \frac{\bm{1}_n\bm{1}_n'}{n} \Big) \otimes I_d \Big] (\bm{s}_t - \tilde{\bm{s}}_t) \bigg\| \\
				& \quad + \bigg\| \Big[ \Big(I_n - \frac{\bm{1}_n\bm{1}_n'}{n} \Big) \otimes I_d \Big] (\bm{g}_{t+1} - \bm{g}_t) \bigg\| \\
				& \leq \rho_A \| \bm{s}_t - \tilde{\bm{s}}_t\| + \|\bm{g}_{t+1} - \bm{g}_t \| \\
				& \leq \rho_A \| \bm{s}_t - \tilde{\bm{s}}_t\| + L\|\bm{x}_{t+1} - \bm{x}_t \| +  \| \boldsymbol{\eta}_{t+1} \| + \| \boldsymbol{\eta}_{t} \|\\
				&= \rho_A \| \bm{s}_t - \tilde{\bm{s}}_t\| + L \big\| (A \otimes I_d) \bm{x}_{t} - \alpha V_{t+1}^{-1/2} \bm{m}_{t+1}	 - \bm{x}_t \big\| \\
				& \quad + \| \boldsymbol{\eta}_{t+1} \| + \|\boldsymbol{\eta}_{t} \| \\
				&\leq \rho_A \| \bm{s}_t - \tilde{\bm{s}}_t\| + L\|[(A-I_n) \otimes I_d](\bm{x}_{t} - \tilde{\bm{x}}_t) \| \\
				& \quad + \alpha v_{\rm min}^{-1/2} \| \bm{m}_{t+1} \| + \| \boldsymbol{\eta}_{t+1} \| + \|\boldsymbol{\eta}_{t} \|\\
				&\leq \rho_A \| \bm{s}_t - \tilde{\bm{s}}_t\| + L \rho(A-I_n) \|\bm{x}_{t} - \tilde{\bm{x}}_t \| \\
				& \quad + \alpha v_{\rm min}^{-1/2} (\| \bm{m}_{t+1} \!-\! \tilde{\bm{m}}_{t+1} \| \!+\! \| \tilde{\bm{m}}_{t+1} \|) \!+\! \| \boldsymbol{\eta}_{t+1} \| \!+\! \|\boldsymbol{\eta}_{t} \| \\
				&\leq \rho_A \| \bm{s}_t - \tilde{\bm{s}}_t\| + 2L \|\bm{x}_{t} - \tilde{\bm{x}}_t \| \\
				& \quad + \alpha v_{\rm min}^{-1/2} (\| \bm{m}_{t+1} \!-\! \tilde{\bm{m}}_{t+1} \| \!+\! \| \tilde{\bm{m}}_{t+1} \|) \!+\! \| \boldsymbol{\eta}_{t+1} \| \!+\! \|\boldsymbol{\eta}_{t} \|,
			\end{split}
		\end{equation}
		where the first, the second, the fourth and the last inequality holds by the facts $\big[\big(A - \frac{\bm{1}_n\bm{1}_n'}{n} \big) \otimes I_d \big] \tilde{\bm s}_t = \bm{0}_{nd}$, $\rho \big(I_n - \frac{\bm{1}_n\bm{1}_n'}{n} \big) = 1$, $\big[\big(A - I_{n} \big) \otimes I_d \big] \tilde{\bm x}_t = \bm{0}_{nd}$ and	 $\rho (A - I_n) \leq 2$ (by Lemma~\ref{rho_bound}), respectively.
		
		From (\ref{m_con}), (\ref{m_bar}), (\ref{s_con1}) and Lemma~\ref{lm_sqr}, it holds for $\theta_4>0$ that
		\begin{equation}\label{s_con2}
			\begin{split}
				&\|\bm{s}_{t+1} - \tilde{\bm{s}}_{t+1}\|^2 \\
				&\leq \bigg[ \rho_A \| \bm{s}_t \!-\! \tilde{\bm{s}}_t\| \!+ \! 2L \|\bm{x}_{t} \!-\! \tilde{\bm{x}}_t \| \!+ \!\| \boldsymbol{\eta}_{t+1} \| \!+\! \|\boldsymbol{\eta}_{t}\| \\
				& \quad  +\! \alpha v_{\rm min}^{-1/2} \bigg[  \beta_1\| \bm{m}_{t} \!-\! \tilde{\bm{m}}_{t} \| \!+\! (1 \!-\! \beta_1) \| \bm{s}_{t} \!-\! \tilde{\bm{s}}_{t} \| \!+\! \beta_1\| \tilde{\bm{m}}_{t} \| \\
				& \quad  +\! (1 \!-\! \beta_1) \bigg( \! L\| \bm{x}_{t} \!-\! \tilde{\bm{x}}_{t} \| \!+\! \sqrt{n} \| \nabla f(\bar{x}_t)\| \!+\!  \sqrt{n} \bigg\|\frac{1}{n} \sum_{i=1}^{n}\eta_{t,i} \bigg\|  \bigg) \! \bigg] \bigg]^2\\
				&\leq \rho_A^2 (1+\theta_4)  \| \bm{s}_t - \tilde{\bm{s}}_t\|^2 +9(1+1/\theta_4) \times \\
				& \quad \bigg[ 4L^2 \|\bm{x}_{t} - \tilde{\bm{x}}_t \|^2 + \sum_{i=1}^{n}(\| {\eta}_{t+1,i} \|^2 + \| {\eta}_{t,i} \|^2)  \\
				&\quad + \! \alpha^2 v_{\rm min}^{-1} \bigg[\beta_1^2\| \bm{m}_{t} \!-\! \tilde{\bm{m}}_{t} \|^2 \!+\! (1 \!-\! \beta_1)^2 \| \bm{s}_{t} \!-\! \tilde{\bm{s}}_{t} \|^2  \!+\! \beta_1^2\| \tilde{\bm{m}}_{t} \|^2\\
				& \quad  + (1 \!-\! \beta_1)^2 \bigg( {L^2} \| \bm{x}_{t} \!-\! \tilde{\bm{x}}_{t} \|^2 \!+\! n \| \nabla f(\bar{x}_t)\|^2 \!+\! \sum_{i=1}^{n} \|\eta_{t,i}\|^2 \bigg) \bigg] \bigg].
			\end{split}
		\end{equation}
		Since $\beta_1 \in (0,1)$, by setting $\theta = \frac{1 - \rho_A^2}{2 \rho_A^2} $ and taking the total expectation of~(\ref{s_con2}), one has
		\begin{equation}\label{s_t_relation}
			\begin{split}
				&\mathbb{E}\big[\|\bm{s}_{t+1} - \tilde{\bm{s}}_{t+1}\|^2\big] \\
				&\leq \Big( \frac{1 + \rho_A^2}{2} \!+\! 9\alpha^2 v_{\rm min}^{-1}(1-\beta_1)^2 \frac{1 + \rho_A^2}{1 - \rho_A^2} \Big) \mathbb{E} \big[\| \bm{s}_t \!-\! \tilde{\bm{s}}_t\|^2\big] \\
				&\quad + 9 L^2\frac{1 + \rho_A^2}{1 - \rho_A^2} \Big( 4 + \frac{\alpha^2 v_{\rm min}^{-1}}{n}(1 - \beta_1)^2  \Big) \times \\
				& \quad\quad \mathbb{E}\big[\|\bm{x}_{t} - \tilde{\bm{x}}_t \|^2\big] + 9 \alpha^2 v_{\rm min}^{-1} \frac{1 + \rho_A^2}{1 - \rho_A^2}   \Big( \beta_1^2 \mathbb{E} \big[\| \bm{m}_{t} \!-\! \tilde{\bm{m}}_{t} \|^2\big]   \\
				& \quad+ \beta_1^2 \mathbb{E}\big[\| \tilde{\bm{m}}_{t} \|^2\big]+ n (1 \!-\! \beta_1)^2  \mathbb{E} \big[\| \nabla f(\bar{x}_t)\|^2 \big] \Big) \\
				& \quad+ \frac{18}{1 - \rho_A^2} ( 3 + \alpha^2 v_{\rm min}^{-1} (1-\beta_1)^2)n \sigma^2 \\
				&\leq \Big( \frac{3 + \rho_A^2}{4}  \Big) \mathbb{E} \big[\| \bm{s}_t \!-\! \tilde{\bm{s}}_t\|^2\big] + \frac{90 L^2}{1 - \rho_A^2} \mathbb{E}\big[\|\bm{x}_{t} - \tilde{\bm{x}}_t \|^2\big] \\
				& \quad +  \frac{18\alpha^2 v_{\rm min}^{-1} \beta_1^2}{1 - \rho_A^2}   \Big( \mathbb{E} \big[\| \bm{m}_{t} \!-\! \tilde{\bm{m}}_{t} \|^2\big] + \mathbb{E}\big[\| \tilde{\bm{m}}_{t} \|^2\big] \Big) \\
				& \quad+ \frac{18\alpha^2 v_{\rm min}^{-1} }{1 - \rho_A^2}  n  \mathbb{E} \big[\| \nabla f(\bar{x}_t)\|^2 \big]  + \frac{54}{1 - \rho_A^2} n \sigma^2 ,
			\end{split}
		\end{equation}
		where the second inequality holds since $\alpha^2 \leq \frac{v_{\min} (1-\rho_A^2)^2}{72}$ and $\rho_A \in [0,1)$.
		
		By applying Lemma~\ref{ratio_sum} to~(\ref{s_t_relation}), we obtain
		\begin{equation}\label{s_sum}
			\begin{split}
				&\sum_{t=1}^{T} \mathbb{E}\big[\|\bm{s}_{t} - \tilde{\bm{s}}_{t}\|^2\big] \\
				&\leq \Big( \frac{4}{1 - \rho_A^2}  \Big) \Delta_3 + \frac{360 L^2}{(1 - \rho_A^2)^2} \sum_{t=1}^{T} \mathbb{E}\big[\|\bm{x}_{t} - \tilde{\bm{x}}_t \|^2\big] \\
				& \quad +  \frac{72 \alpha^2 v_{\rm min}^{-1} \beta_1^2}{(1 - \rho_A^2)^2}   \Big( \sum_{t=1}^{T} \mathbb{E} \big[\| \bm{m}_{t} \!-\! \tilde{\bm{m}}_{t} \|^2\big] + \sum_{t=1}^{T} \mathbb{E}\big[\| \tilde{\bm{m}}_{t} \|^2\big] \Big) \\
				& \quad+ \frac{72 \alpha^2 v_{\rm min}^{-1} }{(1 - \rho_A^2)^2}  n  \sum_{t=1}^{T}\mathbb{E} \big[\| \nabla f(\bar{x}_t)\|^2 \big]  + \frac{216}{(1 - \rho_A^2)^2} n T \sigma^2  \\
				&\leq \Big( \frac{4}{1 - \rho_A^2}  \Big) \Delta_3 + \frac{360 L^2}{(1 - \rho_A^2)^2} \sum_{t=1}^{T} \mathbb{E}\big[\|\bm{x}_{t} - \tilde{\bm{x}}_t \|^2\big] \\
				& \quad +  \frac{72 \alpha^2 v_{\rm min}^{-1} \beta_1^2}{(1 - \rho_A^2)^2}   \Big( \sum_{t=1}^{T} \mathbb{E} \big[\| \bm{m}_{t} \!-\! \tilde{\bm{m}}_{t} \|^2\big] + \sum_{t=1}^{T} \mathbb{E}\big[\| \tilde{\bm{m}}_{t} \|^2\big] \Big) \\
				& \quad+ \frac{72 \alpha^2 v_{\rm min}^{-1} }{(1 - \rho_A^2)^2}  n  \sum_{t=1}^{T}\mathbb{E} \big[\| \nabla f(\bar{x}_t)\|^2 \big]  + \frac{216}{(1 - \rho_A^2)^2} n T \sigma^2  \\
				&\leq N_1 \alpha^2\sum_{t=1}^{T} \mathbb{E}\big[\|\bm{s}_{t} - \tilde{\bm{s}}_{t}\|^2\big] +  N_2 n  \sum_{t=1}^{T}\mathbb{E} \big[\| \nabla f(\bar{x}_t)\|^2 \big]  \\
				& \quad + N_3 nT \sigma^2  + N_4 \Delta
			\end{split}
		\end{equation}		
		with $N_1$-$N_4$ given by (\ref{def_N1234}), where the second inequality holds by applying Lemmas~\ref{lm_mcon}-\ref{lm_v_con}. If we set $\alpha^2 \leq \frac{1}{2 N_1}$, then it is easy to prove that
		\begin{equation}
			\begin{split}
				& (1 - N_1 \alpha^2)\sum_{t=1}^{T} \mathbb{E}\big[\|\bm{s}_{t} - \tilde{\bm{s}}_{t}\|^2\big] \leq \frac{1}{2}\sum_{t=1}^{T} \mathbb{E}\big[\|\bm{s}_{t} - \tilde{\bm{s}}_{t}\|^2\big] \\
				& \leq n N_2  \sum_{t=1}^{T}\mathbb{E} \big[\| \nabla f(\bar{x}_t)\|^2 \big]   + nT N_3  \sigma^2  + N_4 \Delta,
			\end{split}
		\end{equation}
		which completes the proof.	
	\end{proof}
	
	\section*{Appendix D: Proof of Corollary~\ref{Col_1}}
	
	\begin{proof}
		In Theorem~\ref{The_1}, we have already bounded the first part as follows:
		\begin{equation} \label{inequality_col_1}
			\begin{split}
				& \frac{1}{T}\sum_{t=1}^{T} \mathbb{E} [\|\nabla f(\bar{x}_t)\|^2] \\
				& \quad \quad \leq \frac{f(\bar{x}_{1}) - f^* + N_4' \Delta}{T \alpha (v_{\max}^{-1/2} - \alpha N_2')} + \frac{ N_3'  \sigma^2}{\alpha (v_{\max}^{-1/2} - \alpha N_2')}.
			\end{split}
		\end{equation}
	
	Then, from (\ref{inequality_col_1}) and Lemma~\ref{lm_s_con}, one has 
	\begin{equation} \label{inequality_col_2}
		\begin{split}
			&\frac{1}{n T} \sum_{t=1}^{T} \mathbb{E}\big[\|\bm{s}_{t} - \tilde{\bm{s}}_{t}\|^2\big]  \leq \frac{ 2 N_2 (f(\bar{x}_{1}) - f^*) }{T \alpha (v_{\max}^{-1/2} - \alpha N_2')} \\
			& + \left(\frac{2 N_2 N_4'}{\alpha (v_{\max}^{-1/2} - \alpha N_2')} + \frac{2 N_4}{n} \right) \frac{\Delta}{T} \\
			& + \left(\frac{ 2 N_2 N_3' }{\alpha (v_{\max}^{-1/2} - \alpha N_2')} + 2 N_3 \right) \sigma^2 .
		\end{split}
	\end{equation}

To proceed, we substitute (\ref{inequality_col_2}) into (\ref{inequality_lm_x}) and get 
\begin{equation}
	\begin{split}
		&\frac{1}{nT} \sum_{t=1}^{T} \sum_{i=1}^{n} \mathbb{E} \left[\|x_{t,i} - \bar{x}_t \|^2\right] = \frac{1}{nT} \sum_{t=1}^{T}\mathbb{E} [\|\bm{x}_{t} - \tilde{\bm{x}}_{t}\|^2] \\
		& \leq \frac{ 40 \alpha^2 v_{\rm min}^{-1} }{(1 - \rho_A^2)^2} \cdot \frac{1}{n T} \sum_{t=1}^{T} \mathbb{E} [\| \bm{s}_{t} - \tilde{\bm{s}}_t\|^2] + \frac{2}{n (1 - \rho_A^2)} \cdot \frac{\Delta_1}{T} \\
		& \leq \frac{ 80 N_2  \alpha^2 v_{\rm min}^{-1} }{\alpha (1 - \rho_A^2)^2 (v_{\max}^{-1/2} - \alpha N_2')} \cdot \frac{f(\bar{x}_{1}) - f^*}{T} \\
		& \quad +  \left[\frac{ 80 \alpha^2 v_{\rm min}^{-1} }{(1 - \rho_A^2)^2} \! \left(\frac{N_2 N_4'}{\alpha (v_{\max}^{-1/2} \!-\! \alpha N_2')} \!+\! \frac{N_4}{n} \right) \!+\!  \frac{2}{n (1 \!-\! \rho_A^2)}\right] \cdot \frac{\Delta}{T} \\
		& \quad + \frac{ 40 \alpha^2 v_{\rm min}^{-1} }{(1 - \rho_A^2)^2} \cdot  \left(\frac{ 2 N_2 N_3' }{\alpha (v_{\max}^{-1/2} - \alpha N_2')} + 2 N_3 \right) \sigma^2,
	\end{split}
\end{equation}
which completes the proof.
	\end{proof}

\end{document}